\documentclass[11pt]{amsart}
\usepackage[utf8]{inputenc}
\usepackage[T1]{fontenc}
\usepackage{bm,amsmath,amsthm,amssymb}
\usepackage{geometry}
\usepackage{graphicx}
\usepackage{tikz}
\usepackage{amsfonts}
\usepackage{array}
\usepackage{enumerate}
\usepackage{float}
\usepackage{lscape}
\geometry{a4paper}
\usepackage[english]{babel}
\usepackage[all]{xy}
\usepackage{authblk}
\usepackage{hyperref}

\theoremstyle{plain}
\newtheorem{fact}{Fact}[section]
\newtheorem{theo}[fact]{Theorem}
\newtheorem{lem}[fact]{Lemma}
\newtheorem{defi}[fact]{Definition}

\newtheorem{prop}[fact]{Proposition}
\newtheorem{rmk}[fact]{Remark}
\newtheorem{coro}[fact]{Corollary}
\newcommand{\cor}[1]{\textcolor{black}{#1}}

\title{A bijection between $m$-cluster-tilting objects and $(m+2)$-angulations in $m$-cluster categories}
\author{Lucie JACQUET-MALO\\
lucie.jacquet.malo@u-picardie.fr\\
LAMFA\\
33 rue Saint Leu\\
80 000 AMIENS\\
FRANCE}

\setcounter{tocdepth}{1}

\begin{document}

\maketitle

\begin{center}
\large{\sc{Lucie Jacquet-Malo}} \\
lucie.jacquet.malo@u-picardie.fr\\
\end{center}

\begin{abstract}
In this article, we study the geometric realizations of $m$-cluster categories of Dynkin types A, D, $\tilde{A}$ and $\tilde{D}$. We show, in those four cases, that there is a bijection between $(m+2)$-angulations and isoclasses of basic $m$-cluster tilting objects. Under these bijections, flips of $(m+2)$-angulations correspond to mutations of $m$-cluster tilting objects. Our strategy consists in showing that certain Iyama-Yoshino reductions of the $m$-cluster categories under consideration can be described in terms of cutting along an arc the corresponding geometric realizations. This allows to infer results from small cases to the general ones.
\end{abstract}

\textbf{
Keywords: Cluster algebras, $m$-cluster categories, tame quivers, $\tilde{D_n}$.}

\textbf{MSC classification: Primary: 18E30 ; Secondary: 13F60, 05C62
}


\section*{Introduction}

In the early 2000's, Fomin and Zelevinsky in \cite{FZ} invented cluster algebras in order to give a combinatorial framework to the study of canonical bases. Later, it has been proved that cluster algebras have many connections with Calabu-Yau algebras, integrable systems, Poisson geometry and quiver representations. In order to categorify this notion, Buan, Marsh, Reineke, Reiten and Todorov in \cite{BMRRT} (and Caldero, Chapoton, Schiffler in case $A_n$ in \cite{CCS}) invented cluster categories. This allowed to categorify mutations in cluster algebras by using tilting theory. For a gentle introduction to cluster categories, see the article of Keller, \cite{Kel01}.

\vspace{20pt}

The cluster category is defined as follows. Let $K$ be a field, $Q$ be an acyclic quiver, and $\mathcal{D}^b(KQ)$ the bounded derived category of $Q$. The cluster category is the orbit of $\mathcal{D}^b(KQ)$ under the functor $\tau^{-1}[1]$, where $\tau$ is the Auslander-Reiten translation and $[1]$ is the shift. Keller showed in \cite{Kel03} that the cluster category is triangulated, with shift functor $[1]$. In fact, he proved that, for nice enough endofunctors, the orbit category of a derived category was triangulated. This led to the higher cluster category: the category \[\mathcal{D}^b(KQ)/\tau^{-1}[m].\]
Thomas in \cite{Tho} defined them properly, and showed that they played the same role as cluster categories, but with respect to $m$-clusters (defined by Fomin and Reading in \cite{FR}. Later, Wraalsen and Zhou/Zhu in \cite{W} and \cite{ZZ} showed that many properties of cluster categories could be generalized to higher cluster categories. For example, they showed that any $m$-rigid object $X$ having $n-1$ nonisomorphic indecomposable summands has exactly $m+1$ complements (it means nonisomorphic indecomposable objects $Y$ such that $X \bigoplus Y$ is an $m$-cluster-tilting object).

For some specific classes of quivers, it is sometimes possible to construct geometric realizations of (higher) cluster categories. This was done by Caldero, Chapoton and Schiffler in case $A_n$ for cluster categories. Schiffler in \cite{Sch} found it for case $D_n$ for cluster categories, and Baur and Marsh generalized both results to higher cluster categories in \cite{BM01} and \cite{BM02}. In these cases, the Auslander-Reiten quiver of the higher cluster category can be realized as a connected component of a category geometrically built.

Unfortunately, this cannot happen in Euclidean cases, which are representation-infinite. It means that the Auslander-Reiten quiver of the higher cluster category is infinite, and composed of three main parts, which repeat many times. Torkildsen in \cite{Tor} treated case $\tilde{A}$, and Baur and Torkildsen in \cite{BauTor} gave a complete geometric realization of case $\tilde{A}$.

This paper is aimed to give further results on all realization of higher cluster categories. Indeed, we are going to show that there is a bijection between well-defined $(m+2)$-angulations, and the $m$-cluster-tilting objects in the higher cluster category, using the following strategy:

First, we recall the combinatorial framework for types $A$, $D$, $\tilde{A}$ and $\tilde{D}$. We define the underlying surface, the concepts of admissible arcs, $m$-diagonal, and $(m+2)$-angulation associated. Second, with each $(m+2)$-angulation in the geometric realization, we define the associated colored quiver. We also state the compatibility Theorem \ref{theo:corresp} between the flip of an $(m+2)$-angulation and the mutation of the associated colored quiver for all types.

In Theorem \ref{th:ra}, we state that, for all types, there is a bijective correspondence between the $m$-rigid indecomposable objects of the $m$-cluster category, and the $m$-diagonals in the underlying geometric model.

Next, in Theorem \ref{th:cross}, we show that the vanishing of the first $m$ positive extension groups of a pair of $m$-rigid indecomposable objects implies that the corresponding $m$-diagonals in the geometric realization do not cross. In order to show this result, we state that if we cut along a special arc $\alpha$ (called $m$-ears, which means, roughly speaking, when cut-off, produces a piece of the original polygon $P$, that has the same shape as $P$, but whose $(m+2)$-angulations have one less $m$-diagonal than those of $P$), then the Iyama-Yoshino reduction with respect to $\alpha$ is equivalent to the $m$-cluster categoryof the same type $A$, $D$, $\tilde{A}$, or $\tilde{D}$, but with one less vertex.

This Theorem \ref{th:ra} serves to associate with each $m$-cluster-tilting object an $(m+2)$-angulation in $P$, and we then show in Theorem \ref{th:comp} that this assignement makes the mutations of $m$-cluster-tilting objects and the flips of $(m+2)$-angulations correspond mutually to each other.

Finally, we establish the converse of Theorem \ref{th:ra}, namely, that if two $m$-diagonals do nos cross, then the first $m$-positive extension groups between their corresponding $m$-rigid objects vanish. This permits us to show the final Theorem \ref{th:bij}.

This paper is organized as follows.

In section $1$, we recall some important notions on higher cluster categories, mutation of $m$-rigid objects and colored quivers.

Section $2$ is a survey of all the geometric descriptions of types $A$, $D$, $\tilde{A}$ and $\tilde{D}$, with a slight modification on type $D$. We also see the bijection between $m$-rigid objects and $m$-diagonals.

In section $3$, we show in each type of quiver that, if two arcs cross each other, then there exists a nonzero extension between the associated $m$-rigid objects.

Finally in section $4$ we show the compatibility between mutations of $m$-cluster-tilting objects and flips of $(m+2)$-angulations. \cor{The results in section $3$ allows us to define a function which sends an $(m+2)$-angulation to the correspondent $m$-cluster-tilting object. We also show that this function is in fact a bijection.}

\subsection*{Acknowledgements}
This article is part of my PhD thesis under the supervision of Yann Palu and Alexander Zimmermann. I would like to thank Yann Palu warmly for introducing me to the subject of cluster categories, and for his patience and kindness. I also would like to thank warmly the anonymous referee for his kind comments and advises.

\section{Preliminaires}

Notations:

Throughout this paper, we fix a field K and an acyclic finite quiver $Q$.
In the remaining of the paper, $n$ and $m$ are integers, where $n$ is the number of vertices of $Q$, $n \geq 4$. \cor{We note that all the results apply to the cases $A_1$, $A_2$, and $A_3$ using exactly the same arguments.}

If $A$ is an object in a category ${\mathcal{C}}$, $A^\perp$ is the class of all objects $X$ such that \[{\mathrm{Ext}}_\mathcal{C}^i(A,X)=0 \text{ for all } i \in \{1,\cdots,m \}.\]

The category ${\mathrm{mod}}(KQ)$ is the category of finitely generated right modules over the path algebra $KQ$. The letter $\tau$ stands for the Auslander-Reiten translation. We write $[1]$ for the shift functor in the bounded derived category $\mathcal{D}^b(KQ)$. For any further information about representation theory of associative algebras, see the book written by Assem, Simson and Skowronski, \cite{ASS}.

\subsection{Higher cluster categories}
In 2006, in order to categorify the notions of clusters and mutations in cluster algebras, Buan, Marsh, Reineke, Reiten and Todorov in \cite{BMRRT} defined the cluster category of an acyclic quiver in the following way:

If $Q$ is an acyclic quiver, let ${\mathcal{D}}^b(KQ)$ be the bounded derived category of the category ${\mathrm{mod}}~KQ$. The category ${\mathcal{C}}_Q$ is the orbit category (in the sense of Keller in \cite{Kel03}) of the derived category under the functor $\tau^{-1}[1]$.

Cluster categories give a categorification of clusters in a cluster algebra in terms of cluster-tilting objects. To be precise, the cluster variables of the cluster algebra are in $1-1$ correspondence with the rigid indecomposable objects in ${\mathcal{C}}_Q$ \cor{(recalling that an object is rigid if it has no self-extensions)}, and the clusters are in $1-1$ correspondence with the isoclasses of basic cluster-tilting objects in ${\mathcal{C}}_Q$.

It is known from Buan, Marsh, Reineke, Reiten and Todorov in \cite{BMRRT} that ${\mathcal{C}}_Q$ is Krull-Schmidt. Since $\tau$ and $[1]$ become isomorphic in $\mathcal{C}_Q$, we have that ${\mathcal{C}}_Q$ is $2$-Calabi-Yau, and Keller in \cite{Kel03} has shown that it was a triangulated category.

For a positive integer $m$, following Thomas in \cite{Tho} and Keller in \cite{Kel03}, we can also define the higher cluster category
\[ {\mathcal{C}}^m_Q={\mathcal{D}}^b(KQ)/\tau^{-1}[m] \] where $[m]$ is the shift $[1]$ repeated $m$ times.

Again, the higher cluster category is Krull-Schmidt, $(m+1)$-Calabi-Yau, and triangulated.

\begin{defi}
Let $T$ be an object in the category ${\mathcal{C}}^m_Q$. Then $T$ is said to be $m$-rigid if, for any $i \in \{1,\cdots,m \}$, we have
\[ {\mathrm{Ext}}_\mathcal{C}^i(T,T)=0. \]
\end{defi}

\begin{defi}\cite{KR}
\cor{Let $T$ be an object in the category ${\mathcal{C}}^m_Q$. Then $T$ is $m$-cluster-tilting if, for any object $X$ of the category ${\mathcal{C}}^m_Q$, we have the following equivalence: \[ X \text{ is in } {\mathrm{add}}~T \iff ~\forall i \in \{ 1,\cdots,m \},~{\mathrm{Ext}}^i_{{\mathcal{C}_Q^{m}}}(T,X)=0, \] where $\mathrm{add}~T$ is the smallest additive subcategory of ${\mathcal{C}}^m_Q$ containing the object $T$.}
\end{defi}

\cor{Under the same notations, it is known from Zhu in \cite{Z}, that $T$ is an $m$-cluster-tilting object if and only if $T$ has $n$ indecomposable direct summands (up to isomorphism) and is $m$-rigid. So, let \[T=\bigoplus_{i=1}^n T_i\] be an $m$-cluster-tilting object, where each $T_i$ is indecomposable for any $i \in \{1,\cdots,n\}$. Let us define an almost complete $m$-rigid object. Let $k \in \{1,\cdots,n\}$. Recall that $T_k$ is an indecomposable summand of $T$. Then the object ${\overline{T}}=T/T_k$ is called an almost $m$-cluster-tilting object.}

It is shown by Wraalsen on the one hand in \cite{W} and by Zhou, Zhu on the other hand in \cite{ZZ}, that there exist, up to isomorphism, $m+1$ objects, denoted $T_k^{(c)}$ (for $c \in \{0,\cdots,m\}$), and called complements, such that \[\overline{T} \oplus T_k^{(c)}\] is an $m$-cluster-tilting object.

\begin{defi}
Let $\mathcal{T}$ be a triangulated category, with shift functor $[1]$, and $\mathcal{X}, \mathcal{Y}, \mathcal{D}$ be subcategories of $\mathcal{T}$. Let $\mu^{-1}(\mathcal{X},\mathcal{D})$ be the full subcategory made of objects $T \in \mathcal{T}$ such that there exists a triangle \[\xymatrix{X \ar^a[r] & D \ar[r] & T \ar[r] & X[1]},\] with $X \in \mathcal{X}$ and $a$ a left-$\mathcal{D}$-approximation. Dually, we introduce $\mu(\mathcal{Y},\mathcal{D})$ as the full subcategory made of objects $T \in \mathcal{T}$ such that there exists a triangle \[\xymatrix{T \ar[r] & D \ar^b[r] & Y \ar[r] & X[1]},\] with $Y \in \mathcal{Y}$ and $b$ a right-$\mathcal{D}$-approximation.

We say that $(\mathcal{X},\mathcal{Y})$ form a $\mathcal{D}$-mutation pair if we have the following inclusions:
\[\mathcal{D} \subseteq \mathcal{Y} \subseteq \mu^{-1}(\mathcal{X},\mathcal{D}) \text{ and } \mathcal{D} \subseteq \mathcal{X} \subseteq \mu(\mathcal{Y},\mathcal{D}).\] 
\end{defi}

About these complements, Iyama and Yoshino in \cite{IY} showed the following theorem:

\begin{theo}\label{th:iy}
Let $\mathcal{T}$ be a triangulated category with shift functor $[1]$, and let $\mathcal{Z}$ and $\mathcal{D}$ be two subcategories of $\mathcal{T}$ such that $\mathcal{D} \subseteq \mathcal{Z}$. Suppose that $\mathcal{Z}$ is extension closed, and $(\mathcal{Z},\mathcal{Z})$ form a $\mathcal{D}$-mutation pair (see \cite{IY} for a precise definition). Let $\mathcal{U}=\mathcal{Z}/\mathcal{D}$. Then there exists an equivalence $\langle 1 \rangle:\mathcal{U} \to \mathcal{U}$, and an object $D_x \in \mathcal{D}$ such that 
\[X \to D_X \to X\langle 1 \rangle \to X [1]\] is a triangle.
Then $\mathcal{U}$ forms a triangulated category, with shift functor $\langle 1 \rangle$.
\end{theo}

\begin{coro}
For any $k \leq n$, there are $m+1$ exchange triangles (for $c \in \{0,\cdots,m\}$):
\[
\xymatrix{
T_k^{(c)} \ar^{f_k^{(c)}}[r] & B_k^{(c)} \ar^{g_k^{(c+1)}}[r] & T_k^{(c+1)} \ar^{h_k^{(c+1)}}[r] & T_k^{(c)}[1] }
\]
Where the objects $B_k^{(c)}$ are in ${\mathrm{add}}~\overline{T}$, the morphisms $f_k^{(c)}$ (respectively $g_k^{(c+1)}$) are minimal left (respectively right) ${\mathrm{add}}~\overline{T}$-approximations, hence, not split monomorphisms nor split epimorphisms.

\cor{The new object $\overline{T} \oplus T_k^{(1)}$ is the mutation $T'=\mu_k(T)$ of $T$.}
\end{coro}

\subsection{Mutation of colored quivers}\label{sec:colq}
\cor{In this section, we define a colored quiver in the sense of Buan and Thomas. We also define the mutation of colored quivers and show how they help the understanding of Iyama-Yoshino mutation of $m$-cluster-tilting objects.}

To focus, we let $T$ be an $m$-cluster-tilting object in $\mathcal{C}_Q^m$, and we let $T'$ be an $m$-cluster-tilting object which is obtained by mutation of $T$ (in the sense of Iyama and Yoshino at Theorem \ref{th:iy}). Unfortunately, if $Q_T$ is the Gabriel quiver associated with $T$, there does not exist any quiver mutation $\mu$ such that $Q_{T'}=\mu(Q_T)$. Then, to remedy this lack, Buan and Thomas in \cite{BT} built a new quiver from $T$, which is called as the colored quiver associated with $T$.

\begin{defi}\cite{BT}
Given two positive integers $n$ and $m$, a colored quiver consists of the data of a quiver $Q=(Q_0,Q_1,s,t)$ with $n$ vertices, and of a function $\mathfrak{c}:Q_1 \to \{0,1,\cdots,m \}$ which associates with an arrow its color.
Let $q_{ij}^{(c)}$ be the number of arrows from $i$ to $j$ of color $c$. If there is an arrow from $i$ to $j$ of color $c$, then we write $i \xrightarrow{(c)} j$.
\end{defi}

\cor{For any $i$ and $j$ two vertices of the quiver $Q$, as Buan and Thomas in \cite{BT}, we only consider colored quivers that satisfy the following conditions:}

\begin{enumerate}
\item $q_{ii}^{(c)}=0$ for any $c \in \{0,\cdots,m\}$.
\item monochromaticity: if $q_{ij}^{(c)} \neq 0$ then $q_{ij}^{(c')}=0$ for all $c' \neq c \in \{0,\cdots,m\}$.
\item symmetry: $q_{ij}^{(c)}=q_{ji}^{(m-c)}$ for any $c \in \{0,\cdots,m\}$.
\end{enumerate}

\cor{Then, from now, each time we build a quiver, we ensure that it satisfies these three conditions.}

The operation we are about to define is an involution called the mutation of a colored quiver at a vertex.

\cor{From now and all throughout the paper, we consider colors modulo $m+1$. For instance, if we add one to the color $m$, then it becomes $0$.}

\begin{defi}\cite{BT}
Let $Q$ be a colored quiver, and let $k$ be a vertex of $Q$. We define the new quiver $\mu_k(Q)$ with the same vertices as the ones of $Q$, and the new number of arrows $\tilde{q}_{ij}^{(c)}$ given by:
\[ 
\tilde{q}_{ij}^{(c)} = \left\{
    \begin{array}{l}
        {q}_{ij}^{(c+1)} \mbox{ if } j = k \\
        {q}_{ij}^{(c-1)} \mbox{ if } i = k \\
        {\mathrm{max}} \{0,q_{ij}^{(c)}-\sum_{t \neq c}{q}_{ij}^{(t)} + ({q}_{ik}^{(c)}-{q}_{ik}^{(c-1)}){q}_{kj}^{(0)} + {q}_{ik}^{(m)}({q}_{kj}^{(c)}-{q}_{kj}^{(c+1)}) \} \mbox{ otherwise.}
    \end{array}
    \right.
\]
\end{defi}

The authors Buan and Thomas showed in \cite{BT} that mutating a colored quiver in this way is equivalent to the following procedure ($i,j,k$ being three vertices of $Q$):

\begin{enumerate}
\item For any $\xymatrix@1{i\ar[r]^{(c)} & k\ar[r]^{(0)} & j}$, if $i \neq j$ and $c$ is an integer in $\{0,\cdots,m\}$, then draw an arrow $\xymatrix@1{i\ar[r]^{(c)} & j}$ and an arrow $\xymatrix@1{j\ar[r]^{(m-c)} & i}$.
\item If condition 2 of monochromaticity in the restriction of colored quivers is not satisfied anymore from one vertex $i$ to one vertex $j$, then remove the same number of arrows of each color, in order to restore the condition.
\item For any arrow $\xymatrix@1{i \ar[r]^{(c)} & k}$, add $1$ to the color $c$, and for any arrow $\xymatrix@1{k \ar[r]^{(c)} & j}$, subtract $1$ to the color $c$.
\end{enumerate}

\cor{Let $T=\bigoplus_{i=1}^nT_i$ be an $m$-cluster-tilting object, and let $k \in \{1,\cdots,n\}$}. We recall that there are exchange triangles (for $c \in \{0,\cdots,m\}$):
\begin{equation}
\xymatrix{
T_k^{(c)} \ar^{f_k^{(c)}}[r] & B_k^{(c)} \ar^{g_k^{(c+1)}}[r] & T_k^{(c+1)} \ar^{h_k^{(c+1)}}[r] & T_k^{(c)}[1] }
\label{eq:1}
\end{equation}

With the $m$-cluster-tilting object $T$ in the $m$-cluster category, we associate a corresponding colored quiver $Q_T$ as follows:

\begin{enumerate}
\item The vertices of $Q_T$ are the integers from $1$ to $n$ where $n$ is the number of indecomposable summands of $T$.
\item The number $q_{ij}^{(c)}$ is the multiplicity of $T_j$ in $B_i^{(c)}$ in the exchange triangle (\ref{eq:1}).
\end{enumerate}

We now state the main theorem about colored quivers and $m$-cluster-tilting objects a proof of which can be found in \cite{BT}:

\begin{theo}\cite[Theorem 2.1]{BT}\label{th:mut}
\cor{Let $T$ be an $m$-cluster-tilting object, say \[T=\bigoplus_{i=1}^{n} T_i.\] Let $k \in \{1,\cdots,n\}$. Let \[T'=T/T_k \bigoplus T_k^{(1)}\] be the mutation of $T$ at $k$, where there is an exchange triangle \[T_k \to B_k^{(0)} \to T_k^{(1)} \to T_k[1]\] from Iyama and Yoshino. Then we have the following:
\[Q_{T'}=\mu_k(Q_T).\]}
\end{theo}

In particular, the colored quiver $Q_{T'}$ only depends on the colored quiver $Q_T$.

\subsection{The theorem of Keller and Reiten}
Here we only cite a beautiful theorem of Keller and Reiten in \cite{KR}, we will use all throughout the paper:

\begin{theo}\cite[Theorem 4.2]{KR}\label{th:kr}
Let $\mathcal{C}$ be a Hom-finite algebraic $(m+1)$-Calabi-Yau category. Let $T$ be an $m$-cluster-tilting object in $\mathcal{C}$, such that, for any $i \in \{ 1,\cdots,m \}$, we have \[\mathrm{Ext}_\mathcal{C}^{-i}(T,T)=0.\] Suppose that there exists a quiver $Q^{(T)}$ such that \cor{\[\mathrm{End}_{\mathcal{C}}(T) \simeq KQ^{(T)}.\]}

Then there exists an equivalence between the category $\mathcal{C}$ and the category $\mathcal{C}^{m}_{Q^{(T)}}$.
\end{theo}

\begin{rmk}
\cor{In our paper, if $Q$ is a quiver of a certain type ($A$, $D$, $\tilde{A}$, $\tilde{D}$) and $T$ an $m$-cluster-tilting object in the higher cluster category $\mathcal{C}^{m}_Q$, then we will apply this Theorem to the Iyama-Yoshino reduction of $\mathcal{C}^{m}_Q$ along $T$. In addition, the quiver $Q^{(T)}$ is no one but $Q_T$ defined just above. All these triangulated categories are algebraic (see the article of Buan, Iyama, Reiten and Scott \cite[Theorem I.1.8]{BIRS} for example).}

\cor{Moreover, in his paper \cite{Kel03}, Keller has shown that orbit categories are also algebraic triangulated categories. Then, all throughout the paper, we work with algebraic triangulated categories.}
\end{rmk}

\section{Geometric realizations}

\cor{This section is a survey of the geometric realizations of types $A$ (due to Baur and Marsh in \cite{BM01}), $D$ (due to Baur and Marsh \cite{BM02}), $\tilde{A}$ (due to Torkildsen in \cite{T}) and $\tilde{D}$ (due to the author in \cite{JM}). For each case, we define properly what are $m$-diagonals. After all this, we introduce some results common to these four cases. To be precise, we define $(m+2)$-angulations, the flip of an $(m+2)$-angulation. We also introduce the quiver associated with an $(m+2)$-angulation, and establish the compatibility between mutation of colored quivers, and flip of $(m+2)$-angulations. We finish with a bijection between $m$-diagonals and $m$-rigid objects in the higher cluster category.}

\cor{All throughout the section, and more generally the paper, we always consider paths up to homotopy.}

\subsection{Case $A$}\cite{BM01}

In this subsection, we recall the geometric realization of the $m$-cluster category of a quiver of type $A_n$, for a positive integer $n$, as did Baur and Marsh in their paper \cite{BM01}.

Let $Q$ be a quiver of type $A_n$, with $n$ vertices, and let $\mathcal{C}^{m}_Q$ be the $m$-cluster category associated with $Q$ (as defined in the preliminaries). Let $P$ be a polygon with $nm+2$ sides, with vertices numbered clockwise.

\begin{defi}
Let $i$ and $j$ be two different vertices of $P$. An $m$-diagonal $\alpha$ from the vertex $i$ to $j$ in $P$ is a path lying inside of $P$ linking $i$ and $j$ such that $\alpha$ cuts the figure into two polygons, one with $km+2$ sides, for some $k \in \mathbb{N}$ and one with $lm+2$ sides, for some $l \in \mathbb{N}$.
\end{defi}

In figure \ref{fig:adarc} we draw an example and a counter-example of an $m$-diagonal, for $m=2$, and $n=5$.

\begin{figure}[!h]
\centering
\begin{tikzpicture}[scale=0.7]
\fill[fill=black,fill opacity=0.1] (3.8,4.86) -- (2.62,4.2) -- (1.93,3.04) -- (1.91,1.69) -- (2.57,0.51) -- (3.73,-0.19) -- (5.08,-0.2) -- (6.26,0.46) -- (6.96,1.62) -- (6.97,2.97) -- (6.31,4.15) -- (5.15,4.84) -- cycle;
\draw (3.8,4.86)-- (2.62,4.2);
\draw (2.62,4.2)-- (1.93,3.04);
\draw (1.93,3.04)-- (1.91,1.69);
\draw (1.91,1.69)-- (2.57,0.51);
\draw (2.57,0.51)-- (3.73,-0.19);
\draw (3.73,-0.19)-- (5.08,-0.2);
\draw (5.08,-0.2)-- (6.26,0.46);
\draw (6.26,0.46)-- (6.96,1.62);
\draw (6.96,1.62)-- (6.97,2.97);
\draw (6.97,2.97)-- (6.31,4.15);
\draw (6.31,4.15)-- (5.15,4.84);
\draw (5.15,4.84)-- (3.8,4.86);
\draw (1.93,3.04)-- (3.73,-0.19);
\end{tikzpicture}
\begin{tikzpicture}[scale=0.7]
\fill[fill=black,fill opacity=0.1] (3.8,4.86) -- (2.62,4.2) -- (1.93,3.04) -- (1.91,1.69) -- (2.57,0.51) -- (3.73,-0.19) -- (5.08,-0.2) -- (6.26,0.46) -- (6.96,1.62) -- (6.97,2.97) -- (6.31,4.15) -- (5.15,4.84) -- cycle;
\draw (3.8,4.86)-- (2.62,4.2);
\draw (2.62,4.2)-- (1.93,3.04);
\draw (1.93,3.04)-- (1.91,1.69);
\draw (1.91,1.69)-- (2.57,0.51);
\draw (2.57,0.51)-- (3.73,-0.19);
\draw (3.73,-0.19)-- (5.08,-0.2);
\draw (5.08,-0.2)-- (6.26,0.46);
\draw (6.26,0.46)-- (6.96,1.62);
\draw (6.96,1.62)-- (6.97,2.97);
\draw (6.97,2.97)-- (6.31,4.15);
\draw (6.31,4.15)-- (5.15,4.84);
\draw (5.15,4.84)-- (3.8,4.86);
\draw (3.8,4.86)-- (5.08,-0.2);
\end{tikzpicture}
\caption{The first is a $2$-diagonal, the second is not}
\label{fig:adarc}
\end{figure}

\subsection{Case $D$}\cite{BM02}

This case was treated by Baur and Marsh in \cite{BM02}. However, we have to use a slightly different geometric realization, in order to make the notion of flip of an $(m+2)$-angulation compatible with colored quiver mutation (shown at the paragraph of common results). Baur and Marsh use a polygon with $nm-m+1$ sides with a puncture inside of it. We replace the puncture by an $(m-1)$-gon with, on each vertex of it, a disk. Considering arcs up to Dehn twist, as we will see just below, we have an obvious $1-1$ correspondence between the $m$-diagonals of Baur and Marsh, and the ones we are about to define.

\cor{Let $n \geq 4$ and $m$ be a positive integer. Suppose that $m>1$. Let $P$ be an $nm-m+1$-gon with $R$, an $(m-1)$-gon inside of it. We replace each vertex of $R$ by a disk, which we henceforth call a thick vertex. If $m=1$, the surface $P$ considered is a disk with $n$ marked points on its border, with no inner boundary components and with one puncture in its interior.}

\begin{defi}
Consider one moment a path starting at an arbitrary vertex of $P$, and ending at a thick vertex of $R$. This path is called to be left tangent at $R$ if it is $\mathcal{C}^\infty$, tangent to a thick vertex of $R$, and if there exists a neighborhood of this thick vertex such that the path is situated at the right of the vertex. We similarly define right tangency (see figure ...)
\end{defi}

\begin{figure}[h!]
\centering
\begin{tikzpicture}[scale=0.4]

	\draw [line width=2pt,fill=black,fill opacity=1] (-1.74,4.99) circle (0.9879524280045071cm);

	\fill[line width=2pt,fill=black,fill opacity=0.1] (-4.22,-1.35) -- (0.735,-1.355) -- (4.242249634685275,2.14517856687341) -- (4.247249634685276,7.100178566873409) -- (0.7470710678118662,10.607428201558685) -- (-4.207928932188134,10.612428201558686) -- (-7.7151785668734085,7.112249634685276) -- (-7.72017856687341,2.1572496346852774) -- cycle;

	\draw [line width=2pt] (-4.22,-1.35)-- (0.735,-1.355);

	\draw [line width=2pt] (0.735,-1.355)-- (4.242249634685275,2.14517856687341);

	\draw [line width=2pt] (4.242249634685275,2.14517856687341)-- (4.247249634685276,7.100178566873409);

	\draw [line width=2pt] (4.247249634685276,7.100178566873409)-- (0.7470710678118662,10.607428201558685);

	\draw [line width=2pt] (0.7470710678118662,10.607428201558685)-- (-4.207928932188134,10.612428201558686);

	\draw [line width=2pt] (-4.207928932188134,10.612428201558686)-- (-7.7151785668734085,7.112249634685276);

	\draw [line width=2pt] (-7.7151785668734085,7.112249634685276)-- (-7.72017856687341,2.1572496346852774);

	\draw [line width=2pt] (-7.72017856687341,2.1572496346852774)-- (-4.22,-1.35);

	\draw [shift={(-6.1072365457236115,0.6598225179735304)},line width=2pt]  plot[domain=-0.2863742472137245:0.7657250095781128,variable=\t]({1*7.132721130576252*cos(\t r)+0*7.132721130576252*sin(\t r)},{0*7.132721130576252*cos(\t r)+1*7.132721130576252*sin(\t r)});

	\begin{scriptsize}

		\draw [fill=black] (-4.22,-1.35) circle (2.5pt);

		\draw [fill=black] (0.735,-1.355) circle (2.5pt);

		\draw [fill=black] (4.242249634685275,2.14517856687341) circle (2.5pt);

		\draw [fill=black] (4.247249634685276,7.100178566873409) circle (2.5pt);

		\draw [fill=black] (0.7470710678118662,10.607428201558685) circle (2.5pt);

		\draw [fill=black] (-4.207928932188134,10.612428201558686) circle (2.5pt);

		\draw [fill=black] (-7.7151785668734085,7.112249634685276) circle (2.5pt);

		\draw [fill=black] (-7.72017856687341,2.1572496346852774) circle (2.5pt);

	\end{scriptsize}

\end{tikzpicture}
\caption{A path which is right tangent, with $m=1$ (indeed, we can see that there is only one thick vertex)}
\label{fig:tangent}
\end{figure}

\begin{defi}
Let us number the vertices of the polygon $P$ from $1$ to $nm-m+1$ clockwise. Then, an admissible arc between $i$ and $j$ is defined in the following way:
\begin{enumerate}
\item If $i \neq j$, then an admissible arc is an oriented path from $P_i$ to $P_j$, homotopic to the boundary path, lying inside of $P$, and which does not cross $R$.
\item If $i=j$, then an admissible arc is a path ending in $i$, and the other end of the path is tangent to one of the thick vertices of $R$. There are two more admissible arcs starting and ending at $i$, going around $R$: the left loop and the right loop. They look the same on the picture, but they are labelled differently, we will see later why.
\end{enumerate}
\end{defi}

\vspace{10pt}

Note that we only consider unoriented arcs, the order of $i$ and $j$ does not matter. For convenience, we will nevertheless use the terminology "from $i$ to $j$".

\begin{defi}
\cor{We call a Dehn twist the action of rotating $R$ by stretching the arc. It means that if we consider an arc $\alpha$ ending at a thick vertex of $R$, applying a Dehn twist of $R$ makes $\alpha$ roll around $R$. We can define Dehn twist in both clockwise and counterclockwise directions.}
\end{defi}

We now define $m$-diagonals.

\begin{defi}\label{def:diagoD}
\cor{An $m$-diagonal is an equivalence class of admissible arcs under the Dehn twist equivalence relation.}
\end{defi}

\cor{Whenever it is clear, we indifferently identify  the term of equivalence class and well-chosen representative.}

\subsection{Case $\tilde{A}$}\cite{Tor}

The geometric description of case $\tilde{A}$ has been completely treated by Torkildsen in \cite{Tor}. We recall part of his description.

Let $m$ be a positive integer. Let $Q$ be a quiver of type $\tilde{A_n}$, with $p$ arrows going one direction, and $q$ arrows going the other. Let $O$ be a regular $mp$-gon, with $I$, a regular $mq$-gon inside of it. In the following, we give the example with $p=4$ and $q=3$, for $m=2$. We number the vertices of the outer polygon $O_1,\cdots,O_{mp-1}$ and the vertices of the inner polygon $I_1,\cdots,I_{mq-1}$, counted in opposite directions.

\begin{defi}

We define an $m$-diagonal to be a path satisfying one of the three cases below:

\begin{itemize}
\item A path from $O_i$ to $I_j$ where $i$ and $j$ are congruent modulo $m$
\item A path from $O_i$ to $O_{i+km+1}$, where $k \geq 1$, is counted modulo $pm+1$ and $i \in \{1,\cdots,pm+1\}$ homotopic to the boundary path of the outer polygon $P$.
\item A path from $I_i$ to $I_{i+km+1}$ for some $i$ and some $k \geq 1$ homotopic to the boundary path of the inner polygon $I$.
\end{itemize}
\end{defi}

\begin{figure}[!h]
\centering
\begin{tikzpicture}[scale=0.7]
\fill[fill=black,fill opacity=0.1] (3.36,2.26) -- (1.04,2.26) -- (-0.6,0.62) -- (-0.6,-1.7) -- (1.04,-3.34) -- (3.36,-3.34) -- (5,-1.7) -- (5,0.62) -- cycle;
\fill[fill=black] (2.72,0.42) -- (1.6,0.42) -- (1.04,-0.55) -- (1.6,-1.52) -- (2.72,-1.52) -- (3.28,-0.55) -- cycle;
\draw (3.36,2.26)-- (1.04,2.26);
\draw (1.04,2.26)-- (-0.6,0.62);
\draw (-0.6,0.62)-- (-0.6,-1.7);
\draw (-0.6,-1.7)-- (1.04,-3.34);
\draw (1.04,-3.34)-- (3.36,-3.34);
\draw (3.36,-3.34)-- (5,-1.7);
\draw (5,-1.7)-- (5,0.62);
\draw (5,0.62)-- (3.36,2.26);
\draw (2.72,0.42)-- (1.6,0.42);
\draw (1.6,0.42)-- (1.04,-0.55);
\draw (1.04,-0.55)-- (1.6,-1.52);
\draw (1.6,-1.52)-- (2.72,-1.52);
\draw (2.72,-1.52)-- (3.28,-0.55);
\draw (3.28,-0.55)-- (2.72,0.42);
\draw [shift={(2.72,1.77)}] plot[domain=2.85:4.02,variable=\t]({1*1.75*cos(\t r)+0*1.75*sin(\t r)},{0*1.75*cos(\t r)+1*1.75*sin(\t r)});
\end{tikzpicture}
\hspace{10pt}
\begin{tikzpicture}[scale=0.7]
\fill[fill=black,fill opacity=0.1] (3.36,2.26) -- (1.04,2.26) -- (-0.6,0.62) -- (-0.6,-1.7) -- (1.04,-3.34) -- (3.36,-3.34) -- (5,-1.7) -- (5,0.62) -- cycle;
\fill[fill=black] (2.72,0.42) -- (1.6,0.42) -- (1.04,-0.55) -- (1.6,-1.52) -- (2.72,-1.52) -- (3.28,-0.55) -- cycle;
\draw (3.36,2.26)-- (1.04,2.26);
\draw (1.04,2.26)-- (-0.6,0.62);
\draw (-0.6,0.62)-- (-0.6,-1.7);
\draw (-0.6,-1.7)-- (1.04,-3.34);
\draw (1.04,-3.34)-- (3.36,-3.34);
\draw (3.36,-3.34)-- (5,-1.7);
\draw (5,-1.7)-- (5,0.62);
\draw (5,0.62)-- (3.36,2.26);
\draw (2.72,0.42)-- (1.6,0.42);
\draw (1.6,0.42)-- (1.04,-0.55);
\draw (1.04,-0.55)-- (1.6,-1.52);
\draw (1.6,-1.52)-- (2.72,-1.52);
\draw (2.72,-1.52)-- (3.28,-0.55);
\draw (3.28,-0.55)-- (2.72,0.42);
\draw [shift={(0.62,-2.12)}] plot[domain=0.1:1.47,variable=\t]({1*4.4*cos(\t r)+0*4.4*sin(\t r)},{0*4.4*cos(\t r)+1*4.4*sin(\t r)});
\end{tikzpicture}
\caption{Examples of $m$-diagonals for case $p=4$, $q=3$, $m=2$}
\end{figure}

\subsection{Case $\tilde{D}$}\cite{JM}

Let $n \geq 4$ and $m$ be a positive integer. Let $P$ be an $(n-2)m$-gon with two central $(m-1)$-gons $R$ and $S$ inside of it (cf figure \ref{fig:figure1}). We replace each vertex of $R$ and $S$ by a disk, which we henceforth call a thick vertex. \cor{If $m=1$, then the surface considered is a disk with $n-2$ marked points on its border, with no inner boundary components and with two thick vertices in its interior.}

\cor{We could have used tagged arcs, and a polygon with two punctures, as Baur and Marsh did for type $D$. However, as the adaptation we did for Dynkin type $D$, this sort of realization will imply that the flip of an $(m+2)$-angulation is not compatible with the mutation of the colored quiver associated with the $(m+2)$-angulation.}

\begin{figure}[!h]
\centering
\begin{tikzpicture}[scale=0.25]
\fill[fill=black,fill opacity=0.15] (0,4) -- (0,-4) -- (6.93,-8) -- (13.86,-4) -- (13.86,4) -- (6.93,8) -- cycle;
\draw [fill=black,fill opacity=1.0] (3.46,2) circle (0.5cm);
\draw [fill=black,fill opacity=1.0] (3.46,-2) circle (0.5cm);
\draw [fill=black,fill opacity=1.0] (10.39,2) circle (0.5cm);`
\draw [fill=black,fill opacity=1.0] (10.39,-2) circle (0.5cm);
\draw (0,4)-- (0,-4);
\draw (0,-4)-- (6.93,-8);
\draw (6.93,-8)-- (13.86,-4);
\draw (13.86,-4)-- (13.86,4);
\draw (13.86,4)-- (6.93,8);
\draw (6.93,8)-- (0,4);
\draw [shift={(1.86,0)}] plot[domain=-0.89:0.89,variable=\t]({1*2.57*cos(\t r)+0*2.57*sin(\t r)},{0*2.57*cos(\t r)+1*2.57*sin(\t r)});
\draw [shift={(5.07,0)}] plot[domain=2.25:4.04,variable=\t]({1*2.57*cos(\t r)+0*2.57*sin(\t r)},{0*2.57*cos(\t r)+1*2.57*sin(\t r)});
\draw [shift={(12,0)}] plot[domain=2.25:4.04,variable=\t]({1*2.57*cos(\t r)+0*2.57*sin(\t r)},{0*2.57*cos(\t r)+1*2.57*sin(\t r)});
\draw [shift={(8.79,0)}] plot[domain=-0.89:0.89,variable=\t]({1*2.57*cos(\t r)+0*2.57*sin(\t r)},{0*2.57*cos(\t r)+1*2.57*sin(\t r)});
\end{tikzpicture}
\caption{The $(n-2)m$-gon with two digons. Here $m=3$ and $n=4$.}
\label{fig:figure1}
\end{figure}

\begin{defi}\label{def:arcdtilde}
Let us number the vertices of the polygon $P$ from $1$ to $(n-2)m$ clockwise. Then, an admissible arc between $i$ and $j$ is defined in the following way:
\begin{enumerate}
\item If $i \neq j$, then an admissible arc is an oriented path from $i$ to $j$, lying inside of $P$, which does not cross any of the two inner polygons, satisfying one of the following conditions:
\begin{itemize}
\item Either the arc crosses the space between both central polygons and cuts the figure into a $(km+1)$-gon and a $(k'm+1)$-gon, for some $k$ ($k'$ is entirely determined by $k$). This arc is of type $1$.
\item Either, the arc is homotopic to a boundary path, and cuts the figure into a $km$-gon with both central polygons inside of it and a $k'm+2$-gon (where $k'$ is still entirely determined by $k$). This arc is of type $2$.
\end{itemize}
\item If $i=j$, there are four types of admissible arcs :
\begin{itemize}
	\item A path ending in $i$, and the other end of the path tangent to one of the thick vertices placed around $R$.
	\item A path ending in $i$, and the other end of the path tangent to one of the thick vertices placed around $S$.
	\item A path starting and ending at $i$, going around $R$, called a loop.
	\item A path starting and ending at $i$, going around $R$, also called a loop.
\end{itemize}
\cor{All these arcs are of type 3.}
\item Any arc being tangent to two disks, one arising from $R$, and one from $S$ is admissible. \cor{This arc is of type 4.}
\end{enumerate}
\end{defi}

\cor{Note that any admissible arc can have some self-crossings. However, the $m$-diagonals we are about to define do not have any of them. We will see at the end of the section that the $m$-diagonals are in $1-1$ correspondence with the $m$-rigid indecomposable objects of the associated higher cluster category. Nevertheless, we can figure in \cite{JM} that there are some admissible arcs with self-crossings that can be identified to objects which are not $m$-rigid (in the tubes of the Auslander-Reiten quiver of $\mathcal{C}_Q^m$ for instance).}

\begin{defi}
Consider one moment an arc starting at an arbitrary vertex of $P$, and ending at a thick vertex $R$ or $S$. This arc is called to be left tangent at $R$ if it is $\mathcal{C}^\infty$, tangent to a thick vertex, and if there exists a neighborhood of this thick vertex such that this one is situated at the right of the arc. We similarly define right tangency.
\end{defi}

\vspace{10pt}

Note that we only consider unoriented arcs, the order of $i$ and $j$ does not matter. For convenience, we will nevertheless use the terminology "from $i$ to $j$".

\begin{defi}
\cor{We call a Dehn twist the action of rotating $R$ (respectively $S$). It means that if we consider an arc $\alpha$ hung to $R$ (respectively $S$), applying a Dehn twist of $R$ (respectively $S$) makes $\alpha$ roll around $R$ (respectively $S$) only (see figure \ref{fig:roll}). We can define Dehn twist in both clockwise and counterclockwise directions.}
\end{defi}

\begin{figure}[!h]
	\centering
	\begin{tikzpicture}[scale=0.25]
	\fill[fill=black,fill opacity=0.15] (0,4) -- (0,-4) -- (6.93,-8) -- (13.86,-4) -- (13.86,4) -- (6.93,8) -- cycle;
	\draw [fill=black,fill opacity=1.0] (3.46,2) circle (0.4cm);
	\draw [fill=black,fill opacity=1.0] (3.46,-2) circle (0.4cm);
	\draw [fill=black,fill opacity=1.0] (10.39,2) circle (0.4cm);
	\draw [fill=black,fill opacity=1.0] (10.39,-2) circle (0.4cm);
	\draw (0,4)-- (0,-4);
	\draw (0,-4)-- (6.93,-8);
	\draw (6.93,-8)-- (13.86,-4);
	\draw (13.86,-4)-- (13.86,4);
	\draw (13.86,4)-- (6.93,8);
	\draw (6.93,8)-- (0,4);
	\draw [shift={(1.86,0)}] plot[domain=-0.89:0.89,variable=\t]({1*2.57*cos(\t r)+0*2.57*sin(\t r)},{0*2.57*cos(\t r)+1*2.57*sin(\t r)});
	\draw [shift={(5.07,0)}] plot[domain=2.25:4.04,variable=\t]({1*2.57*cos(\t r)+0*2.57*sin(\t r)},{0*2.57*cos(\t r)+1*2.57*sin(\t r)});
	\draw [shift={(12,0)}] plot[domain=2.25:4.04,variable=\t]({1*2.57*cos(\t r)+0*2.57*sin(\t r)},{0*2.57*cos(\t r)+1*2.57*sin(\t r)});
	\draw [shift={(8.79,0)}] plot[domain=-0.89:0.89,variable=\t]({1*2.57*cos(\t r)+0*2.57*sin(\t r)},{0*2.57*cos(\t r)+1*2.57*sin(\t r)});
	\draw [shift={(1.59,-3.73)}] plot[domain=-1.26:0.41,variable=\t]({0.94*11.55*cos(\t r)+-0.35*6.17*sin(\t r)},{0.35*11.55*cos(\t r)+0.94*6.17*sin(\t r)});
	\end{tikzpicture}
	\hspace{20pt}
	\begin{tikzpicture}[scale=0.25]
	\fill[fill=black,fill opacity=0.15] (0,4) -- (0,-4) -- (6.93,-8) -- (13.86,-4) -- (13.86,4) -- (6.93,8) -- cycle;
	\draw [fill=black,fill opacity=1.0] (3.46,2) circle (0.4cm);
	\draw [fill=black,fill opacity=1.0] (3.46,-2) circle (0.4cm);
	\draw [fill=black,fill opacity=1.0] (10.39,2) circle (0.4cm);
	\draw [fill=black,fill opacity=1.0] (10.39,-2) circle (0.4cm);
	\draw (0,4)-- (0,-4);
	\draw (0,-4)-- (6.93,-8);
	\draw (6.93,-8)-- (13.86,-4);
	\draw (13.86,-4)-- (13.86,4);
	\draw (13.86,4)-- (6.93,8);
	\draw (6.93,8)-- (0,4);
	\draw [shift={(1.86,0)}] plot[domain=-0.89:0.89,variable=\t]({1*2.57*cos(\t r)+0*2.57*sin(\t r)},{0*2.57*cos(\t r)+1*2.57*sin(\t r)});
	\draw [shift={(5.07,0)}] plot[domain=2.25:4.04,variable=\t]({1*2.57*cos(\t r)+0*2.57*sin(\t r)},{0*2.57*cos(\t r)+1*2.57*sin(\t r)});
	\draw [shift={(12,0)}] plot[domain=2.25:4.04,variable=\t]({1*2.57*cos(\t r)+0*2.57*sin(\t r)},{0*2.57*cos(\t r)+1*2.57*sin(\t r)});
	\draw [shift={(8.79,0)}] plot[domain=-0.89:0.89,variable=\t]({1*2.57*cos(\t r)+0*2.57*sin(\t r)},{0*2.57*cos(\t r)+1*2.57*sin(\t r)});
	\draw [shift={(8.18,-2.24)}] plot[domain=0.72:4.18,variable=\t]({-0.53*7.35*cos(\t r)+0.85*3.06*sin(\t r)},{-0.85*7.35*cos(\t r)+-0.53*3.06*sin(\t r)});
	\draw [shift={(9.47,0.18)}] plot[domain=-1.84:1.09,variable=\t]({-0.57*3.36*cos(\t r)+0.82*2.57*sin(\t r)},{-0.82*3.36*cos(\t r)+-0.57*2.57*sin(\t r)});
	\end{tikzpicture}
	\caption{These two arcs represent the same $m$-diagonal.}
	\label{fig:roll}
\end{figure}

We now define $m$-diagonals.

\begin{defi}
Let $\alpha$ and $\beta$ be two admissible arcs of the same type. We say that these arcs are equivalent when:
\begin{itemize}
\item \cor{If $\alpha$ and $\beta$ are of type $1$, $2$ or $4$, then they are said to be equivalent if they are homotopic.
\item If $\alpha$ and $\beta$ are of type $3$ and hang to the same vertex and to the same inner polygon (say $R$ for instance, but the case of $S$ is similar), then they are said to be equivalent if they are homotopic, or, if there exists a Dehn twist $t$ such that $\alpha=t(\beta)$. In this case, we add in the class of equivalence a loop, which we call a left loop (respectively right loop) if the arcs in the equivalence class are left tangent (respectively right tangent), drawn in figure \ref{fig:loop}, around $R$, ending at the same vertex as $\alpha$.}
\end{itemize}
\end{defi}

\begin{figure}[!h]
\centering
\begin{tikzpicture}[scale=0.5]
\fill[fill=black,fill opacity=0.1] (0,4) -- (-1.66,3.46) -- (-2.69,2.05) -- (-2.68,0.3) -- (-1.66,-1.11) -- (0,-1.65) -- (1.66,-1.11) -- (2.69,0.3) -- (2.69,2.05) -- (1.66,3.46) -- cycle;
\draw [fill=black,fill opacity=1.0] (-1.66,1.57) circle (0.2cm);
\draw [fill=black,fill opacity=1.0] (1.66,1.58) circle (0.2cm);
\draw (0,4)-- (-1.66,3.46);
\draw (-1.66,3.46)-- (-2.69,2.05);
\draw (-2.69,2.05)-- (-2.68,0.3);
\draw (-2.68,0.3)-- (-1.66,-1.11);
\draw (-1.66,-1.11)-- (0,-1.65);
\draw (0,-1.65)-- (1.66,-1.11);
\draw (1.66,-1.11)-- (2.69,0.3);
\draw (2.69,0.3)-- (2.69,2.05);
\draw (2.69,2.05)-- (1.66,3.46);
\draw (1.66,3.46)-- (0,4);
\draw(-1.66,1.17) circle (0.4cm);
\draw(1.66,1.18) circle (0.4cm);
\draw [shift={(0.3,0.15)}] plot[domain=0.96:3.44,variable=\t]({-0.77*2.41*cos(\t r)+0.64*1.46*sin(\t r)},{-0.64*2.41*cos(\t r)+-0.77*1.46*sin(\t r)});
\draw [shift={(0.99,-1.47)}] plot[domain=3.09:4.82,variable=\t]({-0.21*3.53*cos(\t r)+0.98*0.93*sin(\t r)},{-0.98*3.53*cos(\t r)+-0.21*0.93*sin(\t r)});
\end{tikzpicture}
\caption{Loop drawn at a vertex, around $R$. The same can be done for $S$}
\label{fig:loop}
\end{figure}

\begin{defi}\label{def:diago}
\cor{An $m$-diagonal is an equivalence class of admissible arcs.}
\end{defi}

\cor{Whenever it is clear, we indifferently identify  the term of equivalence class and well-chosen representative.}

\subsection{Common results on all types}

\cor{Now that we have introduced the four types of geometric realizations, we can set some results which apply to $A$, $D$, $\tilde{A}$, $\tilde{D}$. Most of the results are already shown in other papers (in this case, we tried to give precise references).}

\begin{defi}
Two arcs are said to be noncrossing if their class under homotopy contain representatives which do not cross.

An $(m+2)$-angulation of $P$ is a maximal set of noncrossing $m$-diagonals.
\end{defi}

\begin{defi}
We call by $t$-angle, a figure with $t$ sides, made of:
\begin{itemize}
\item Sides of $P$ (for all cases) and $T$ (for case $\tilde{A}$)
\item Sides of $R$ (for cases $D$ and $\tilde{D}$) or $S$ (for case $\tilde{D}$)
\item $m$-diagonals (for all cases).
\end{itemize}
\end{defi}

\begin{rmk}
We note that the definition of an $(m+2)$-angulation is equivalent to the following one : an $(m+2)$-angulation is a set of $m$-diagonals cutting the polygon into $(m+2)$-angles.
\end{rmk}

\begin{figure}[!h]
\centering
\begin{tikzpicture}[scale=0.9]
\fill[fill=black,fill opacity=0.1] (3.8,4.86) -- (2.62,4.2) -- (1.93,3.04) -- (1.91,1.69) -- (2.57,0.51) -- (3.73,-0.19) -- (5.08,-0.2) -- (6.26,0.46) -- (6.96,1.62) -- (6.97,2.97) -- (6.31,4.15) -- (5.15,4.84) -- cycle;
\draw (3.8,4.86)-- (2.62,4.2);
\draw (2.62,4.2)-- (1.93,3.04);
\draw (1.93,3.04)-- (1.91,1.69);
\draw (1.91,1.69)-- (2.57,0.51);
\draw (2.57,0.51)-- (3.73,-0.19);
\draw (3.73,-0.19)-- (5.08,-0.2);
\draw (5.08,-0.2)-- (6.26,0.46);
\draw (6.26,0.46)-- (6.96,1.62);
\draw (6.96,1.62)-- (6.97,2.97);
\draw (6.97,2.97)-- (6.31,4.15);
\draw (6.31,4.15)-- (5.15,4.84);
\draw (5.15,4.84)-- (3.8,4.86);
\draw (1.93,3.04)-- (3.73,-0.19);
\draw (2.62,4.2)-- (5.08,-0.2);
\draw (5.08,-0.2)-- (5.15,4.84);
\draw (5.08,-0.2)-- (6.97,2.97);
\end{tikzpicture}
\hspace{20pt}
\begin{tikzpicture}[scale=0.85]
\fill[fill=black,fill opacity=0.1] (0,4) -- (-1.66,3.46) -- (-2.69,2.05) -- (-2.68,0.3) -- (-1.66,-1.11) -- (0,-1.65) -- (1.66,-1.11) -- (2.69,0.3) -- (2.69,2.05) -- (1.66,3.46) -- cycle;
\draw [fill=black,fill opacity=1.0] (-1.66,1.57) circle (0.2cm);
\draw [fill=black,fill opacity=1.0] (1.66,1.58) circle (0.2cm);
\draw (0,4)-- (-1.66,3.46);
\draw (-1.66,3.46)-- (-2.69,2.05);
\draw (-2.69,2.05)-- (-2.68,0.3);
\draw (-2.68,0.3)-- (-1.66,-1.11);
\draw (-1.66,-1.11)-- (0,-1.65);
\draw (0,-1.65)-- (1.66,-1.11);
\draw (1.66,-1.11)-- (2.69,0.3);
\draw (2.69,0.3)-- (2.69,2.05);
\draw (2.69,2.05)-- (1.66,3.46);
\draw (1.66,3.46)-- (0,4);
\draw(-1.66,1.17) circle (0.4cm);
\draw(1.66,1.18) circle (0.4cm);
\draw [shift={(1.95,-3.35)}] plot[domain=-0.11:1.3,variable=\t]({-0.07*6.83*cos(\t r)+-1*1.89*sin(\t r)},{1*6.83*cos(\t r)+-0.07*1.89*sin(\t r)});
\draw [shift={(1.44,-3.05)}] plot[domain=2.2:4.47,variable=\t]({-0.01*5.75*cos(\t r)+1*1.5*sin(\t r)},{-1*5.75*cos(\t r)+-0.01*1.5*sin(\t r)});
\draw [shift={(-0.65,-0.55)}] plot[domain=1.52:3.46,variable=\t]({-0.79*3.42*cos(\t r)+0.62*1.27*sin(\t r)},{-0.62*3.42*cos(\t r)+-0.79*1.27*sin(\t r)});
\draw [shift={(0.3,0.15)}] plot[domain=0.96:3.44,variable=\t]({-0.77*2.41*cos(\t r)+0.64*1.46*sin(\t r)},{-0.64*2.41*cos(\t r)+-0.77*1.46*sin(\t r)});
\draw [shift={(0.99,-1.47)}] plot[domain=3.09:4.82,variable=\t]({-0.21*3.53*cos(\t r)+0.98*0.93*sin(\t r)},{-0.98*3.53*cos(\t r)+-0.21*0.93*sin(\t r)});
\draw [shift={(-1.94,-3.36)}] plot[domain=-0.11:1.3,variable=\t]({0.07*6.83*cos(\t r)+1*1.89*sin(\t r)},{1*6.83*cos(\t r)+-0.07*1.89*sin(\t r)});
\draw [shift={(-1.43,-3.05)}] plot[domain=2.2:4.47,variable=\t]({0.01*5.75*cos(\t r)+-1*1.5*sin(\t r)},{-1*5.75*cos(\t r)+-0.01*1.5*sin(\t r)});
\draw [shift={(-0.98,-1.47)}] plot[domain=3.09:4.82,variable=\t]({0.22*3.53*cos(\t r)+-0.98*0.93*sin(\t r)},{-0.98*3.53*cos(\t r)+-0.22*0.93*sin(\t r)});
\draw [shift={(-0.29,0.15)}] plot[domain=0.96:3.44,variable=\t]({0.77*2.41*cos(\t r)+-0.64*1.46*sin(\t r)},{-0.64*2.41*cos(\t r)+-0.77*1.46*sin(\t r)});
\draw [shift={(-7.41,-2.97)}] plot[domain=0.18:0.67,variable=\t]({1*7.53*cos(\t r)+0*7.53*sin(\t r)},{0*7.53*cos(\t r)+1*7.53*sin(\t r)});
\end{tikzpicture}
\caption{This is an example of a $4$-angulation for $A$ and $\tilde{D}$}
\label{fig:mang}
\end{figure}

We can define the twist of an $m$-diagonal and the flip of an $(m+2)$-angulation, as Buan and Thomas did in \cite{BT} for case $A$.

\begin{defi}
Let $\Delta$ be an $(m+2)$-angulation. Let $\alpha$ be an $m$-diagonal of $\Delta$. Let $a$ and $b$, be the ends of $\alpha$ ($a$ and $b$ can be vertices of $P$, or $T$, or also thick vertices of $R$ or $S$). The twist of $\alpha$ in $\Delta$ is defined as follows:

Let $(a',a)$ (respectively $(b,b')$) be the side of the $(m+2)$-angle ending at $a$ (respectively at $b'$) preceding $a$ clockwise (respectively preceding $b$). Then the twist of $\alpha$, namely $\kappa_{\Delta}(\alpha)$ is the $m$-diagonal $(a',b')$.

\cor{To be said in another way, the twist consists in rotating $\alpha$ clockwise along the sides of the polygon.}
\end{defi}

\begin{defi}
Consider $\Delta$ an $(m+2)$-angulation. Let $\alpha$ be an arc in $\Delta$. The flip of the $(m+2)$-angulation at $\alpha$ is defined by $\mu_\alpha\Delta=\Delta \setminus \{ \alpha \} \cup \{ \kappa_{\Delta}(\alpha) \}$.
\end{defi}

\begin{rmk}
\begin{enumerate}
\item Note that the twist has an inverse, which consists in moving the $m$-diagonal counterclockwise. Then the flip is also invertible.
\item A flip does not change the number of $m$-diagonals in the $(m+2)$-angulation.
\end{enumerate}
\end{rmk}

In figure \ref{fig:flip}, we can see two examples of flips.

\begin{figure}[!h]
\centering
\begin{tikzpicture}[scale=0.7]
\fill[fill=black,fill opacity=0.1] (3.8,4.86) -- (2.62,4.2) -- (1.93,3.04) -- (1.91,1.69) -- (2.57,0.51) -- (3.73,-0.19) -- (5.08,-0.2) -- (6.26,0.46) -- (6.96,1.62) -- (6.97,2.97) -- (6.31,4.15) -- (5.15,4.84) -- cycle;
\draw (3.8,4.86)-- (2.62,4.2);
\draw (2.62,4.2)-- (1.93,3.04);
\draw (1.93,3.04)-- (1.91,1.69);
\draw (1.91,1.69)-- (2.57,0.51);
\draw (2.57,0.51)-- (3.73,-0.19);
\draw (3.73,-0.19)-- (5.08,-0.2);
\draw (5.08,-0.2)-- (6.26,0.46);
\draw (6.26,0.46)-- (6.96,1.62);
\draw (6.96,1.62)-- (6.97,2.97);
\draw (6.97,2.97)-- (6.31,4.15);
\draw (6.31,4.15)-- (5.15,4.84);
\draw (5.15,4.84)-- (3.8,4.86);
\draw (5.08,-0.2)-- (5.15,4.84);
\draw (5.08,-0.2)-- (6.97,2.97);
\draw (5.08,-0.2)-- (1.91,1.69);
\draw [color=red] (1.93,3.04)-- (5.15,4.84);
\end{tikzpicture}
$\rightarrow$
\begin{tikzpicture}[scale=0.7]
\fill[fill=black,fill opacity=0.1] (3.8,4.86) -- (2.62,4.2) -- (1.93,3.04) -- (1.91,1.69) -- (2.57,0.51) -- (3.73,-0.19) -- (5.08,-0.2) -- (6.26,0.46) -- (6.96,1.62) -- (6.97,2.97) -- (6.31,4.15) -- (5.15,4.84) -- cycle;
\draw (3.8,4.86)-- (2.62,4.2);
\draw (2.62,4.2)-- (1.93,3.04);
\draw (1.93,3.04)-- (1.91,1.69);
\draw (1.91,1.69)-- (2.57,0.51);
\draw (2.57,0.51)-- (3.73,-0.19);
\draw (3.73,-0.19)-- (5.08,-0.2);
\draw (5.08,-0.2)-- (6.26,0.46);
\draw (6.26,0.46)-- (6.96,1.62);
\draw (6.96,1.62)-- (6.97,2.97);
\draw (6.97,2.97)-- (6.31,4.15);
\draw (6.31,4.15)-- (5.15,4.84);
\draw (5.15,4.84)-- (3.8,4.86);
\draw (5.08,-0.2)-- (5.15,4.84);
\draw (5.08,-0.2)-- (6.97,2.97);
\draw (5.08,-0.2)-- (1.91,1.69);
\draw [color=red] (2.62,4.2)-- (5.08,-0.2);
\end{tikzpicture}

\hspace{10pt}

\begin{tikzpicture}[scale=0.65]
\fill[fill=black,fill opacity=0.1] (0,4) -- (-1.66,3.46) -- (-2.69,2.05) -- (-2.68,0.3) -- (-1.66,-1.11) -- (0,-1.65) -- (1.66,-1.11) -- (2.69,0.3) -- (2.69,2.05) -- (1.66,3.46) -- cycle;
\draw [fill=black,fill opacity=1.0] (-1.66,1.57) circle (0.2cm);
\draw [fill=black,fill opacity=1.0] (1.66,1.58) circle (0.2cm);
\draw (0,4)-- (-1.66,3.46);
\draw (-1.66,3.46)-- (-2.69,2.05);
\draw (-2.69,2.05)-- (-2.68,0.3);
\draw (-2.68,0.3)-- (-1.66,-1.11);
\draw (-1.66,-1.11)-- (0,-1.65);
\draw (0,-1.65)-- (1.66,-1.11);
\draw (1.66,-1.11)-- (2.69,0.3);
\draw (2.69,0.3)-- (2.69,2.05);
\draw (2.69,2.05)-- (1.66,3.46);
\draw (1.66,3.46)-- (0,4);
\draw(-1.66,1.17) circle (0.4cm);
\draw(1.66,1.18) circle (0.4cm);
\draw [shift={(-7.95,-1.41)}] plot[domain=-0.03:0.66,variable=\t]({1*7.96*cos(\t r)+0*7.96*sin(\t r)},{0*7.96*cos(\t r)+1*7.96*sin(\t r)});
\draw [shift={(8.42,-1.56)},color=red]  plot[domain=2.5:3.15,variable=\t]({1*8.42*cos(\t r)+0*8.42*sin(\t r)},{0*8.42*cos(\t r)+1*8.42*sin(\t r)});
\draw [shift={(-1.34,-1.17)}] plot[domain=-1.77:1.07,variable=\t]({-0.15*3.42*cos(\t r)+-0.99*1.28*sin(\t r)},{0.99*3.42*cos(\t r)+-0.15*1.28*sin(\t r)});
\draw [shift={(1.34,-1.17)}] plot[domain=-1.77:1.07,variable=\t]({0.15*3.42*cos(\t r)+0.99*1.28*sin(\t r)},{0.99*3.42*cos(\t r)+-0.15*1.28*sin(\t r)});
\draw [shift={(1.47,-0.36)}] plot[domain=-0.32:1.37,variable=\t]({-0.91*3.99*cos(\t r)+-0.42*1.82*sin(\t r)},{0.42*3.99*cos(\t r)+-0.91*1.82*sin(\t r)});
\draw [shift={(-1.47,-0.36)}] plot[domain=-0.32:1.37,variable=\t]({0.9*3.99*cos(\t r)+0.43*1.82*sin(\t r)},{0.43*3.99*cos(\t r)+-0.9*1.82*sin(\t r)});
\draw [shift={(-0.37,0.1)}] plot[domain=2.86:5.31,variable=\t]({0.78*2.44*cos(\t r)+0.63*1.36*sin(\t r)},{-0.63*2.44*cos(\t r)+0.78*1.36*sin(\t r)});
\draw [shift={(-1.26,-1.78)}] plot[domain=-1.59:0.07,variable=\t]({-0.16*3.73*cos(\t r)+-0.99*1.26*sin(\t r)},{0.99*3.73*cos(\t r)+-0.16*1.26*sin(\t r)});
\draw [shift={(1.26,-1.78)}] plot[domain=-1.59:0.07,variable=\t]({0.16*3.73*cos(\t r)+0.99*1.26*sin(\t r)},{0.99*3.73*cos(\t r)+-0.16*1.26*sin(\t r)});
\draw [shift={(0.37,0.1)}] plot[domain=2.86:5.31,variable=\t]({-0.78*2.44*cos(\t r)+-0.63*1.36*sin(\t r)},{-0.63*2.44*cos(\t r)+0.78*1.36*sin(\t r)});
\end{tikzpicture}
$\to$
\begin{tikzpicture}[scale=0.65]
\fill[fill=black,fill opacity=0.1] (0,4) -- (-1.66,3.46) -- (-2.69,2.05) -- (-2.68,0.3) -- (-1.66,-1.11) -- (0,-1.65) -- (1.66,-1.11) -- (2.69,0.3) -- (2.69,2.05) -- (1.66,3.46) -- cycle;
\draw [fill=black,fill opacity=1.0] (-1.66,1.57) circle (0.2cm);
\draw [fill=black,fill opacity=1.0] (1.66,1.58) circle (0.2cm);
\draw (0,4)-- (-1.66,3.46);
\draw (-1.66,3.46)-- (-2.69,2.05);
\draw (-2.69,2.05)-- (-2.68,0.3);
\draw (-2.68,0.3)-- (-1.66,-1.11);
\draw (-1.66,-1.11)-- (0,-1.65);
\draw (0,-1.65)-- (1.66,-1.11);
\draw (1.66,-1.11)-- (2.69,0.3);
\draw (2.69,0.3)-- (2.69,2.05);
\draw (2.69,2.05)-- (1.66,3.46);
\draw (1.66,3.46)-- (0,4);
\draw(-1.66,1.17) circle (0.4cm);
\draw(1.66,1.18) circle (0.4cm);
\draw [shift={(-7.95,-1.41)}] plot[domain=-0.03:0.66,variable=\t]({1*7.96*cos(\t r)+0*7.96*sin(\t r)},{0*7.96*cos(\t r)+1*7.96*sin(\t r)});
\draw [shift={(-1.34,-1.17)}] plot[domain=-1.77:1.07,variable=\t]({-0.15*3.42*cos(\t r)+-0.99*1.28*sin(\t r)},{0.99*3.42*cos(\t r)+-0.15*1.28*sin(\t r)});
\draw [shift={(1.34,-1.17)}] plot[domain=-1.77:1.07,variable=\t]({0.15*3.42*cos(\t r)+0.99*1.28*sin(\t r)},{0.99*3.42*cos(\t r)+-0.15*1.28*sin(\t r)});
\draw [shift={(1.47,-0.36)}] plot[domain=-0.32:1.37,variable=\t]({-0.91*3.99*cos(\t r)+-0.42*1.82*sin(\t r)},{0.42*3.99*cos(\t r)+-0.91*1.82*sin(\t r)});
\draw [shift={(-1.47,-0.36)}] plot[domain=-0.32:1.37,variable=\t]({0.9*3.99*cos(\t r)+0.43*1.82*sin(\t r)},{0.43*3.99*cos(\t r)+-0.9*1.82*sin(\t r)});
\draw [shift={(-0.37,0.1)}] plot[domain=2.86:5.31,variable=\t]({0.78*2.44*cos(\t r)+0.63*1.36*sin(\t r)},{-0.63*2.44*cos(\t r)+0.78*1.36*sin(\t r)});
\draw [shift={(-1.26,-1.78)}] plot[domain=-1.59:0.07,variable=\t]({-0.16*3.73*cos(\t r)+-0.99*1.26*sin(\t r)},{0.99*3.73*cos(\t r)+-0.16*1.26*sin(\t r)});
\draw [shift={(1.26,-1.78)}] plot[domain=-1.59:0.07,variable=\t]({0.16*3.73*cos(\t r)+0.99*1.26*sin(\t r)},{0.99*3.73*cos(\t r)+-0.16*1.26*sin(\t r)});
\draw [shift={(0.37,0.1)}] plot[domain=2.86:5.31,variable=\t]({-0.78*2.44*cos(\t r)+-0.63*1.36*sin(\t r)},{-0.63*2.44*cos(\t r)+0.78*1.36*sin(\t r)});
\draw [shift={(-0.54,-0.5)},color=red]  plot[domain=0.67:1.85,variable=\t]({1*4.12*cos(\t r)+0*4.12*sin(\t r)},{0*4.12*cos(\t r)+1*4.12*sin(\t r)});
\end{tikzpicture}
\caption{Example of a flip for cases $A$ and $\tilde{D}$}
\label{fig:flip}
\end{figure}

\begin{rmk}
\cor{We set a figure which is the geometric realization of type $A$, $D$, $\tilde{A}$, or $\tilde{D}$ as defined above. Let $\Delta$ be an $(m+2)$-angulation. Let $\Delta'$ be another $(m+2)$-angulation of this figure. Then there exists a finite sequence of flips $\mu_{\alpha_n} \circ \cdots \circ \mu_{\alpha_n}$ such that \[\Delta'=\mu_{\alpha_n} \circ \cdots \circ \mu_{\alpha_n}(\Delta)\] where, for $i \in \{1,\cdots,n\}$, $\alpha_i$ is some $m$-diagonal of $\mu_{\alpha_{i-1}} \circ \cdots \circ \mu_{\alpha_1}(\Delta)$, using the convention that $\mu_{\alpha_0}(\Delta)=\Delta$.}
\end{rmk}

\cor{This is shown by Torkildsen in \cite{T} for case $\tilde{A}$ and by the author in \cite{JM} for case $\tilde{D}$. The proof is similar and left to the reader for cases $A$ and $D$.}

With this lemma and the fact that the flip does not change the number of $m$-diagonals in an $(m+2)$-angulation, we notice that all the $(m+2)$-angulations contain exactly $n$ $m$-diagonals.

For case $A$, we can cite the paper of Tzanaki, in \cite{Tz}.

Note that if $\Lambda$ is a set of noncrossing $m$-diagonals, it can be completed with $m$-diagonals in order to form an $(m+2)$-angulation.

We now associate a colored quiver with an $(m+2)$-angulation.

\begin{defi}\label{def:colquiver}
Let $\Delta$ be an $(m+2)$-angulation. We define the colored quiver $Q_\Delta$ associated with $\Delta$ in the following way:

\begin{enumerate}
\item The vertices of $Q_\Delta$ are in bijection with the $m$-diagonals of $\Delta$.
\item If $i$ and $j$ form two sides of some $(m+2)$-angle in $\Delta$, then we draw an arrow from $i$ to $j$ and an arrow from $j$ to $i$. The color of the corresponding arrow is the number of edges between both $m$-diagonals, counted counterclockwise from $i$ (respectively from $j$).
\end{enumerate}
\end{defi}

\begin{prop}
There is an equivalent definition: the vertices are similarly defined, and for $i$ and $j$ two vertices, and $c$ an integer,
\[ 
q_{ij}^{(c)} = \left\{
    \begin{array}{ll}
        1 & \mbox{if there is a counterclockwise oriented angle from } \kappa_\Delta^c(i) \mbox{ to } j \\
        0 & \mbox{otherwise.}
    \end{array}
    \right.
\]
\end{prop}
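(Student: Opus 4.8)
The plan is to unwind both recipes for $q_{ij}^{(c)}$ into explicit combinatorial conditions on the polygon underlying $\Delta$ and compare them term by term. Unwinding Definition \ref{def:colquiver}, $q_{ij}^{(c)}=1$ holds exactly when $i$ and $j$ are two sides of one common $(m+2)$-gon of $\Delta$ and $j$ is reached from $i$ by walking $c+1$ steps clockwise along the boundary of that gon (equivalently, exactly $c$ of its sides, boundary edges or $m$-diagonals, lie strictly between $i$ and $j$ clockwise from $i$). Since an $m$-diagonal is a side of at most two $(m+2)$-gons of $\Delta$, each vertex $i$ acquires at most two outgoing arrows of a given colour, one from each such gon, and each has multiplicity one.

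First I would recast the twist combinatorially. For an $m$-diagonal $\alpha$ with endpoints $a,b$ and $v\in\{a,b\}$, let $\rho_v(\alpha)$ be the far endpoint of the side of $\Delta$ -- an $m$-diagonal of $\Delta$ or a boundary edge -- lying immediately counterclockwise of $\alpha$ in the cyclic order of the sides of $\Delta$ at $v$. A short lemma identifies $\kappa_\Delta(\alpha)$ with $(\rho_a(\alpha),\rho_b(\alpha))$: when $\alpha\in\Delta$, that side is precisely the side at $v$ of one of the two $(m+2)$-gons of $\Delta$ adjacent to $\alpha$ that is named in the definition of $\kappa_\Delta$. The gain is that the pair $(\rho_a(\alpha),\rho_b(\alpha))$ makes sense for \emph{every} $m$-diagonal, not only those in $\Delta$; hence $\kappa_\Delta$ extends to a permutation (of finite order on each orbit) of the set of all $m$-diagonals, $\kappa_\Delta^c(i)$ is unambiguous, and ``$\kappa_\Delta^c(i)$ and $j$ share a counterclockwise oriented angle'' means exactly that $j$ is the side of $\Delta$ immediately counterclockwise of $\kappa_\Delta^c(i)$ at a common vertex. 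In particular the $q_{ij}^{(c)}$ produced by the Proposition lie in $\{0,1\}$, and monochromaticity and the symmetry $q_{ij}^{(c)}=q_{ji}^{(m-c)}$ will come out of the cyclic structure of the gons.

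The heart of the proof is then an induction on $c$, run separately for each of the (at most two) $(m+2)$-gons through $i$. Fix $i\in\Delta$ and such a gon $P$; let $w_0$ be the endpoint of $i$ at which the adjacent side of $P$ is the counterclockwise neighbour of $i$, write $s_0=i,s_1,\dots,s_{m+1}$ for the sides of $P$ listed from $i$ in that rotational direction, and $w_0,w_1,\dots,w_m$ for the vertices of $P$ so reached ($w_c$ common to $s_c$ and $s_{c+1}$). The claim is that for $0\le c\le m$ the arc $\kappa_\Delta^c(i)$ has $w_c$ as an endpoint, lies inside the interior angle of $P$ at $w_c$ (bounded by $s_c$ and $s_{c+1}$), and has $s_{c+1}$ as its counterclockwise neighbour among the sides of $\Delta$ at $w_c$. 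The case $c=0$ is the definition of $w_0$. In the inductive step the twist carries the endpoint $w_c$ of $\kappa_\Delta^c(i)$ along $s_{c+1}$ to its other end $w_{c+1}$ (the second endpoint of $\kappa_\Delta^c(i)$ being dragged through the other gon at $i$, which is irrelevant near $P$), and one checks that the new arc emerges at $w_{c+1}$ on the side of $s_{c+1}$ facing into $P$, hence inside the angle bounded by $s_{c+1}$ and $s_{c+2}$; since that angle is an angle of a gon of $\Delta$ it contains no side of $\Delta$, so the counterclockwise neighbour of $\kappa_\Delta^{c+1}(i)$ there is $s_{c+2}$. Granting the claim, $\kappa_\Delta^c(i)$ shares a counterclockwise oriented angle with an $m$-diagonal $j$ iff $j=s_{c+1}$ for one of the two gons through $i$ with $s_{c+1}$ not a boundary edge; and $s_{c+1}$ is exactly the side $c+1$ steps clockwise from $i$ in that gon, so this matches Definition \ref{def:colquiver} and the two descriptions of $q_{ij}^{(c)}$ agree. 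The main obstacle is the geometric bookkeeping in the inductive step -- showing that $\kappa_\Delta^c(i)$ cannot escape the interior angle of $P$ it is meant to occupy, while tracking the clockwise/counterclockwise conventions consistently through both endpoints, recalling that the twist sends the two endpoints of an arc through the two distinct $(m+2)$-gons incident to it. Once this is settled the comparison is routine; the statement is also essentially contained in the polygon combinatorics of \cite{BT}.
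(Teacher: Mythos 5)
Your proposal is correct and follows essentially the same route as the paper: the paper's proof likewise iterates the twist, observing that each application of $\kappa_\Delta$ reduces by one the number of edges between the rotated arc and $j$ until they share an oriented angle, and argues the converse with the inverse twist. Your version merely makes explicit what the paper leaves implicit (the extension of $\kappa_\Delta$ to all $m$-diagonals via the counterclockwise neighbour at each endpoint, and the induction tracking which angle of the $(m+2)$-gon the iterate occupies), so it is a more careful write-up of the same argument rather than a different one.
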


\begin{proof}
To understand this proof, let us recall that the colors are counted modulo $m+1$.

We only have to show that the arrows are the same. If $i$ and $j$ form two sides of the polygon, with a color $c$, it means that if we apply the twist to $i$, then there will be $c-1$ edges from $\kappa_\Delta(i)$ to $j$. Then if we apply the twist $c$ times, there will be no edge from $ \kappa_\Delta^c(i)$ to $j$, and they will share an oriented angle \cor{(note that, here, the number $c$ is a power, not an index).}

On the other hand, if $ \kappa_\Delta^c(i)$ and $j$ share an oriented angle, it suffices to apply the inverse of the twist $c$ times to make sure that $i$ and $j$ form two sides of a polygon, and that there are $c$ edges between $i$ and $j$.
\end{proof}

\begin{lem}
The quiver fulfills the conditions asked for colored quivers in the article of Buan and Thomas \cite{BT}. In particular it is symmetric.
\end{lem}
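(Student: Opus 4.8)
The plan is to verify the three defining conditions of a colored quiver (from the definition of Buan and Thomas) for the quiver $Q_\Delta$ attached to an $(m+2)$-angulation $\Delta$, using the second, twist-based description of $Q_\Delta$ given in the previous proposition.

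First I would dispose of condition $(1)$, that $q_{ii}^{(c)}=0$ for all $c$. Since the vertices of $Q_\Delta$ are the $m$-diagonals of $\Delta$, and since $\kappa_\Delta^c(i)$ and $i$ cannot share an oriented angle of an $(m+2)$-gon of $\Delta$ — indeed $\kappa_\Delta^c(i)$ is disjoint from $i$ (it is obtained by sliding both endpoints of $i$) for $0<c<m$, and $\kappa_\Delta^m(i)=i$ itself, which does not bound an angle with itself — this condition is immediate from the twist description.

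Next, for monochromaticity (condition $(2)$), I would argue that for fixed $i$ and $j$ there is at most one value of $c\in\{1,\dots,m\}$ for which $\kappa_\Delta^c(i)$ and $j$ share a counterclockwise oriented angle. This follows because the $m$-diagonal $j$ bounds exactly two angles inside the two $(m+2)$-gons adjacent to it, and only one of these is ``counterclockwise from the side coming from the $\kappa_\Delta^c(i)$ direction''; as $c$ increases by $1$ the diagonal $\kappa_\Delta(i)$ moves by one edge along the boundary, so the position in which $\kappa_\Delta^c(i)$ can be the other side of that angle is unique. Hence $q_{ij}^{(c)}\in\{0,1\}$ and at most one color occurs, giving monochromaticity (and in particular $q_{ij}^{(c)}\le 1$, which together with the corollary that every $(m+2)$-angulation has exactly $n$ diagonals matches the count of arrows out of each vertex).

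The main point — and the one the lemma singles out — is symmetry: $q_{ij}^{(c)}=q_{ji}^{(m-c)}$. Here the hard part is a careful bookkeeping of orientations. Suppose $\kappa_\Delta^c(i)$ and $j$ share a counterclockwise oriented angle in some $(m+2)$-gon $R$ of $\Delta$; I would then count the edges of $R$ between the two diagonals. Going clockwise from (an endpoint of) $i$, after $c$ twists the arc $\kappa_\Delta^c(i)$ is flush against $j$, so $i$ and $j$ were separated by $c$ edges on one side, hence by $(m+2)-2-c=m-c$ edges on the other side of the $(m+2)$-gon they co-bound — since an $(m+2)$-gon has $m+2$ sides and two of them are $i$ and $j$ themselves. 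Applying the same analysis starting from $j$ and twisting in the same (clockwise) sense shows that $\kappa_\Delta^{m-c}(j)$ and $i$ share an oriented angle, and the orientation is reversed because we are now reading the polygon from the other diagonal; this yields $q_{ji}^{(m-c)}=1$. The converse inclusion is symmetric. I expect the orientation/counting argument in this last step to be the only genuine obstacle; it is essentially the same edge-count that underlies the equivalence of the two definitions of $Q_\Delta$ in the preceding proposition, so I would phrase it to reuse that bookkeeping. The remaining conditions are, as indicated, formal.
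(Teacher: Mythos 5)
Your proposal is essentially the paper's argument: the decisive point, symmetry, is settled by exactly the same count the paper uses, namely that inside a common $(m+2)$-gon the two diagonals $i$ and $j$ account for two of the $m+2$ sides, so the remaining $m$ edges split as $c$ on one side and $m-c$ on the other, giving $q_{ij}^{(c)}=q_{ji}^{(m-c)}$; monochromaticity likewise comes down to the fact that a pair of diagonals determines at most one common $(m+2)$-gon and hence at most one color. The paper simply works with the direct Definition of $Q_\Delta$ rather than the twist description, which makes conditions $(1)$ and $(2)$ immediate.

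One inaccuracy in your treatment of condition $(1)$: for $1\le c\le m$ the diagonal $\kappa_\Delta^c(i)$ is \emph{not} disjoint from $i$ --- inside the $(2m+2)$-gon formed by the two $(m+2)$-gons adjacent to $i$, the twist rotates $i$ by one vertex, so $\kappa_\Delta^c(i)$ crosses $i$ for all $c\in\{1,\dots,m\}$, and the twist has order $m+1$ there, so $\kappa_\Delta^{m+1}(i)=i$, not $\kappa_\Delta^m(i)=i$. The conclusion $q_{ii}^{(c)}=0$ is unaffected (crossing arcs cannot bound a common angle of an $(m+2)$-gon of $\Delta$, and with the direct definition there are no loops at all), but the justification as written is wrong; correcting it, or simply appealing to the direct definition as the paper does, fixes the step.
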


\begin{proof}
By definition, the quiver contains no loops (it means, no arrows from $i$ to $i$).

The condition of monochromaticity is respected since two arcs can only share one polygon.

If there is an arrow from $i$ to $j$ of color $c$, it means that $i$ and $j$ are sides of the same $(m+2)$-angle in the $(m+2)$-angulation. If we count from $i$ to $j$, there are $c$ edges between them. But if we count from $j$ to $i$, as we deal with $(m+2)$-angles, it means that from $j$ to $i$ there are $m-c$ edges. So there is an arrow from $j$ to $i$ of color $m-c$. Then the symmetry is respected.
\end{proof}

We remark that we have the compatibility between the mutation of a colored quiver in the sense of Buan and Thomas, and the flip of an $(m+2)$-angulation in all cases.

\begin{theo}\label{theo:corresp}
Let $\Delta$ be any $(m+2)$-angulation. Let $Q_\Delta$ be the colored quiver associated with the $(m+2)$-angulation $\Delta$. Let us introduce $\Delta_k=\mu_k\Delta$, which means that $\Delta_k$ is obtained from $\Delta$ by mutation at $k$, then the colored quiver $Q_{\Delta_k}$ associated with $\Delta_k$ is the mutation at vertex $k$ of the colored quiver $Q_\Delta$.

To be clear, \[Q_{\mu_k\Delta}=\mu_k(Q_\Delta)\]
where $\mu_k$ denotes both flip of the $(m+2)$-angulation (at the left of the equation) and colored quiver mutation (at the right of the equation).
\end{theo}

\begin{proof}
This is shown by Buan and Thomas in case $A$ (\cite{BT}), Torkildsen in case $\tilde{A}$ (\cite{Tor}, Proposition 5.1), and by the author for case $\tilde{D}$ (\cite{JM}, Theorem 2.28). Case $D$, with the adaptation of geometric realization from the one of Baur and Marsh is similar to that of case $\tilde{D}$ and the proof is left to the reader.
\end{proof}

We now arrive to the main theorem of the section:

\begin{theo}[\cite{BM01}, Proposition 5.4]\label{th:ra}
Let $Q$ be a quiver of type $A$, $D$, $\tilde{A}$ or $\tilde{D}$. Let $\mathcal{C}^{m}_Q$ be the $m$-cluster category associated with $Q$.

There is a $1-1$ correspondence between the $m$-diagonals of the geometric realization, and the $m$-rigid indecomposable objects of $\mathcal{C}^{m}_Q$.
\end{theo}

\cor{This Theorem is claimed and shown for type $A$ (respectively $D$) by Baur and Marsh in \cite{BM01}, Proposition 5.4 (respectively \cite{BM02}, consequence of Theorem 3.6), for type $\tilde{A}$ by Torkildsen (\cite{Tor}, consequence of Theorem 7.3), and for type $\tilde{D}$ by the author (\cite{JM}, Lemma 5.1).}

\cor{For cases $A$, $D$, and $\tilde{A}$, this bijection is found in the following way: the authors build a quiver from $m$-diagonals, which is aimed to be isomorphic to the Auslander-Reiten quiver of $Q$. This Theorem is a consequence to the isomorphism of these quivers.}

\section{Noncrossing arcs and extensions}

\cor{All throughout the section, we identify the $m$-diagonals with the vertices of the Auslander-Reiten quiver. Indeed, thanks to Theorem \ref{th:ra}, we can identify each $m$-diagonal of the geometric realization with an $m$-rigid indecomposable object in the associated higher cluster category.}

In this section, we are going to show in types $A$, $D$, $\tilde{A}$ and $\tilde{D}$ the following theorem :

\begin{theo}\label{th:cross}
Let $\alpha$ and $\beta$ be two arcs in the polygon $P$. Let $X_\alpha$ and $X_\beta$ be the associated $m$-rigid objects. If $\forall i \in \{1,\cdots, m\}, \mathrm{Ext}^i_\mathcal{C}(X_\alpha,X_\beta)=0$, then $\alpha$ and $\beta$ do not cross each other.
\end{theo}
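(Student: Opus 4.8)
The plan is to prove the contrapositive: assuming $\alpha$ and $\beta$ cross, we exhibit an index $i\in\{1,\dots,m\}$ with $\mathrm{Ext}^i_{\mathcal C}(X_\alpha,X_\beta)\neq 0$. The strategy is a reduction to the smallest possible configuration, using the Iyama-Yoshino reduction / cutting correspondence that the paper sets up. In each of the four types $A$, $D$, $\tilde A$, $\tilde D$ the combinatorial data is an arc in a marked polygon (possibly with punctures replaced by inner polygons with thick vertices). First I would check the statement directly in the base cases: a polygon with the minimal number of sides allowing two crossing $m$-diagonals (for type $A$, this is a hexagon-type configuration where two $m$-diagonals meet; for $D$, $\tilde A$, $\tilde D$ the analogous minimal pictures). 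In each base case one computes $\mathrm{Ext}^\bullet$ explicitly from the known description of the $m$-cluster category on the small polygon — this is a finite check since there are only finitely many crossing pairs up to symmetry.

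For the inductive step I would argue as follows. Suppose $\alpha$ and $\beta$ cross inside a large polygon $P$. Choose an arc $\gamma$ of $P$ that is disjoint from both $\alpha$ and $\beta$ and that is ``nontrivial'', i.e. cutting $P$ along $\gamma$ produces a strictly smaller polygon $P'$ still containing (copies of) $\alpha$ and $\beta$ and still of one of the four admissible types. Such a $\gamma$ exists whenever $P$ is large enough: the union of $\alpha$ and $\beta$ together with their crossing point touches only boundedly many of the $(m+2)$-cells of any ambient $(m+2)$-angulation, so some ``peripheral'' $m$-diagonal away from them can be cut. By the results recalled in Section~2 (the cutting-equals-Iyama-Yoshino-reduction statement, together with Theorem~\ref{th:kr} of Keller-Reiten identifying the reduced category with the $m$-cluster category of the smaller quiver $Q_{\gamma}$), the category obtained after cutting along $\gamma$ is again an $m$-cluster category of one of our four types, and the arcs $\alpha,\beta$ descend to $m$-rigid objects $X'_\alpha, X'_\beta$ in it. The key functoriality input I need is that the reduction functor is such that a nonzero $\mathrm{Ext}^i$ in the smaller category forces a nonzero $\mathrm{Ext}^i$ upstairs — concretely, for $X,Y$ in the subcategory $\gamma^{\perp}$ (objects with no extensions with $X_\gamma$) one has $\mathrm{Ext}^i_{\mathcal C / (X_\gamma)}(X,Y)\cong \mathrm{Ext}^i_{\mathcal C}(X,Y)$ for $i\in\{1,\dots,m\}$. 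Since $\alpha,\beta$ are disjoint from $\gamma$, they lie in $\gamma^{\perp}$, so this isomorphism applies. Iterating, we come down to a base case where the crossing persists, and there the explicit computation gives the nonzero extension; pulling back through the chain of isomorphisms yields $\mathrm{Ext}^i_{\mathcal C}(X_\alpha,X_\beta)\neq 0$.

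The main obstacle I expect is twofold. First, one must be careful that cutting along $\gamma$ really keeps the crossing of $\alpha$ and $\beta$ intact and does not ``unfold'' it — this is where the hypothesis that $\gamma$ is noncrossing with both is used, but in the $\tilde D$ case the equivalence classes of arcs (arcs considered up to rotation of the central polygons, and the self-folded/loop classes of item (3) in Definition~\ref{def:diago}) make ``disjointness'' subtler, and a crossing pair might, under a bad choice of $\gamma$, land on the boundary of the cut piece. So the genuinely delicate point is the existence, in every type, of a good peripheral arc $\gamma$ reducing the size while preserving the crossing; I would handle this by a careful case analysis on where the crossing point of $\alpha$ and $\beta$ sits relative to the central polygons (outer region, between the two inner polygons, etc.). Second, one needs the $\mathrm{Ext}$-compatibility of the reduction in the precise form above for all $i\le m$, not just $i=1$; this should follow from the general theory of Iyama-Yoshino reduction in $(m+1)$-Calabi-Yau categories (the reduced category is again Hom-finite $(m+1)$-Calabi-Yau, and for objects in $X_\gamma^{\perp}$ the higher extension groups are unchanged), but it must be stated and invoked carefully. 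Once those two points are in place, the inductive scheme goes through uniformly in all four Dynkin/Euclidean types.
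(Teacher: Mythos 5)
Your overall scheme --- prove the contrapositive, cut along an arc $\gamma$ disjoint from both $\alpha$ and $\beta$, identify cutting with Iyama--Yoshino reduction (via Theorem \ref{th:kr}), transport the nonvanishing of $\mathrm{Ext}$ back up, and iterate down to a minimal configuration --- is exactly the strategy of the paper, and your two flagged difficulties (existence of a good peripheral arc, compatibility of $\mathrm{Ext}^i$ with the reduction for all $i\le m$) are precisely where the paper invests effort (the counting lemma reducing $\tilde{D}_n$ to $\tilde{D}_4$, and Lemma \ref{lem:shift}). The genuine gap is in your base case. You claim it is ``a finite check since there are only finitely many crossing pairs up to symmetry,'' but the terminal configurations are not finite uniformly in $m$: for type $A$ the minimal picture is a $(2m+2)$-gon whose $m$-diagonals are the $m+1$ diameters, any two of which cross, and up to rotation the crossing pairs are still indexed by an offset $k\in\{1,\dots,m\}$; for $\tilde{D}$ the terminal case is the $2m$-gon of $\tilde{D}_4$ with its several types of arcs. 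So the base case is not an enumeration but is where the substantive content of Theorem \ref{th:cross} lives, and you give no mechanism for producing the nonzero extension there. The paper's mechanism is structural: shift $\beta$ by $k<m$ so that $\alpha$ and $\beta[k]$ share an endpoint and read off a nonzero morphism from the Hom-hammock or slice of the Auslander--Reiten quiver (with ``long moves'' in type $\tilde{A}$), together with Lemma \ref{lem:morph}, which realizes $\beta$ as an $i$-th twist of $\alpha$ inside some $(m+2)$-angulation and hence as the $i$-th shift in a suitable reduction, giving $\mathrm{Ext}^i\neq 0$; in $\tilde{D}_4$ this is organized as a nine-case analysis by arc type. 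Without some such argument your induction has nothing to bottom out on.

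A secondary caution: the isomorphism $\mathrm{Ext}^i_{\mathcal{C}'}(X,Y)\cong\mathrm{Ext}^i_{\mathcal{C}}(X,Y)$ for $X,Y\in X_\gamma^{\perp}$ is not a formal consequence of Iyama--Yoshino alone, because $\mathrm{Ext}^i$ in $\mathcal{C}'$ is computed with the reduced shift $\langle 1\rangle$, which a priori differs from $[1]$. One needs to identify $Y\langle i\rangle$ with $Y[i]$ on the relevant objects, which is what Lemma \ref{lem:shift} does under additional Hom- and negative-Ext-vanishing hypotheses (verified in the situations where the paper applies it). If you keep your formulation, you should state and check those hypotheses at each cutting step rather than appeal to ``the general theory.''
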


\begin{rmk}
The result in cases $A$ and $D$ has already been shown by Thomas in \cite{Tho} and by Baur and Marsh in \cite{BM01} for case $A$ and \cite{BM02} for case $D$.

We nonetheless include a proof as it illustrates the method that will be applied in types $\tilde{A}$ and $\tilde{D}$.
\end{rmk}

\cor{Our strategy to prove this is based on the work of Marsh and Palu in \cite{MP}. It consists in showing that cutting along a well-chosen $m$-diagonal $\alpha$ (we will define in this section whats means "cutting along") corresponds in the higher cluster category, to applying the Iyama-Yoshino reduction. To be precise, we are going to show that, under hypotheses, there is an equivalence of categories between on the one hand, the higher cluster category associated with a quiver of the geometric realization where we have forgotten $\alpha$, and on the other hand, the Iyama Yoshino reduction of the higher cluster category under the $m$-rigid indecomposable object associated with $\alpha$. But first, let us show a useful lemma:}

\begin{lem}\label{lem:shift}
Let $\mathcal{C}$ be a Hom-finite triangulated category with a Serre functor. Let $X \in \mathcal{C}$ be an $m$-rigid indecomposable object. Let $Y$ be an object of $\mathcal{C}$ which belongs to $X^\perp$. Suppose that $\mathcal{C}(X,Y)=0$ and for all $i \in \{1,\cdots,k\}$, where $k \leq m$, we have $\mathrm{Ext}^{-i}_\mathcal{C}(X,Y)=0$, then, we have the following isomorphism, in the Iyama-Yoshino reduction:

\[ \forall i \in \{1,\cdots, k \} Y \langle -i \rangle \simeq Y[-i] \]

where $[1]$ denotes the shift in $\mathcal{C}$ and $\langle 1 \rangle $ is the shift in the Iyama-Yoshino reduction $X^\perp/(X)$.
\end{lem}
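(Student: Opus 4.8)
The statement is essentially a compatibility assertion between two shift functors: the intrinsic shift $[1]$ of the ambient category $\mathcal{C}$ and the shift $\langle 1 \rangle$ of the Iyama–Yoshino reduction $X^\perp/(X)$. Recall how $\langle 1 \rangle$ is constructed: for an object $Y \in X^\perp$, one takes a triangle $Y \to X_Y \to Y\langle 1 \rangle \to Y[1]$ in $\mathcal{C}$ where $Y \to X_Y$ is a (minimal) left $\mathrm{add}\,X$-approximation; dually $Y\langle -1 \rangle$ sits in a triangle $Y\langle -1 \rangle \to X^Y \to Y \to Y\langle -1\rangle[1]$ where $X^Y \to Y$ is a right $\mathrm{add}\,X$-approximation. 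The plan is to show that under the stated vanishing hypotheses these approximations can be taken to be zero, so that the defining triangles degenerate and force $Y\langle -1\rangle \simeq Y[-1]$, and then to iterate.

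The key step is the base case $i=1$. I would argue that $\mathcal{C}(X, Y) = 0$: indeed, by the Serre duality / $(m+1)$-Calabi–Yau–type property available in $\mathcal{C}$ (Hom-finite triangulated with Serre functor), $\mathcal{C}(X,Y) \cong D\,\mathcal{C}(Y, S X)$, and $SX$ is built from $X$ and its shifts within the relevant range, so this group is controlled by $\mathcal{C}(Y,X)$ and the groups $\mathrm{Ext}^{-i}_\mathcal{C}(Y,X)$ for $i$ in the admissible range — all assumed to vanish. (More carefully: since $X$ is $m$-rigid in an $(m+1)$-Calabi–Yau context, $S \simeq [m+1]$ on the relevant pieces, and one only needs the vanishing for $i \le k \le m$; this is exactly why the hypothesis is stated with $k \le m$.) Hence any right $\mathrm{add}\,X$-approximation $X^Y \to Y$ is the zero map, so the triangle defining $Y\langle -1 \rangle$ reads $Y\langle -1 \rangle \to 0 \to Y \to Y\langle -1\rangle[1]$, which gives $Y\langle -1 \rangle \simeq Y[-1]$ in $\mathcal{C}$, and a fortiori in the reduction.

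For the inductive step, suppose $Y\langle -j \rangle \simeq Y[-j]$ for all $j < i$ (with $i \le k$). To run the same argument one more time I need $\mathcal{C}(X, Y\langle -(i-1)\rangle) = 0$, i.e. $\mathcal{C}(X, Y[-(i-1)]) = \mathrm{Ext}^{-(i-1)}_\mathcal{C}(X,Y) = 0$. By the same Serre-duality manipulation this is controlled by $\mathcal{C}(Y, X)$ shifted, i.e. by $\mathrm{Ext}^{-i'}_\mathcal{C}(Y,X)$ for $i'$ ranging over a window of width governed by $m$ — again covered by the hypotheses since $i \le k \le m$. Then the right $\mathrm{add}\,X$-approximation of $Y\langle -(i-1)\rangle \simeq Y[-(i-1)]$ vanishes, its defining triangle degenerates, and $Y\langle -i\rangle \simeq Y[-(i-1)][-1] = Y[-i]$.

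The main obstacle I expect is the careful bookkeeping in the Serre-duality step: one must verify that the Serre functor, when evaluated on $X$ and on the successive shifts $Y[-j]$, only ever produces terms whose Hom-groups with the other object fall inside the range $\{1,\dots,m\}$ (for the $m$-rigidity of $X$ to kill the "wrong-way" extensions) or inside the range $\{0,1,\dots,k\}$ of indices for which we have assumed $\mathrm{Ext}^{-\bullet}(Y,X)=0$ — in particular checking that no index outside $[-k, m]$ is ever needed. Everything else (degenerating the triangle, passing from $\mathcal{C}$ to the additive quotient, the induction scaffolding) is formal. I would also note at the outset that it suffices to treat right approximations / $Y\langle -1\rangle$, since the statement only concerns negative shifts, so the "$X_Y$ from left approximations" half of the Iyama–Yoshino picture never enters.
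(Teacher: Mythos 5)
Your skeleton — compute $Y\langle -1\rangle$ from the triangle over a right $\mathrm{add}\,X$-approximation of $Y$, show the approximation vanishes so the triangle degenerates to give $Y\langle -1\rangle\simeq Y[-1]$, then iterate on $Y[-j]$ — is exactly the paper's argument. The genuine gap is the Serre-duality step that you yourself flag as the main obstacle: it does not survive the bookkeeping. What the induction needs is $\mathcal{C}(X,Y[-j])=0$ for $0\le j\le k-1$. Serre duality converts this into $D\,\mathcal{C}(Y,(SX)[j])$, about which the hypotheses say nothing for a general Serre functor; and in the $(m+1)$-Calabi--Yau case, where $S\simeq[m+1]$, it becomes $D\,\mathrm{Ext}^{\,m+1+j}_{\mathcal{C}}(Y,X)$, i.e.\ vanishing in degrees $m+1,\dots,m+k$. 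The hypotheses ($Y\in X^\perp$, $\mathcal{C}(Y,X)=0$, $\mathrm{Ext}^{-i}(Y,X)=0$ for $i\le k$) only give vanishing of $\mathrm{Ext}^{t}(Y,X)$ for $t\in\{-k,\dots,m\}$, so your parenthetical claim that ``one only needs the vanishing for $i\le k\le m$'' is false: the window you need lies strictly above the assumed range, and no re-indexing brings it inside. Indeed, in the stated generality the implication you are trying to prove at the base case is simply not a formal consequence of the hypotheses: in $\mathcal{D}^b(KQ)$ with $Q$ of type $A_2$, take for $X$ and $Y$ the two indecomposable projectives with $\mathrm{Hom}(Y,X)=0$ and $\mathrm{Hom}(X,Y)\neq 0$; then $X$ is $m$-rigid, $Y\in X^\perp$, $\mathcal{C}(Y,X)=0$ and all negative extensions from $Y$ to $X$ vanish, yet $\mathcal{C}(X,Y)\neq 0$.

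The paper's proof does not attempt this translation: it uses the vanishing $\mathcal{C}(X,Y)=0$, and inductively $\mathcal{C}(X,Y[-i+1])=0$, directly as input (that is, it reads the hypotheses as Hom-vanishing in the direction that kills right $\mathrm{add}\,X$-approximations), after which the degeneration-and-induction argument is verbatim what you wrote. So the correct repair of your proposal is not a Serre-duality bridge but to take (or restate) the hypotheses in the form $\mathcal{C}(X,Y[-j])=0$ for $0\le j\le k-1$; this is also what is actually checked in the places where the lemma is applied later in the paper, where Hom-vanishing between $X_\alpha$ and the relevant shifted summands is verified in the needed direction.
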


\begin{proof}
We are going to use Theorem \ref{th:iy}. We show by induction on $i$ that $Y \langle -i \rangle \simeq Y[-i]$. It is defined on objects as follows: Let \[
\xymatrix{
T_k^{(c)} \ar^{f_k^{(c)}}[r] & B_k^{(c)} \ar^{g_k^{(c+1)}}[r] & T_k^{(c+1)} \ar^{h_k^{(c+1)}}[r] & T_k^{(c)}[1] }
\]
be the exchange triangles as seen in Theorem \ref{th:iy}. If we use the notations of the Theorem, we have that $Y=T_k^{(c)}$. Then $Y\langle1\rangle$ is in fact the object $T_k^{(c+1)}$ in the exchange triangle.

First, $Y \langle -1 \rangle \simeq Y[-1]$. Indeed, $\mathcal{C}(X,Y)=0$. \cor{Then, the category $\mathcal{C}$ being triangulated, there is a triangle $0 \to Y \to Y\langle 1 \rangle \to Y[1]$. Then we have the following triangle:}

\[ \xymatrix@1{
Y[-1] \ar[r] & Y \langle -1 \rangle \ar[r] & 0 \ar[r]^0 & Y}. \]

As the right morphism is zero, we have that $Y \langle -1 \rangle \simeq Y[-1]$.

Suppose $Y \langle -i+1 \rangle ~\simeq ~ Y[-i+1]$ for an $i \in \{1,\cdots,k\}$. Then \[ \mathcal{C}(X,Y \langle -i+1 \rangle )=\mathcal{C}(X,Y[-i+1])=0 \]

Let us once again take an $\mathrm{add}_X$-approximation of $Y \langle -i+1 \rangle$. Then we have \[ \xymatrix@1{
Y[-i] \ar[r] & Y\langle -i \rangle \ar[r] & 0 \ar[r] & Y[-i+1]}. \]
As the right morphism is zero, we have an isomorphism $Y \langle -i \rangle \simeq Y[-i]$.
\end{proof}

In this section, we will need a proper definition of "cutting along". All throughout the section, the letter $S$ denotes the whole geometric realization, containing all the polygons, disks, edges and vertices.

\begin{defi}
Let $P$ be a polygon of the geometric realization of type $A$, $D$, $\tilde{A}$ or $\tilde{D}$. Let $\alpha$ be an $m$-diagonal in it. The action of cutting the figure along $\alpha$ consists in separating the figure into two, where $\alpha$ becomes an edge of each figure. See figure \ref{fig:cut} for an illustration.
\end{defi}

\begin{figure}[h!]
\centering
\begin{tikzpicture}[scale=0.65]
\fill[fill=black,fill opacity=0.1] (0.,4.) -- (-1.78,3.42) -- (-2.87913480366,1.90451239418) -- (-2.87757227419,0.0324019384391) -- (-1.77590924476,-1.48124880383) -- (0.00505645156165,-2.05827669617) -- (1.78505645156,-1.47827669617) -- (2.88419125522,0.0372109096461) -- (2.88262872576,1.90932136539) -- (1.78096569632,3.42297210766) -- cycle;
\draw [fill=black,fill opacity=1.0] (-1.35763963744,0.969726407807) circle (0.4cm);
\draw [fill=black,fill opacity=1.0] (1.362696089,0.971996896021) circle (0.4cm);
\draw (0.,4.)-- (-1.78,3.42);
\draw (-1.78,3.42)-- (-2.87913480366,1.90451239418);
\draw (-2.87913480366,1.90451239418)-- (-2.87757227419,0.0324019384391);
\draw (-2.87757227419,0.0324019384391)-- (-1.77590924476,-1.48124880383);
\draw (-1.77590924476,-1.48124880383)-- (0.00505645156165,-2.05827669617);
\draw (0.00505645156165,-2.05827669617)-- (1.78505645156,-1.47827669617);
\draw (1.78505645156,-1.47827669617)-- (2.88419125522,0.0372109096461);
\draw (2.88419125522,0.0372109096461)-- (2.88262872576,1.90932136539);
\draw (2.88262872576,1.90932136539)-- (1.78096569632,3.42297210766);
\draw (1.78096569632,3.42297210766)-- (0.,4.);
\draw [shift={(5.67552365823,-0.867172962917)}] plot[domain=2.3078995791:3.34863646756,variable=\t]({1.*5.7942149119*cos(\t r)+0.*5.7942149119*sin(\t r)},{0.*5.7942149119*cos(\t r)+1.*5.7942149119*sin(\t r)});
\end{tikzpicture}
\includegraphics[scale=0.35]{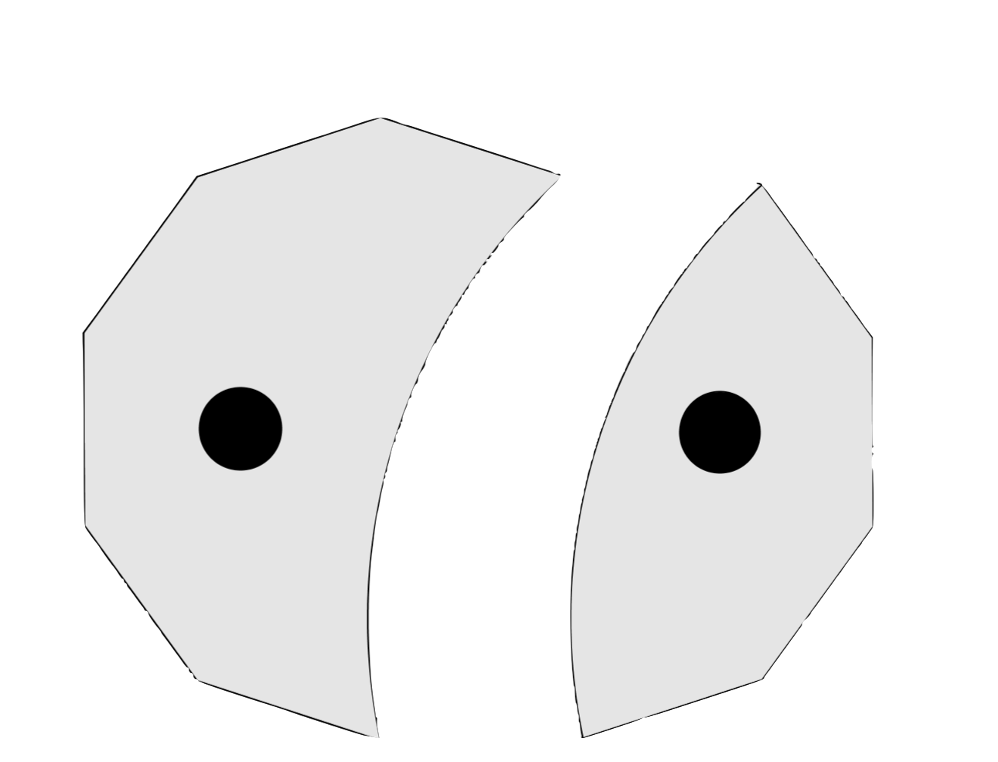}
\caption{Cutting along an $m$-diagonal in type $\tilde{D}$.}
\label{fig:cut}
\end{figure}

\cor{We observe that, cutting along this arc leads to two figures of type $D$. note that this $m$-diagonal corresponds to a vertex at the center of the quiver of type $\tilde{D}$. Strangely, dividing a quiver of type $\tilde{D}$ at this corresponding vertex gives two vertices of type $D$. We will show that this is not a coincidence.}

\subsection{Cases $A$ and $D$}

\cor{In order to set our strategy, we start with the study of $m$-ears by giving a useful lemma.}

\begin{defi}
We call an $m$-diagonal $\alpha$ an $m$-ear, when $\alpha$ divides $P$ into an $m+2$-gon and an $(n-1)m+2$-gon for the $A$ case (respectively $(n-1)m-m+1$-gon containing the interior polygon for case $D$).
\end{defi}

Note that any $(m+2)$-angulation of a figure of type $A$ or $D$, necessarily contains an $m$-ear. We can draw the associated colored quiver. For example, in the $(m+2)$-angulations in the figure \ref{fig:AD}, the associated colored quivers are: $1 \to 2 \to \cdots \to n$ or \[\xymatrix{ & & & & n-1 \\ 1 \ar[r] & 2 & \cdots \ar[l] & n-2 \ar[l] \ar[ur] \ar[dr] & \\ & & & & n}.\]

\begin{figure}[h!]
\centering
\begin{tikzpicture}[scale=0.7]

	\fill[line width=2pt,fill=black,fill opacity=0.1] (-5.81,3.19) -- (-3.71,3.19) -- (-2.225075759508251,4.674924240491749) -- (-2.225075759508251,6.774924240491748) -- (-3.71,8.259848480983498) -- (-5.81,8.259848480983498) -- (-7.294924240491749,6.77492424049175) -- (-7.294924240491749,4.67492424049175) -- cycle;

	\draw [line width=2pt] (-5.81,3.19)-- (-3.71,3.19);

	\draw [line width=2pt] (-3.71,3.19)-- (-2.225075759508251,4.674924240491749);

	\draw [line width=2pt] (-2.225075759508251,4.674924240491749)-- (-2.225075759508251,6.774924240491748);

	\draw [line width=2pt] (-2.225075759508251,6.774924240491748)-- (-3.71,8.259848480983498);

	\draw [line width=2pt] (-3.71,8.259848480983498)-- (-5.81,8.259848480983498);

	\draw [line width=2pt] (-5.81,8.259848480983498)-- (-7.294924240491749,6.77492424049175);

	\draw [line width=2pt] (-7.294924240491749,6.77492424049175)-- (-7.294924240491749,4.67492424049175);

	\draw [line width=2pt] (-7.294924240491749,4.67492424049175)-- (-5.81,3.19);

	\draw [shift={(-7.203422514538918,6.7370229846215395)},line width=2pt]  plot[domain=-0.7930112376757146:0.007613074278266029,variable=\t]({1*4.978491028265584*cos(\t r)+0*4.978491028265584*sin(\t r)},{0*4.978491028265584*cos(\t r)+1*4.978491028265584*sin(\t r)});

	\draw [shift={(-9.164664493606315,5.72492424049175)},line width=2pt]  plot[domain=-0.4350323362210098:0.43503233622100984,variable=\t]({1*6.014915264809814*cos(\t r)+0*6.014915264809814*sin(\t r)},{0*6.014915264809814*cos(\t r)+1*6.014915264809814*sin(\t r)});

	\draw [shift={(-9.05742223886937,3.944873665908306)},line width=2pt]  plot[domain=-0.1402392442906919:0.9256374076881401,variable=\t]({1*5.400440616490154*cos(\t r)+0*5.400440616490154*sin(\t r)},{0*5.400440616490154*cos(\t r)+1*5.400440616490154*sin(\t r)});

	\draw [shift={(-7.304340905032673,3.1805833354590765)},line width=2pt]  plot[domain=0.0026198524974553107:1.5681764742974413,variable=\t]({1*3.594353240175786*cos(\t r)+0*3.594353240175786*sin(\t r)},{0*3.594353240175786*cos(\t r)+1*3.594353240175786*sin(\t r)});

	\draw [shift={(-6.895085126611702,0.570372771004706)},line width=2pt]  plot[domain=0.6882909584210836:1.6679035317712605,variable=\t]({1*4.123980368849571*cos(\t r)+0*4.123980368849571*sin(\t r)},{0*4.123980368849571*cos(\t r)+1*4.123980368849571*sin(\t r)});

	\begin{scriptsize}

		\draw [fill=black] (-5.81,3.19) circle (2.5pt);

		\draw [fill=black] (-3.71,3.19) circle (2.5pt);

		\draw [fill=black] (-2.225075759508251,4.674924240491749) circle (2.5pt);

		\draw [fill=black] (-2.225075759508251,6.774924240491748) circle (2.5pt);

		\draw [fill=black] (-3.71,8.259848480983498) circle (2.5pt);

		\draw [fill=black] (-5.81,8.259848480983498) circle (2.5pt);

		\draw [fill=black] (-7.294924240491749,6.77492424049175) circle (2.5pt);

		\draw [fill=black] (-7.294924240491749,4.67492424049175) circle (2.5pt);

	\end{scriptsize}

\end{tikzpicture}
\hspace{10pt}
\begin{tikzpicture}[scale=0.7]

	\fill[line width=2pt,fill=black,fill opacity=0.1] (-5.81,3.19) -- (-3.71,3.19) -- (-2.225075759508251,4.674924240491749) -- (-2.225075759508251,6.774924240491748) -- (-3.71,8.259848480983498) -- (-5.81,8.259848480983498) -- (-7.294924240491749,6.77492424049175) -- (-7.294924240491749,4.67492424049175) -- cycle;

	\draw [line width=2pt,fill=black,fill opacity=1] (-4.79,5.73) circle (0.5cm);

	\draw [line width=2pt] (-5.81,3.19)-- (-3.71,3.19);

	\draw [line width=2pt] (-3.71,3.19)-- (-2.225075759508251,4.674924240491749);

	\draw [line width=2pt] (-2.225075759508251,4.674924240491749)-- (-2.225075759508251,6.774924240491748);

	\draw [line width=2pt] (-2.225075759508251,6.774924240491748)-- (-3.71,8.259848480983498);

	\draw [line width=2pt] (-3.71,8.259848480983498)-- (-5.81,8.259848480983498);

	\draw [line width=2pt] (-5.81,8.259848480983498)-- (-7.294924240491749,6.77492424049175);

	\draw [line width=2pt] (-7.294924240491749,6.77492424049175)-- (-7.294924240491749,4.67492424049175);

	\draw [line width=2pt] (-7.294924240491749,4.67492424049175)-- (-5.81,3.19);

	\draw [shift={(-7.203422514538918,6.7370229846215395)},line width=2pt]  plot[domain=-0.7930112376757146:0.007613074278266029,variable=\t]({1*4.978491028265584*cos(\t r)+0*4.978491028265584*sin(\t r)},{0*4.978491028265584*cos(\t r)+1*4.978491028265584*sin(\t r)});

	\draw [shift={(-9.164664493606315,5.72492424049175)},line width=2pt]  plot[domain=-0.4350323362210098:0.43503233622100984,variable=\t]({1*6.014915264809814*cos(\t r)+0*6.014915264809814*sin(\t r)},{0*6.014915264809814*cos(\t r)+1*6.014915264809814*sin(\t r)});

	\draw [shift={(-3.157759174983143,9.593075770435298)},line width=2pt]  plot[domain=4.013024101644992:4.626355695726939,variable=\t]({1*6.426845980008459*cos(\t r)+0*6.426845980008459*sin(\t r)},{0*6.426845980008459*cos(\t r)+1*6.426845980008459*sin(\t r)});

	\draw [shift={(-3.4067294056737296,7.07819483481802)},line width=2pt]  plot[domain=3.2194328453656786:4.634548788608804,variable=\t]({1*3.9000041188296892*cos(\t r)+0*3.9000041188296892*sin(\t r)},{0*3.9000041188296892*cos(\t r)+1*3.9000041188296892*sin(\t r)});

	\draw [shift={(-3.312834733878982,6.324359720714343)},line width=2pt]  plot[domain=2.4822361326244184:4.586347337952615,variable=\t]({1*3.159422590830399*cos(\t r)+0*3.159422590830399*sin(\t r)},{0*3.159422590830399*cos(\t r)+1*3.159422590830399*sin(\t r)});

	\draw [shift={(-6.3677624602332985,4.494453870625662)},line width=2pt]  plot[domain=-0.4562678524137693:0.8661611661912132,variable=\t]({1*2.960625135949438*cos(\t r)+0*2.960625135949438*sin(\t r)},{0*2.960625135949438*cos(\t r)+1*2.960625135949438*sin(\t r)});

	\draw [shift={(-4.264026035504306,5.033372606238494)},line width=2pt]  plot[domain=-0.6563511523444188:2.7741134367816302,variable=\t]({-0.47092809546053704*2.022079637248662*cos(\t r)+-0.88217159833329*1.0559082511117115*sin(\t r)},{0.88217159833329*2.022079637248662*cos(\t r)+-0.47092809546053704*1.0559082511117115*sin(\t r)});

	\draw [shift={(-3.2884614864731807,5.724924240491749)},line width=2pt]  plot[domain=1.7355807838461155:4.5476045233334705,variable=\t]({1*2.5697345433758465*cos(\t r)+0*2.5697345433758465*sin(\t r)},{0*2.5697345433758465*cos(\t r)+1*2.5697345433758465*sin(\t r)});

	\begin{scriptsize}

		\draw [fill=black] (-5.81,3.19) circle (2.5pt);

		\draw [fill=black] (-3.71,3.19) circle (2.5pt);

		\draw [fill=black] (-2.225075759508251,4.674924240491749) circle (2.5pt);

		\draw [fill=black] (-2.225075759508251,6.774924240491748) circle (2.5pt);

		\draw [fill=black] (-3.71,8.259848480983498) circle (2.5pt);

		\draw [fill=black] (-5.81,8.259848480983498) circle (2.5pt);

		\draw [fill=black] (-7.294924240491749,6.77492424049175) circle (2.5pt);

		\draw [fill=black] (-7.294924240491749,4.67492424049175) circle (2.5pt);

	\end{scriptsize}

\end{tikzpicture}
\caption{Two $(m+2)$-angulations for type $A$ and $D$.}
\label{fig:AD}
\end{figure}

\cor{Notice that an $m$-ear in cases $A$ and $D$ ending at vertex $1$ of the polygon $P$ corresponds to vertex $1$ in the quiver $Q$. This is morally obvious. Indeed, this is compatible with the fact that the new figure obtained by cutting along $\alpha$ realizes geometrically the new quiver without vertex $1$.}

\begin{lem}\label{lem:cutalong}
\cor{Let $\mathcal{C}$ be the $m$-cluster category of type $A_n$ (respectively $D_n$). Let $\alpha$ be an $m$-ear in the corresponding polygon. Let $X_\alpha$ be the $m$-rigid object associated with $\alpha$ (we can take this object from Theorem \ref{th:ra}). Let \[\mathcal{U}=\{ Y \in \mathcal{C},~\forall i \in \{1,\cdots,m\},~\mathrm{Ext}_\mathcal{C}^i(X_\alpha,Y)=0\}\] be a subcategory of $\mathcal{C}$. Let $\mathcal{C'}$ be the Iyama-Yoshino reduction of $\mathcal{C}$ defined in Theorem \ref{th:iy}: $\mathcal{C'}=\mathcal{U}/(X_\alpha)$. Then, we have an equivalence of categories :
\[ \mathcal{C'} \simeq \mathcal{C}^{m}_{Q/\alpha} \]
where $Q/\alpha$ is the quiver obtained from $Q$ by removing $\alpha$ and all incident arrows.}
\end{lem}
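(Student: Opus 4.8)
The plan is to apply the theorem of Keller and Reiten (Theorem~\ref{th:kr}) to the Iyama-Yoshino reduction $\mathcal{C}'=\mathcal{U}/(X_\alpha)$. By the remark following that theorem, $\mathcal{C}'$ is a Hom-finite algebraic category, and by the general theory of Iyama-Yoshino it is triangulated and $(m+1)$-Calabi-Yau (the reduction preserves the Calabi-Yau property). So to conclude $\mathcal{C}' \simeq \mathcal{C}^m_{Q/\alpha}$ it suffices to exhibit an $m$-cluster-tilting object $T$ in $\mathcal{C}'$ whose endomorphism algebra is $K(Q/\alpha)$ and which satisfies the negative Ext-vanishing $\mathrm{Ext}^{-i}_{\mathcal{C}'}(T,T)=0$ for $i\in\{1,\dots,m\}$. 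Since $Q/\alpha$ is again a linear (resp.\ $D$-type) quiver on $n-1$ vertices, $\mathcal{C}^m_{Q/\alpha}$ is exactly the $m$-cluster category attached to that quiver, which is what we want.

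The first step is to identify the candidate object $T$. Fix the initial $(m+2)$-angulation $\Delta_0$ all of whose $m$-diagonals are hung at a chosen vertex, and choose it so that $\alpha$ is one of its $m$-ears (this is possible: an $m$-ear cuts off a single $(m+2)$-gon, and one can complete $\{\alpha\}$ to such a fan). Under the fixed bijection between $m$-diagonals and $m$-rigid objects, $\Delta_0$ corresponds to an $m$-cluster-tilting object $\widehat{T}=X_\alpha\oplus T$ of $\mathcal{C}$, and the almost-complete $m$-rigid object $T=\widehat{T}/X_\alpha$ lies in $\mathcal{U}$ (its summands are noncrossing with $\alpha$, hence by Theorem~\ref{th:cross}, or rather its already-known converse in types $A$ and $D$, have no extensions with $X_\alpha$). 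The image of $T$ in $\mathcal{C}'$ is then an $m$-cluster-tilting object of the reduction, by the Iyama-Yoshino correspondence between $m$-cluster-tilting objects of $\mathcal{C}$ containing $X_\alpha$ and those of $\mathcal{C}'$.

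The second step is to compute $\mathrm{End}_{\mathcal{C}'}(T)$ and check the negative-Ext condition. For the endomorphism algebra: by the remark after Definition~\ref{def:colquiver}, the color-$0$ part of the colored quiver $Q_{\Delta_0}$ is the Gabriel quiver of $\mathrm{End}_{\mathcal{C}}(\widehat{T})$; removing the vertex $\alpha$ from the colored quiver (passing to the reduction removes exactly the row and column indexed by $\alpha$) leaves the colored quiver of $\Delta_0\setminus\{\alpha\}$ viewed in the smaller polygon $P/\alpha$, whose color-$0$ part is the quiver of $\mathrm{End}_{\mathcal{C}'}(T)$; geometrically, deleting an $m$-ear from the fan at vertex $1$ yields precisely the initial fan for the $A_{n-1}$ (resp.\ $D_{n-1}$) polygon, so this quiver is $Q/\alpha$. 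One must also check the algebra is hereditary of the right shape, i.e.\ that there are no relations — this follows because $\widehat T$ is the initial tilting object whose endomorphism algebra is $KQ$ itself, and deleting a source/sink vertex of a path (resp.\ $D$) quiver leaves a path (resp.\ $D$) quiver with path algebra $K(Q/\alpha)$. For the negative Ext-vanishing: $\mathrm{Ext}^{-i}_{\mathcal{C}}(\widehat T,\widehat T)=0$ because $\widehat T$ is induced from a tilting module over the hereditary algebra $KQ$ (negative self-extensions vanish for the canonical tilting object in a cluster category built from a derived category of a hereditary algebra), and Lemma~\ref{lem:shift} lets us transport this to the reduction: for each summand $Y$ of $T$ the shift $\langle -i\rangle$ in $\mathcal{C}'$ agrees with $[-i]$ in $\mathcal{C}$, so $\mathrm{Ext}^{-i}_{\mathcal{C}'}(T,T)=\mathcal{C}'(T,T\langle -i\rangle)$ is a subquotient of $\mathcal{C}(T,T[-i])=\mathrm{Ext}^{-i}_{\mathcal C}(T,T)=0$.

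The main obstacle I expect is the bookkeeping in the second step: matching the colored quiver of the reduction (obtained by deleting a vertex) with the colored quiver of the smaller polygon, and verifying that no relations appear so that the endomorphism algebra is genuinely $K(Q/\alpha)$ and not merely something with that quiver. One has to be slightly careful that $\alpha$ being an $m$-ear means the vertex it indexes is a source or a sink of the color-$0$ quiver, which is what guarantees that deletion does not introduce paths that were previously broken. Once that combinatorial identification is in place, the application of Theorem~\ref{th:kr} is immediate, and the equivalence $\mathcal{C}'\simeq\mathcal{C}^m_{Q/\alpha}$ follows.
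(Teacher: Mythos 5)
Your proposal is correct and follows essentially the same route as the paper: identify the $m$-ear with a summand of the initial (fan) $m$-cluster-tilting object, take $T$ to be the remaining summands viewed in the reduction, verify $\mathrm{End}_{\mathcal{C}'}(T)\simeq K(Q/\alpha)$ and the negative Ext-vanishing via Lemma \ref{lem:shift}, and conclude with the Keller--Reiten theorem \ref{th:kr}. The only difference is cosmetic: you compute the endomorphism quiver through the colored-quiver combinatorics, whereas the paper argues directly in $\mathrm{mod}\,KQ$ using that $X_\alpha=P_1$ with $1$ a source.
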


\begin{proof}
\cor{This lemma is a consequence of theorem \ref{th:kr} of Keller and Reiten, applied to $\mathcal{C}'$. We first notice that this category is Hom-finite, algebraic and $(m+1)$-Calabi-Yau.}

\cor{Let us find an $m$-cluster-tilting object $T$ in $\mathcal{C'}$ such that for any $i \in \{ 1,\cdots,m \}$, we have \[\mathrm{Ext}_\mathcal{C}^{-i}(T,T)=0,\] and $\mathrm{End}(T) \simeq KA_{n-1}$ (respectively $\mathrm{End}(T) \simeq KD_{n-1}$).} We recall that we chose the clockwise convention, it means that we draw the arrows of the quiver of an $(m+2)$-angulation clockwise. For each $i \in \{1,\cdots,n\}$, we introduce $P_i$ the projective object at vertex $i$ of the quiver $Q$.

\cor{As there exists an $m$-ear $\alpha_0$ ending at vertex $1$ of the polygon $P$, correpsonding to the first projective object $P_1$ (since it corresponds to vertex $1$ of the quiver $Q$), we obtain $\alpha$ from $\alpha_0$ by applying a rotation (since they are both $m$-ears). We may thus assume that $X_\alpha=P_1$.} Let \cor{\[T=\bigoplus_{i=2}^nP_i,\]} viewed both as an object in $\mathcal{C}^m_Q$ and of $\mathcal{C'}$. We have that $\mathrm{End}_{\mathcal{C'}}(T) \simeq KA_{n-1}$. Indeed, first, we have that $\mathrm{End}_{\mathrm{mod(KQ)}}(T)=KA_{n-1}$ (respectively $KD_{n-1}$) because $X_\alpha=P_1$ and $1$ is a source in $Q$. \cor{Moreover, the object $T$ is an $m$-cluster-tilting object in $\mathcal{C}'$. Indeed, it has $n-1$ indecomposable summands, and is $m$-rigid (since the vertex corresponding to $\alpha$ has no incident arrows).}

Moreover, from the Lemma of Keller and Reiten in \cite[Section 4]{KR}, we have that \[\forall i \in \{ 1,\cdots,m \}~\mathrm{Ext}_\mathcal{C}^{-i}(T,T)=0.\] From lemma \ref{lem:shift} and the fact that $1$ is a source in $Q$, we then have \[\forall i \in \{ 1,\cdots,m \}~\mathrm{Ext}_\mathcal{C'}^{-i}(T,T)=0.\] Then we have shown the lemma.
\end{proof}

For case $A$ (respectively for case $D$), if $\beta$ is an arc which does not cross $\alpha$, then we can cut along $\alpha$ in order to have two new figures of type $A$ (respectively one of type $D$ and one of type $A$) and one of these contains the same arc as $\beta$, which we still call as $\beta$.

\begin{figure}[h!]
\centering
\begin{tikzpicture}[scale=0.5]
\fill[fill=black,fill opacity=0.1] (0.,4.) -- (-1.78,3.42) -- (-2.87913480366,1.90451239418) -- (-2.87757227419,0.0324019384391) -- (-1.77590924476,-1.48124880383) -- (0.00505645156165,-2.05827669617) -- (1.78505645156,-1.47827669617) -- (2.88419125522,0.0372109096461) -- (2.88262872576,1.90932136539) -- (1.78096569632,3.42297210766) -- cycle;
\draw [fill=black,fill opacity=1.0] (-0.0239151014275,1.18372937641) circle (0.4cm);
\draw (0.,4.)-- (-1.78,3.42);
\draw (-1.78,3.42)-- (-2.87913480366,1.90451239418);
\draw (-2.87913480366,1.90451239418)-- (-2.87757227419,0.0324019384391);
\draw (-2.87757227419,0.0324019384391)-- (-1.77590924476,-1.48124880383);
\draw (-1.77590924476,-1.48124880383)-- (0.00505645156165,-2.05827669617);
\draw (0.00505645156165,-2.05827669617)-- (1.78505645156,-1.47827669617);
\draw (1.78505645156,-1.47827669617)-- (2.88419125522,0.0372109096461);
\draw (2.88419125522,0.0372109096461)-- (2.88262872576,1.90932136539);
\draw (2.88262872576,1.90932136539)-- (1.78096569632,3.42297210766);
\draw (1.78096569632,3.42297210766)-- (0.,4.);
\draw [shift={(-3.91535404058,2.24024452172)}] plot[domain=-0.831370599592:0.204721339077,variable=\t]({1.*5.81780913124*cos(\t r)+0.*5.81780913124*sin(\t r)},{0.*5.81780913124*cos(\t r)+1.*5.81780913124*sin(\t r)});
\end{tikzpicture}
$\to$
\begin{tikzpicture}[scale=0.7]
\draw [fill=black,fill opacity=1.0] (-0.0239151014275,1.18372937641) circle (0.4cm);
\fill[fill=black,fill opacity=0.1] (1.82431930879,2.14541232156) -- (0.321689706987,3.19725304282) -- (-1.43754671613,2.67826187653) -- (-2.12864905208,0.979249756396) -- (-1.23120314596,-0.620392537592) -- (0.578995930349,-0.916101731302) -- (1.93883135106,0.314797229498) -- cycle;
\draw (1.82431930879,2.14541232156)-- (0.321689706987,3.19725304282);
\draw (0.321689706987,3.19725304282)-- (-1.43754671613,2.67826187653);
\draw (-1.43754671613,2.67826187653)-- (-2.12864905208,0.979249756396);
\draw (-2.12864905208,0.979249756396)-- (-1.23120314596,-0.620392537592);
\draw (-1.23120314596,-0.620392537592)-- (0.578995930349,-0.916101731302);
\draw (0.578995930349,-0.916101731302)-- (1.93883135106,0.314797229498);
\draw (1.93883135106,0.314797229498)-- (1.82431930879,2.14541232156);
\end{tikzpicture}
and
\begin{tikzpicture}[scale=0.9]
\fill[fill=black,fill opacity=0.1] (-0.955545454545,3.90348895567) -- (-2.65351690458,2.65630638618) -- (-1.99207782888,0.656038965072) -- (0.114685451432,0.666988281737) -- (0.755297689209,2.67402275269) -- cycle;
\draw (-0.955545454545,3.90348895567)-- (-2.65351690458,2.65630638618);
\draw (-2.65351690458,2.65630638618)-- (-1.99207782888,0.656038965072);
\draw (-1.99207782888,0.656038965072)-- (0.114685451432,0.666988281737);
\draw (0.114685451432,0.666988281737)-- (0.755297689209,2.67402275269);
\draw (0.755297689209,2.67402275269)-- (-0.955545454545,3.90348895567);
\end{tikzpicture}
\end{figure}

\begin{rmk}
\cor{Let $P$ be an $nm+2$-gon (respectively an $mn-m+1$-gon) associated with a quiver $Q$ of type $A_n$ (respectively $D_n$). Let $\alpha$ be an $m$-ear from $i$ to $j$. Then cutting along $\alpha$ corresponds to applying the Iyama-Yoshino reduction of $\mathcal{C}^m_{A_n}$ (respectively $\mathcal{C}^m_{D_n}$) on $X_\alpha$. To be precise, we call by $S/\alpha$ the whole figure where $P$ is replaced by $P'$, with the same sides as $P$ except that the boundary from $i$ to $j$ is replaced by $\alpha$ (then $P'$ becomes a $(n-1)m+2$-gon - respectively an $m(n-1)-m+1$-gon). Then the Iyama Yoshino reduction of $\mathcal{C}^m_{A_n}$ (respectively $\mathcal{C}^m_{D_n}$) on $X_\alpha$ is equivalent to the higher cluster category $\mathcal{C}^m_{A_{n-1}}$ (respectively $\mathcal{C}^m_{D_{n-1}}$).}
\end{rmk}

\begin{lem}\label{lem:cross}
\cor{Let $P$ be an $nm+2$-gon (respectively an $mn-m+1$-gon) associated with a quiver $Q$ of type $A_n$ (respectively $D_n$). Let $\alpha$ be an $m$-ear. Let $\beta$ be an $m$-diagonal which cuts $\alpha$ (see figure \ref{fig:mear}). Let $X_\beta$ be the associated $m$-rigid object. Then there exists $k \in \{1,\cdots,m\}$ such that $\mathrm{Ext}_\mathcal{C}^k(X_\alpha,X_\beta) \neq 0$.}
\end{lem}

\begin{figure}[!h]
\centering
\begin{tikzpicture}[scale=0.9]
\fill[fill=black,fill opacity=0.1] (3.8,4.86) -- (2.62,4.2) -- (1.93,3.04) -- (1.91,1.69) -- (2.57,0.51) -- (3.73,-0.19) -- (5.08,-0.2) -- (6.26,0.46) -- (6.96,1.62) -- (6.97,2.97) -- (6.31,4.15) -- (5.15,4.84) -- cycle;
\draw (3.8,4.86)-- (2.62,4.2);
\draw (2.62,4.2)-- (1.93,3.04);
\draw (1.93,3.04)-- (1.91,1.69);
\draw (1.91,1.69)-- (2.57,0.51);
\draw (2.57,0.51)-- (3.73,-0.19);
\draw (3.73,-0.19)-- (5.08,-0.2);
\draw (5.08,-0.2)-- (6.26,0.46);
\draw (6.26,0.46)-- (6.96,1.62);
\draw (6.96,1.62)-- (6.97,2.97);
\draw (6.97,2.97)-- (6.31,4.15);
\draw (6.31,4.15)-- (5.15,4.84);
\draw (5.15,4.84)-- (3.8,4.86);
\draw (5.08,-0.2)-- (6.97,2.97);
\draw [dash pattern=on 5pt off 5pt] (6.96,1.62)-- (2.64,4.18);
\begin{scriptsize}
\draw[color=black] (5.88,1.14) node {$\alpha$};
\draw[color=black] (5,3.36) node {$\beta$};
\end{scriptsize}
\end{tikzpicture}
\caption{Example of an $m$-ear in case $A$}
\label{fig:mear}
\end{figure}

\begin{proof}
Let $i$ be one end of $\alpha$, the other being $i+m+1$ (as $\alpha$ is an $m$-ear). If $\beta$ crosses $\alpha$, it means that there exists $k \in \{1,\cdots,m-1\}$ such that $k+i$ is an extremity of $\beta$. Then, we can shift $\beta$ $k$ times in order to have $\alpha$ and $\beta$ sharing an extremity. Let us show that there is a morphism from $X_\alpha$ to $X_\beta[k]$.
	
If we take $\beta$, an arc which does not cross $\alpha$. As there is a bijection between the Auslander-Reiten quiver of $Q$ and the translation quiver built in \cite{BM01} for case $A$ and \cite{BM02} for case $D$, the arc $\beta$ corresponds to a unique object $X_\beta$ situated on the Auslander-Reiten quiver of $Q$.
\begin{enumerate}
\item In case $A$: we assume that $\alpha$ is the arc from $1$ to $m+2$ with no loss of generality.

In the Auslander-Reiten quiver of $\mathcal{C}$, the $m$-rigid $X_\alpha$ is situated at the bottom as we can see in the next picture where we identify an arc with the associated object in the higher cluster category. We give the name of the arcs by $D_{1j}$, where the arcs links $1$ to $j$. Moreover, we draw the Hom-hammock in red. \cor{Recall that, in a triangulated category, the Hom-Hammock of an object $A$ is the class of objects $X$ such that $\mathcal{C}(A,X) \neq 0$.}

\[ \scalebox{0.6}{\xymatrix{
~ \ar[dr] & & ~ \ar[dr] & & \color{red} D_{1~5m+2} \ar[dr] & & ~ \ar[dr] & & \\
& ~ \ar[dr] \ar[ur] & & \color{red} D_{1~4m+2} \ar[ur] \ar[dr] & & D_{m+1~5m+2} \ar[dr] \ar[ur] & & ~ \ar[dr] \ar[ur] & \\
\cdots \ar[dr] \ar[ur] & & \color{red} D_{1~3m+2} \ar[ur] \ar[dr] & & D_{m+1~4m+2} \ar[ur] \ar[dr] & & D_{2m+1~5m+2} \ar[dr] \ar[ur] & & \cdots \\
& \color{red} D_{1~2m+2} \ar[ur] \ar[dr] & & D_{m+1~3m+2} \ar[ur] \ar[dr] & & D_{2m+1~4m+2} \ar[ur] \ar[dr] & & D_{3m+1~4m+2} \ar[dr] \ar[ur] & \\
\color{red} X_\alpha=D_{1~m+2} \ar[ur] & & D_{m+1~2m+2} \ar[ur] & & D_{2m+1~3m+2} \ar[ur] & & D_{3m+1~4m+2} \ar[ur] & & D_{4m+1~5m+2}
}}
\]

If we draw the corresponding arcs on the Auslander-Reiten quiver, we realize that the ones on the slice arising from $X_\alpha$ (on the figure, $P_2$, $P_3$, $P_4$, $P_5$) have an extremity equal to $1$. We note moreover that those are all arcs having $1$ as an end. Then $\beta[k]$ belongs to one of them.

It is also known that these objects exactly correspond to the ones which have a nonzero morphism from $X_\alpha$. Then $\mathrm{Ext}_{\mathcal{C}}^k(X_\alpha,X_\beta) \neq 0$.

\item In case $D$:

In the Auslander-Reiten quiver of $\mathcal{C}$, the $m$-rigid $X_\alpha$ is situated at the bottom as we can see in the next picture. We name the diagonals by $D_{ij}$ in the same way as in $A_n$ case. Both particular diagonals are called $B_1^l$ or $B_1^r$. We draw the figure for the case $i=1$ for the sake of simplicity.

\[ \scalebox{0.6}{\xymatrix{
~ \ar[dr] & & ~ \ar[dr] & & \color{red} B_1^l \ar[dr] & & B_3^r \ar[dr] & & \\
~ \ar[r] & ~ \ar[dr] \ar[ur] \ar[r] & ~\ar[r] & \color{red} D_{1~4m+2} \ar[ur] \ar[dr] \ar[r] & \color{red} B_1^r \ar[r] & \color{red} D_{m+1~1} \ar[dr] \ar[ur] \ar[r] & B_3^l \ar[r] & ~ \ar[dr] \ar[ur] \ar[r] & \\
\cdots \ar[dr] \ar[ur] & & \color{red} D_{1~3m+2} \ar[ur] \ar[dr] & & D_{m+1~4m+2} \ar[ur] \ar[dr] & & \color{red} D_{2m+1~1} \ar[dr] \ar[ur] & & \cdots \\
& \color{red} D_{1~2m+2} \ar[ur] \ar[dr] & & D_{m+1~3m+2} \ar[ur] \ar[dr] & & D_{2m+1~4m+2} \ar[ur] \ar[dr] & & \color{red} D_{3m+1~1} \ar[dr] \ar[ur] & \\
\color{red} X_\alpha=D_{1~m+2} \ar[ur] & & D_{m+1~2m+2} \ar[ur] & & D_{2m+1~3m+2} \ar[ur] & & D_{3m+1~4m+2} \ar[ur] & & \color{red}D_{4m+1~1}
}}
\]

The Hom-hammock starting at $X_\alpha$ contains precisely those $X_\gamma$'s for which $\gamma$ contains vertex $1$. Then $\beta[k]$ belongs to one of them.

It is also known that these objects exactly correspond to the ones which have a nonzero morphism from $X_\alpha$. Then $\mathrm{Ext}_{\mathcal{C}}^k(X_\alpha,X_\beta) \neq 0$.
\end{enumerate}
\end{proof}

\begin{lem}
\cor{Let $P$ be an $nm+2$-gon (respectively an $mn-m+1$-gon) associated with a quiver $Q$ of type $A_n$ (respectively $D_n$). Let $\alpha$ be an $m$-ear. Let $\Gamma$ be the set of $m$-diagonals of $P$. Let $S/\alpha$ be the whole figure where $P$ is replaced by $P'$, with the same sides as $P$ except that the boundary from $i$ to $j$ is replaced by $\alpha$ (then $P'$ becomes a $(n-1)m+2$-gon - respectively an $m(n-1)-m+1$-gon).}
	
Then, the following diagram is commutative:
	
\[\xymatrix{\{\beta \in \Gamma\setminus\{\alpha\}, \beta \text{ does not cross } \alpha\} \ar[r] \ar@{<->}[d] & \{X \in \mathcal{U}; X \ncong X_\alpha\}/\simeq \ar@{<->}[d]\\
	S/\alpha \ar[r] & \mathcal{C}'/\simeq}\]
where $\mathcal{C}'$ is the Iyama-Yoshino reduction of $\mathcal{C}_Q^m$. The horizontal arrows are maps sending $\beta$ to $X_\beta$.
\end{lem}

\begin{proof}
\cor{Let $\beta$ be an $m$-diagonal of $P$ which does not cross $\alpha$. In $S/\alpha$, $\beta$ is a simple $m$-diagonal. As seen before, the higher cluster category corresponding to $S/\alpha$ is the Iyama Yoshino reduction of $\mathcal{C}_Q^m$, which is $\mathcal{C}'$.}
	
\cor{Now, let $X_\beta$ be the $m$-rigid indecomposable object associated with $\beta$. From the previous Lemma, the object $X_\beta$ lies in $\{X \in \mathcal{U}; X \ncong X_\alpha\}$. Now, it suffices to take the quotient under $X_\alpha$, and this is exactly the Iyama Yoshino reduction of $\mathcal{C}_Q^m$, which is $\mathcal{C}'$.}

This shows that the diagram is commutative.
\end{proof}

\begin{rmk}\label{rmk:shift}
\cor{We need to note that the cases are symmetric. Indeed, since the higher cluster category is $(m+1)$-Calabi-Yau, we know that $\mathcal{C}(\beta[k-(m+1)],\alpha) \simeq D\mathcal{C}(\alpha,\beta[k])$. This means that a morphism from $\alpha$ to $\beta[k]$ is in $1-1$-correspondence with a morphism from $\beta[k-(m+1)]$ to $\alpha$. Thus, shifting $\beta$, $k$ times is the same as shifting $\alpha$, $k$ times. This means no matter which vertex we shift.}
\end{rmk}

Let us now prove theorem \ref{th:cross}.

\begin{proof}[Proof of theorem \ref{th:cross}]
\cor{Let us suppose that $\alpha$ and $\beta$ cross each other. If $\alpha$ is an $m$-ear, then the result is already shown in Lemma \ref{lem:cross}.}

\cor{Else, let $i_\alpha$ and $j_\alpha$ (respectively $i_\beta$ and $j_\beta$) be the extremities of $\alpha$ (respectively $\beta$). For sake of clarity, we suppose that we choose $i_\alpha$ and $i_\beta$ such that $|i_\alpha-i_\beta|<|i_\alpha-j_\beta|$ and $|i_\alpha-i_\beta|<|j_\alpha-i_\beta|$ and $|i_\alpha-i_\beta|<|j_\alpha-j_\beta|$.}

\begin{itemize}
\item If $k=|i_\alpha-i_\beta| \leq m$, then the $m$-diagonal $\beta[k]$ ends in $i_\alpha$. We have seen in proof of Lemma \ref{lem:cross} that this means that $\beta[k]$ is situated on the Hom-hammock of $X_\alpha$. Then $\mathrm{Ext}_{\mathcal{C}}^k(X_\alpha,X_\beta) \neq 0$.

Note that this case happens whatever the arcs when $n=4$.
\item Else, we have $|i_\alpha-i_\beta| \geq m+1$. We proceed by induction. The case $n=4$ is already treated.

\cor{Suppose that the result is shown for a given $n$. Then we can draw $\gamma$, an $m$-ear from $i_\alpha$ to $i_\beta$ which does not cross neither $\alpha$ nor $\beta$. We decide to cut along this arc $\gamma$, as we did in lemma \ref{lem:cutalong}. We are now, from the previous Lemma, in a case of size $n-1$, and we can apply the induction hypothesis: there exists some $k \in \{1,\cdots,m\}$ such that $\mathrm{Ext}_{\mathcal{C'}}^k(X_\alpha,X_\beta) \neq 0$. From Iyama-Yoshino, we have that $\mathcal{C}(X,Y[i])$ and $\mathcal{C'}(X,Y\langle i \rangle)$ are isomorphic. This finishes the proof of the theorem.}
\end{itemize}
\end{proof}

\subsection{Case $\tilde{A}$}

We set the figure for the geometric realization of type $\tilde{A}$. We let $p$ and $q$ be two integers. We are going to show the result, which will be available for any $p$ and $q$. Let $O$ be the outer polygon with $mp$ sides, and $I$ be the inner polygon with $mq$ sides. In this subsection, we will use the same sketch of proof. Let us now define the notion of an $m$-ear:

\begin{defi}
Let $\alpha$ be an $m$-diagonal. Then $\alpha$ is an $m$-ear if it lies in the outer or inner polygon, and links a vertex $i$ to $i+m+1$, and is homotopic to the boundary path (see figure \ref{fig:mearatilde} for an example of $m$-ear).
\end{defi}

\begin{figure}[!h]
\centering
\begin{tikzpicture}[scale=0.9]
\fill[fill=black,fill opacity=0.1] (3.8,4.86) -- (2.62,4.2) -- (1.93,3.04) -- (1.91,1.69) -- (2.57,0.51) -- (3.73,-0.19) -- (5.08,-0.2) -- (6.26,0.46) -- (6.96,1.62) -- (6.97,2.97) -- (6.31,4.15) -- (5.15,4.84) -- cycle;
\fill[fill=black,fill opacity=0.1] (4.96,3.18) -- (3.9,3.18) -- (3.37,2.26) -- (3.9,1.34) -- (4.96,1.34) -- (5.49,2.26) -- cycle;
\draw (3.8,4.86)-- (2.62,4.2);
\draw (2.62,4.2)-- (1.93,3.04);
\draw (1.93,3.04)-- (1.91,1.69);
\draw (1.91,1.69)-- (2.57,0.51);
\draw (2.57,0.51)-- (3.73,-0.19);
\draw (3.73,-0.19)-- (5.08,-0.2);
\draw (5.08,-0.2)-- (6.26,0.46);
\draw (6.26,0.46)-- (6.96,1.62);
\draw (6.96,1.62)-- (6.97,2.97);
\draw (6.97,2.97)-- (6.31,4.15);
\draw (6.31,4.15)-- (5.15,4.84);
\draw (5.15,4.84)-- (3.8,4.86);
\draw (5.08,-0.2)-- (6.97,2.97);
\draw (4.96,3.18)-- (3.9,3.18);
\draw (3.9,3.18)-- (3.37,2.26);
\draw (3.37,2.26)-- (3.9,1.34);
\draw (3.9,1.34)-- (4.96,1.34);
\draw (4.96,1.34)-- (5.49,2.26);
\draw (5.49,2.26)-- (4.96,3.18);
\begin{scriptsize}
\draw[color=black] (6,1) node {$\alpha$};
\end{scriptsize}
\end{tikzpicture}
\caption{Example of an $m$-ear in case $\tilde{A}$}
\label{fig:mearatilde}
\end{figure}

\cor{Before setting the next Lemma, we recall that the transjective component of the Auslander-Reiten quiver of a higher cluster category exactly contains the injective and projective components of the Auslander-Reiten quiver, which means, every object except those from the tubes.}

\begin{lem}\label{lem:atilde}
\cor{Let $\mathcal{C}$ be the $m$-cluster category of type $\tilde{A}_n$. Let $\alpha$ be an $m$-diagonal which is either an $m$-ear or links a vertex of the outer polygon to a vertex of the inner polygon (this means that the $m$-rigid indecomposable object $X_\alpha$ lies in the transjective component of the Auslander-Reiten quiver of $\mathcal{C}$). Let \[\mathcal{U}=\{ Y \in \mathcal{C},~\forall i \in \{1,\cdots,m\},~\mathrm{Ext}_\mathcal{C}^i(X_\alpha,Y)=0\}\] be a subcategory of $\mathcal{C}$. Let $\mathcal{C'}$ be the Iyama-Yoshino reduction of $\mathcal{C}$ defined in Theorem \ref{th:iy}: $\mathcal{C'}=\mathcal{U}/(X_\alpha)$. Then, we have an equivalence of categories :
\[ \mathcal{C'} \simeq \mathcal{C}^{m}_{Q/\alpha} \]
where $Q/\alpha$ is the quiver obtained from $Q$ by removing $\alpha$ and all incident arrows.} 
\end{lem}

\begin{rmk}
\cor{We can illustrate by an example our strategy:}

We focus on Gabriel quivers. The mutation at vertex $1$ leads to the following quiver:

\[ \xymatrix@1{
	& 1 \ar[dl] & 3 \ar[r] \ar[l] & 5 \ar[dr] & \\
	0 \ar[urr] \ar[dr] & & & & 7 \\
	& 2 \ar[r] & 4 \ar[r] & 6 \ar[ur] & } \]

Using the Iyama-Yoshino reduction at vertex $1$ corresponds to forgetting this vertex and all incident arrows. By doing this, we are ensured to be reduced to a quiver of type $\tilde{A}_{n-1}$:

\[ \xymatrix@1{
	& & 3 \ar[r] & 5 \ar[dr] & \\
	0 \ar[urr] \ar[dr] & & & & 7 \\
	& 2 \ar[r] & 4 \ar[r] & 6 \ar[ur] & } \]
\end{rmk}

\begin{proof}
Let us start by recalling the distribution of simple module in the Auslander-Reiten quiver of a higher cluster category of type $\tilde{A}$.

\cor{The Auslander Reiten quiver of $\mathcal{C}_{\tilde{A}_n}$ is made of a transjective part (which contains the injective part and projective part), plus homogeneous tubes (whoch do not focus on) and two tubes, one of size $p$, one of size $q$.}

\cor{We note that the $m$-diagonals linking the outer polygon to the inner polygon correspond to objects in the transjective part of the Auslander-Reiten quiver.}

\cor{Moreover, the arcs linking two vertices of the outer polygon correspond to objects in the tube $\mathcal{T}_p$ of size $p$, whereas the arcs linking to vertices of the inner polygon correspond to objects in the tube $\mathcal{T}_q$ of size $q$.}

\cor{The simple objects figure at the bottom of the tubes $\mathcal{T}_p$ and $\mathcal{T}_q$.}

We can now start the proof of Lemma \ref{lem:atilde}.

There are two different cases.

First, if $\alpha$ is an $m$-ear:

Here again, we use the theorem of Keller and Reiten in \cite{KR}, as in type $A$ and $D$. We have to find an $m$-cluster-tilting object $T$ such that $\mathrm{End}_{\mathcal{C'}}(T) \simeq K \tilde{A}_{n-1}$, and $\forall i \in \{1,\cdots,m\} \mathrm{Ext}_\mathcal{C'}^{-i}(T,T)=0$.

\cor{Let $T=\oplus P_i$. We know from Torkildsen (see figure 8 in \cite{Tor}), that $T$ corresponds summand by summand, to the initial $(m+2)$-angulation. This one is made of $q+1$ $m$-diagonals linking the vertex $1$ of the external polygon to the vertices $1+km$, for $k \in \{0,\cdots,q-1\}$ plus an $m$-diagonal linking the vertex $1$ of the outer polygon, and the vertex $1$ of the inner polygon, but going around it. To complete the $(m+2)$-angulation, we add $p-1$ $m$-diagonals linking the vertex $1$ of the inner polygon $T$ to vertices of the outer polygon $P$, numbered $1+km$, $k \in \{1,\cdots,p-1\}$.}

\begin{figure}[!h]
\centering
\begin{tikzpicture}[scale=0.9]
\fill[fill=black,fill opacity=0.1] (3.8,4.86) -- (2.62,4.2) -- (1.93,3.04) -- (1.91,1.69) -- (2.57,0.51) -- (3.73,-0.19) -- (5.08,-0.2) -- (6.26,0.46) -- (6.96,1.62) -- (6.97,2.97) -- (6.31,4.15) -- (5.15,4.84) -- cycle;
\fill[fill=black,fill opacity=0.1] (4.96,3.18) -- (3.9,3.18) -- (3.37,2.26) -- (3.9,1.34) -- (4.96,1.34) -- (5.49,2.26) -- cycle;
\draw (3.8,4.86)-- (2.62,4.2);
\draw (2.62,4.2)-- (1.93,3.04);
\draw (1.93,3.04)-- (1.91,1.69);
\draw (1.91,1.69)-- (2.57,0.51);
\draw (2.57,0.51)-- (3.73,-0.19);
\draw (3.73,-0.19)-- (5.08,-0.2);
\draw (5.08,-0.2)-- (6.26,0.46);
\draw (6.26,0.46)-- (6.96,1.62);
\draw (6.96,1.62)-- (6.97,2.97);
\draw (6.97,2.97)-- (6.31,4.15);
\draw (6.31,4.15)-- (5.15,4.84);
\draw (5.15,4.84)-- (3.8,4.86);
\draw (4.96,3.18)-- (3.9,3.18);
\draw (3.9,3.18)-- (3.37,2.26);
\draw (3.37,2.26)-- (3.9,1.34);
\draw (3.9,1.34)-- (4.96,1.34);
\draw (4.96,1.34)-- (5.49,2.26);
\draw (5.49,2.26)-- (4.96,3.18);
\draw (6.26,0.46)-- (4.96,1.34);
\draw (4.96,1.34)-- (3.73,-0.19);
\draw [shift={(3.54,2.43)}] plot[domain=3.57:5.63,variable=\t]({1*1.79*cos(\t r)+0*1.79*sin(\t r)},{0*1.79*cos(\t r)+1*1.79*sin(\t r)});
\draw [shift={(5.14,4.44)}] plot[domain=-1.16:0.84,variable=\t]({-0.56*4*cos(\t r)+0.83*2.13*sin(\t r)},{-0.83*4*cos(\t r)+-0.56*2.13*sin(\t r)});
\draw [shift={(4.41,2.97)}] plot[domain=-2.59:1.24,variable=\t]({-0.73*2.13*cos(\t r)+0.68*1.66*sin(\t r)},{-0.68*2.13*cos(\t r)+-0.73*1.66*sin(\t r)});
\draw [shift={(4.31,2.66)}] plot[domain=-2.73:1.13,variable=\t]({-0.59*1.58*cos(\t r)+0.8*1.44*sin(\t r)},{-0.8*1.58*cos(\t r)+-0.59*1.44*sin(\t r)});
\draw [shift={(3.58,-0.75)}] plot[domain=0.83:1.35,variable=\t]({1*5.03*cos(\t r)+0*5.03*sin(\t r)},{0*5.03*cos(\t r)+1*5.03*sin(\t r)});
\draw [shift={(4.51,2.57)}] plot[domain=-2.46:1.54,variable=\t]({-0.93*1.48*cos(\t r)+0.37*1.31*sin(\t r)},{-0.37*1.48*cos(\t r)+-0.93*1.31*sin(\t r)});
\draw [shift={(4.64,1.77)}] plot[domain=-0.68:1.26,variable=\t]({1*2.09*cos(\t r)+0*2.09*sin(\t r)},{0*2.09*cos(\t r)+1*2.09*sin(\t r)});
\draw [shift={(2.99,0.56)}] plot[domain=-0.03:0.93,variable=\t]({1*3.27*cos(\t r)+0*3.27*sin(\t r)},{0*3.27*cos(\t r)+1*3.27*sin(\t r)});
\draw [shift={(4.88,1.57)}] plot[domain=-2.27:1.1,variable=\t]({-0.17*2.07*cos(\t r)+-0.99*1.54*sin(\t r)},{0.99*2.07*cos(\t r)+-0.17*1.54*sin(\t r)});
\end{tikzpicture}
\caption{The "initial" $(m+2)$-angulation of type $\tilde{A}$, for $m=2$, $p=6$, $q=3$.}
\label{fig:manginitatilde}
\end{figure}

Let $T'$ be the mutation of $T$ at $P_1$ the first preprojective module. Then \[T'=\oplus_{i \neq 1} P_i \oplus X\] is also an $m$-cluster-tilting object. Let us show that $\tau X$ corresponds to the simple module at the base of the tube of size $p$ (see figure \ref{fig:mcto} to visualize the mutation in terms of arcs). However, we do not know that the mutation of $m$-cluster tilting objects corresponds to the flip of $(m+2)$-angulations yet.

\begin{figure}[!h]
\centering
\begin{tikzpicture}[scale=0.7]
\fill[fill=black,fill opacity=0.1] (3.8,4.86) -- (2.62,4.2) -- (1.93,3.04) -- (1.91,1.69) -- (2.57,0.51) -- (3.73,-0.19) -- (5.08,-0.2) -- (6.26,0.46) -- (6.96,1.62) -- (6.97,2.97) -- (6.31,4.15) -- (5.15,4.84) -- cycle;
\fill[fill=black,fill opacity=0.1] (4.96,3.18) -- (3.9,3.18) -- (3.37,2.26) -- (3.9,1.34) -- (4.96,1.34) -- (5.49,2.26) -- cycle;
\draw (3.8,4.86)-- (2.62,4.2);
\draw (2.62,4.2)-- (1.93,3.04);
\draw (1.93,3.04)-- (1.91,1.69);
\draw (1.91,1.69)-- (2.57,0.51);
\draw (2.57,0.51)-- (3.73,-0.19);
\draw (3.73,-0.19)-- (5.08,-0.2);
\draw (5.08,-0.2)-- (6.26,0.46);
\draw (6.26,0.46)-- (6.96,1.62);
\draw (6.96,1.62)-- (6.97,2.97);
\draw (6.97,2.97)-- (6.31,4.15);
\draw (6.31,4.15)-- (5.15,4.84);
\draw (5.15,4.84)-- (3.8,4.86);
\draw (4.96,3.18)-- (3.9,3.18);
\draw (3.9,3.18)-- (3.37,2.26);
\draw (3.37,2.26)-- (3.9,1.34);
\draw (3.9,1.34)-- (4.96,1.34);
\draw (4.96,1.34)-- (5.49,2.26);
\draw (5.49,2.26)-- (4.96,3.18);
\draw (6.26,0.46)-- (4.96,1.34);
\draw (4.96,1.34)-- (3.73,-0.19);
\draw [shift={(3.54,2.43)}, color=red] plot[domain=3.57:5.63,variable=\t]({1*1.79*cos(\t r)+0*1.79*sin(\t r)},{0*1.79*cos(\t r)+1*1.79*sin(\t r)});
\draw [shift={(5.14,4.44)}] plot[domain=-1.16:0.84,variable=\t]({-0.56*4*cos(\t r)+0.83*2.13*sin(\t r)},{-0.83*4*cos(\t r)+-0.56*2.13*sin(\t r)});
\draw [shift={(4.41,2.97)}] plot[domain=-2.59:1.24,variable=\t]({-0.73*2.13*cos(\t r)+0.68*1.66*sin(\t r)},{-0.68*2.13*cos(\t r)+-0.73*1.66*sin(\t r)});
\draw [shift={(4.31,2.66)}] plot[domain=-2.73:1.13,variable=\t]({-0.59*1.58*cos(\t r)+0.8*1.44*sin(\t r)},{-0.8*1.58*cos(\t r)+-0.59*1.44*sin(\t r)});
\draw [shift={(3.58,-0.75)}] plot[domain=0.83:1.35,variable=\t]({1*5.03*cos(\t r)+0*5.03*sin(\t r)},{0*5.03*cos(\t r)+1*5.03*sin(\t r)});
\draw [shift={(4.51,2.57)}] plot[domain=-2.46:1.54,variable=\t]({-0.93*1.48*cos(\t r)+0.37*1.31*sin(\t r)},{-0.37*1.48*cos(\t r)+-0.93*1.31*sin(\t r)});
\draw [shift={(4.64,1.77)}] plot[domain=-0.68:1.26,variable=\t]({1*2.09*cos(\t r)+0*2.09*sin(\t r)},{0*2.09*cos(\t r)+1*2.09*sin(\t r)});
\draw [shift={(2.99,0.56)}] plot[domain=-0.03:0.93,variable=\t]({1*3.27*cos(\t r)+0*3.27*sin(\t r)},{0*3.27*cos(\t r)+1*3.27*sin(\t r)});
\draw [shift={(4.88,1.57)}] plot[domain=-2.27:1.1,variable=\t]({-0.17*2.07*cos(\t r)+-0.99*1.54*sin(\t r)},{0.99*2.07*cos(\t r)+-0.17*1.54*sin(\t r)});
\end{tikzpicture}
$\longrightarrow$
\begin{tikzpicture}[scale=0.7]
\fill[fill=black,fill opacity=0.1] (3.8,4.86) -- (2.62,4.2) -- (1.93,3.04) -- (1.91,1.69) -- (2.57,0.51) -- (3.73,-0.19) -- (5.08,-0.2) -- (6.26,0.46) -- (6.96,1.62) -- (6.97,2.97) -- (6.31,4.15) -- (5.15,4.84) -- cycle;
\fill[fill=black,fill opacity=0.1] (4.96,3.18) -- (3.9,3.18) -- (3.37,2.26) -- (3.9,1.34) -- (4.96,1.34) -- (5.49,2.26) -- cycle;
\draw (3.8,4.86)-- (2.62,4.2);
\draw (2.62,4.2)-- (1.93,3.04);
\draw (1.93,3.04)-- (1.91,1.69);
\draw (1.91,1.69)-- (2.57,0.51);
\draw (2.57,0.51)-- (3.73,-0.19);
\draw (3.73,-0.19)-- (5.08,-0.2);
\draw (5.08,-0.2)-- (6.26,0.46);
\draw (6.26,0.46)-- (6.96,1.62);
\draw (6.96,1.62)-- (6.97,2.97);
\draw (6.97,2.97)-- (6.31,4.15);
\draw (6.31,4.15)-- (5.15,4.84);
\draw (5.15,4.84)-- (3.8,4.86);
\draw (4.96,3.18)-- (3.9,3.18);
\draw (3.9,3.18)-- (3.37,2.26);
\draw (3.37,2.26)-- (3.9,1.34);
\draw (3.9,1.34)-- (4.96,1.34);
\draw (4.96,1.34)-- (5.49,2.26);
\draw (5.49,2.26)-- (4.96,3.18);
\draw (6.26,0.46)-- (4.96,1.34);
\draw (4.96,1.34)-- (3.73,-0.19);
\draw [shift={(5.14,4.44)}] plot[domain=-1.16:0.84,variable=\t]({-0.56*4*cos(\t r)+0.83*2.13*sin(\t r)},{-0.83*4*cos(\t r)+-0.56*2.13*sin(\t r)});
\draw [shift={(4.41,2.97)}] plot[domain=-2.59:1.24,variable=\t]({-0.73*2.13*cos(\t r)+0.68*1.66*sin(\t r)},{-0.68*2.13*cos(\t r)+-0.73*1.66*sin(\t r)});
\draw [shift={(4.31,2.66)}] plot[domain=-2.73:1.13,variable=\t]({-0.59*1.58*cos(\t r)+0.8*1.44*sin(\t r)},{-0.8*1.58*cos(\t r)+-0.59*1.44*sin(\t r)});
\draw [shift={(3.58,-0.75)}] plot[domain=0.83:1.35,variable=\t]({1*5.03*cos(\t r)+0*5.03*sin(\t r)},{0*5.03*cos(\t r)+1*5.03*sin(\t r)});
\draw [shift={(4.51,2.57)}] plot[domain=-2.46:1.54,variable=\t]({-0.93*1.48*cos(\t r)+0.37*1.31*sin(\t r)},{-0.37*1.48*cos(\t r)+-0.93*1.31*sin(\t r)});
\draw [shift={(4.64,1.77)}] plot[domain=-0.68:1.26,variable=\t]({1*2.09*cos(\t r)+0*2.09*sin(\t r)},{0*2.09*cos(\t r)+1*2.09*sin(\t r)});
\draw [shift={(2.99,0.56)}] plot[domain=-0.03:0.93,variable=\t]({1*3.27*cos(\t r)+0*3.27*sin(\t r)},{0*3.27*cos(\t r)+1*3.27*sin(\t r)});
\draw [shift={(4.88,1.57)}] plot[domain=-2.27:1.1,variable=\t]({-0.17*2.07*cos(\t r)+-0.99*1.54*sin(\t r)},{0.99*2.07*cos(\t r)+-0.17*1.54*sin(\t r)});
\draw [shift={(5.13,2.72)},color=red]  plot[domain=3.04:4.26,variable=\t]({1*3.22*cos(\t r)+0*3.22*sin(\t r)},{0*3.22*cos(\t r)+1*3.22*sin(\t r)});
\end{tikzpicture}
\caption{Geometric visualization of $T'$}
\label{fig:mcto}
\end{figure}

We have to show that $\tau X=S_1$, the simple module at vertex $1$, which is situated at the bottom of the tube of size $n-2$.

Let us find $\mathcal{C}(X[-1],T)$. For all $i \neq 1$, $\mathcal{C}(X[-1],P_i) \simeq \mathcal{C}(X,P_i[1])=0$ since $T'$ is an $m$-cluster-tilting object.

Now we focus on $\mathcal{C}(X[-1],P_1)$.
We have that $\mathcal{C}(X[-1],P_1) \simeq \mathcal{C}(X,P_1[1]) \simeq K$ from Iyama and Yoshino at \cor{Corollary 6.4} in \cite{IY}.

\cor{From the Auslander-Reiten duality, we have:}

\[ \mathcal{C}(X[-1],T) \simeq \mathcal{C}(X,T[1]) \simeq D\mathcal{C}(T,\tau X) \]

Then $\tau X$ is an object at the bottom of the tube $\mathcal{T}_p$, situated on the left. It exactly corresponds to $\simeq S_1$. Moreover, $X=X_\alpha$ corresponds to the $m$-ear starting at vertex $1$ (which corresponds to the red arc), named $\alpha$. Indeed, if we apply a translation to $\alpha$, which corresponds to shift it $m$ times, as the higher cluster category is $(m+1)$-Calabi-Yau, we find back the arc corresponding to the simple object $S_1$.

Furthermore, from the paper of Baur and Torkildsen \cite{BauTor}, we can easily visualize the morphisms in the module category of type $\tilde{A}$.

We have $\mathrm{End}_{\mathcal{C'}}(T') \simeq K\tilde{A}_{n-1}$. Indeed, in the module category, we have $\mathrm{End}_\mathrm{mod}(T') \simeq K\tilde{A}_{n-1}$, because, the objects of $T'$ (apart from $X_\alpha$) are on the projective slice of the Auslander-Reiten quiver of $Q$.

Now we show this for the higher cluster category. We have the following decomposition of morphisms ($G$ is the functor $\tau^{-1}[m]$):
\[ \mathcal{C}(M,N) \simeq \bigoplus_{i \in \mathbb{Z}} \mathcal{D}^b(G^iM,N). \]

If $m=1$, the result is already known, because if $X$ is a preprojective object and $Y$ a regular one, then $\mathrm{Ext}^1_\mathcal{C}(X,Y)=0$.

If not, we use the decomposition just above. We have that \[\mathcal{D}^b(\tau^{-1}[m]T,T)=D\mathcal{D}^b(T,T[m+1])\] thanks to the Auslander-Reiten duality. From the book \cite{ASS}, the algebra of a quiver of type $\tilde{A}$ is hereditary and then the extension $\mathrm{Ext}^2_{\mathcal{D}^b}(T,T)=0$. Then, for $i \geq 1$, all the terms $\mathcal{D}^b(G^iM,N)$ of the sum are zero. Then \[\mathrm{End}_{\mathcal{C}}(T') \simeq \mathrm{End}_{\mathcal{D}^b}(T').\]

\cor{As no morphism is incident to $\alpha$, we can apply the Iyama-Yoshino reduction at object $X_\alpha$ and the results still hold.}

It finally remains to prove that $\mathrm{Ext}_\mathcal{C'}^{-i}(T',T')=0$ for all $i \in \{1,m-1 \}$.

Let us first show that $\mathrm{Ext}_\mathcal{C}^{-i}(T',T')=0$, using the shift in $\mathcal{C}$. Then we will use lemma \ref{lem:shift} in order to conclude.

\cor{We claim that $\mathcal{C}(T',T'[-i])=0$. Indeed, from Keller and Reiten at Lemma 4.1 in \cite{KR}, we know that $\mathcal{C}(\oplus_{j \neq k}P_j,T'[-i])=0$. Moreover, $\mathcal{C}(X_\alpha,X_\alpha[-i])=0$ in the module category (which from Wraalsen \cite{W} or Zhou-Zhu \cite{ZZ}, immediately translates to the higher cluster category). In addition, there cannot be any morphism from a tubular component to a preprojective one. Then $\mathcal{C}(X_\alpha[-i],P_j)=0$ in the module category (which from Wraalsen \cite{W} or Zhou-Zhu \cite{ZZ}, immediately translates to the higher cluster category). It remains to show that $\mathcal{C}(P_j,X_\alpha[-i])=0$ for any $j \neq k$ and $i \in \{ 1,\cdots,m-1 \}$.}
	


For $i \neq 1$, there is no morphism from $P_j$ to $P_k[-i+1]$.

Then, there exists $g:P_j \to U[-i]$. As $U$ is only composed with projectives which are not $P_k$, this shows that $g=0$. Then $f=0$.


For $i=1$, the composition $P_j \to X_\alpha[-1] \to P_k$ is zero because there is no morphism from tubular objects to preprojective objects. Then there exists $g$ such as previously, but the composition with $U[-i] \to X_\alpha[-i]$ is zero for the same reason. Then $f=0$.

This shows that $\mathrm{Ext}_\mathcal{C}^{-i}(T',T')=0$.

%
%
%
%
%

From lemma \ref{lem:shift}, we have that $T'_l<-i> \simeq T'_l[-i]$. Finally,
\[ \mathrm{Ext}_\mathcal{C'}^{-i}(T',T')=0. \]

We now have gathered all the information in order to apply the theorem of Keller and Reiten, and we have that \[ \mathcal{C'} \simeq \mathcal{C}^{m}_{Q/\alpha}. \]

Now, if $\alpha$ corresponds to a transjective module, say that $\alpha$ links vertex $1$ of $P$ to vertex $1$ of $T$. We proceed the same way, we have that $\mathrm{Ext}^{-i}_\mathcal{C}(T',T')=0$ from Keller and Reiten \cite[Lemma 4.1]{KR}, and we can apply Keller-Reiten theorem. In details, let $T$ be the $m$-cluster-tilting object corresponding to a slice of the Auslander-Reiten quiver of $Q$ (see the article of Baur and Torkildsen \cite{BauTor} for details). Furthermore, Baur and Torkildsen in \cite[Proposition 3.7]{BauTor} showed that there was an isomorphism between the Auslander-Reiten quiver of $Q$ (except the homogeneous tubes) and the translation quiver $\Gamma$ built by themselves at paragraph 3.5. In this quiver Gamma, the $m$-diagonals compose the vertices and they are linked by elementary moves. Now it suffices to take the same object $T$ as before (the $m$-cluster-tilting object corresponding to the initial $(m+2)$-angulation), to change the object $X_\alpha \in T$ into its mutation exactly the same way we did before, and we turn to have \[\mathrm{End}_{\mathrm{mod}}(T) \simeq KA_{n-1}.\]
\end{proof}

\begin{rmk}
\cor{We set the annulus with an outer polygon with $mp$ sides, and an inner polygon with $mq$ sides, associated with a quiver $Q$ of type $\tilde{A}_n$. Let $\alpha$ be an $m$-ear from $i$ to $j$. Then cutting along $\alpha$ corresponds to applying the Iyama-Yoshino reduction of $\mathcal{C}^m_{\tilde{A}_n}$ on $X_\alpha$. To be precise, we call by $S/\alpha$ the whole figure where $P$ is replaced by $P'$, with the same sides as $P$ except that the boundary from $i$ to $j$ is replaced by $\alpha$ (then $P'$ becomes a $(n-1)m+2$-gon - respectively an $m(n-1)-m+1$-gon). Then the Iyama Yoshino reduction of $\mathcal{C}^m_{\tilde{A}_n}$ on $X_\alpha$ is equivalent to the higher cluster category $\mathcal{C}^m_{\tilde{A}_{n-1}}$.}
\end{rmk}

\begin{lem}\label{lem:crossatilde}
\cor{Let $P$ be an annulus associated with a quiver $Q$ of type $\tilde{A}_n$. Let $\alpha$ be an $m$-ear. Let $\beta$ be an $m$-diagonal which cuts $\alpha$ (see figure \ref{fig:mear}). Let $X_\beta$ be the associated $m$-rigid object. Then there exists $k \in \{1,\cdots,m\}$ such that $\mathrm{Ext}_\mathcal{C}^k(X_\alpha,X_\beta) \neq 0$.}
\end{lem}


\begin{rmk}
\cor{We need to note that the cases are symmetric. Indeed, since the higher cluster category is $(m+1)$-Calabi-Yau, we know that $\mathcal{C}(\beta[k-(m+1)],\alpha) \simeq D\mathcal{C}(\alpha,\beta[k])$. This means that a morphism from $\alpha$ to $\beta[k]$ is in $1-1$-correspondence with a morphism from $\beta[k-(m+1)]$ to $\alpha$. Thus, shifting $\beta$, $k$ times is the same as shifting $\alpha$, $k$ times. This means no matter which vertex we shift.}
\end{rmk}

\begin{proof}
\cor{Let $X_\alpha$ be the $m$-rigid indecomposable object associated with $\alpha$. By the geometric realization of Torkildsen in \cite{Tor}, the $m$-diagonal $\alpha$ is situated at the bottom of one tube. Without any loss of generality, we suppose that $\alpha$ links $i$ and $i+m$ in the outer polygon, and that its corresponding object appears in one tube of size $p$.}

\cor{Let $\beta$ be an $m$-diagonal crossing $\alpha$. It means that there exists $k \in \{1,\cdots,m-1\}$, such that an extremity of $\beta$ is the vertex $i+k$ in the outer polygon. Therefore, it suffices to shift $\beta$ $k<m$ times, so that one extremity of $\beta[k]$ is $i$, common to $\alpha$. There are two cases:}

First case: $\beta$ corresponds to an $m$-rigid indecomposable object $X_\beta$ in a tube. Then, by the proof of Proposition 7.2 in \cite{Tor}, there exists a nonzero morphism from $X_\alpha$ to $X_\beta[k]$ (see figure 13 of the article for a clear picture of this map).

Second case: The object $X_\beta$ is a preinjective indecomposable object. Then, by the paragraph 4.1 of the article written by Baur and Torkildsen \cite{BauTor}, as $\alpha$ and $\beta[k]$ share an oriented angle, there is a so-called "long move", hence a nonzero morphism in the module category from $X_\alpha$ to $X_\beta[k]$.

\cor{In any case, we have \[\mathrm{Ext}_\mathcal{C}^k(X_\alpha,X_\beta) \neq 0.\]
Indeed, we have found a nonzero morphism in the module category, then in the higher cluster category from $X_\alpha$ to $X_\beta[k]$.}
\end{proof}

Then the arcs which cross $\alpha$ exactly correspond to the $m$-rigid indecomposable objects which do not lie in \[\mathcal{U}=\{ Y \in \mathcal{C},~\forall i \in \{1,\cdots,m\},~\mathrm{Ext}_\mathcal{C}^i(X_\alpha,Y)=0\}.\]

We are now able to prove theorem \ref{th:cross}:

\begin{proof}[Proof of theorem \ref{th:cross}]
If $\alpha$ and $\beta$ are two crossing $m$-diagonals in the geometric realization of a quiver of type $\tilde{A}$ (an external polygon $P$ with $p$ sides together with an internal polygon $T$ with $q$ sides). There are two cases:

\begin{enumerate}
\item First case: The $m$-diagonal $\alpha$ links two vertices $i$ and $j$ of $P$ (or $T$), and is homotopic to the boundary path. If $\alpha$ is an $m$-ear, then the result is shown. \cor{Else, let $i_\alpha$ and $j_\alpha$ (respectively $i_\beta$ and $j_\beta$) be the extremities of $\alpha$ (respectively $\beta$). For sake of clarity, we suppose that we choose $i_\alpha$ and $i_\beta$ such that $|i_\alpha-i_\beta|<|i_\alpha-j_\beta|$ and $|i_\alpha-i_\beta|<|j_\alpha-i_\beta|$ and $|i_\alpha-i_\beta|<|j_\alpha-j_\beta|$.}
\begin{itemize}
\item If $k=|i_\alpha-i_\beta| \leq m$, then the $m$-diagonal $\beta[k]$ ends in $i_\alpha$. We have seen in proof of Lemma \ref{lem:crossatilde} that this means that $\mathrm{Ext}_{\mathcal{C}}^k(X_\alpha,X_\beta) \neq 0$.

\item Else, we have $|i_\alpha-i_\beta| \geq m+1$. We proceed by induction. The case $n=4$ is easy to treat.

\cor{Suppose that the result is shown for a given $n$. Then we can draw $\gamma$, an $m$-ear from $i_\alpha$ to $i_\beta$ which does not cross neither $\alpha$ nor $\beta$. We decide to cut along this arc $\gamma$, as we did in lemma \ref{lem:cutalong}. We are now, from the previous Lemma, in a case of size $n-1$, and we can apply the induction hypothesis: there exists some $k \in \{1,\cdots,m\}$ such that $\mathrm{Ext}_{\mathcal{C'}}^k(X_\alpha,X_\beta) \neq 0$. From Iyama-Yoshino, we have that $\mathcal{C}(X,Y[i])$ and $\mathcal{C'}(X,Y\langle i \rangle)$ are isomorphic. This finishes the proof of the theorem.}

Else we can draw an $m$-ear $\gamma$ between an extremity of $\alpha$ and an extremity of $\beta$, then it suffices to cut along $\gamma$ and repeat the operation as many times as necessary, in order to reduce to the previous case.
\end{itemize}
See figure \ref{fig:ext1} for an illustration.

\begin{figure}[!h]
\centering
\begin{tikzpicture}[scale=0.9]
\fill[fill=black,fill opacity=0.1] (3.8,4.86) -- (2.62,4.2) -- (1.93,3.04) -- (1.91,1.69) -- (2.57,0.51) -- (3.73,-0.19) -- (5.08,-0.2) -- (6.26,0.46) -- (6.96,1.62) -- (6.97,2.97) -- (6.31,4.15) -- (5.15,4.84) -- cycle;
\fill[fill=black,fill opacity=0.1] (4.96,3.18) -- (3.9,3.18) -- (3.37,2.26) -- (3.9,1.34) -- (4.96,1.34) -- (5.49,2.26) -- cycle;
\draw (3.8,4.86)-- (2.62,4.2);
\draw (2.62,4.2)-- (1.93,3.04);
\draw (1.93,3.04)-- (1.91,1.69);
\draw (1.91,1.69)-- (2.57,0.51);
\draw (2.57,0.51)-- (3.73,-0.19);
\draw (3.73,-0.19)-- (5.08,-0.2);
\draw (5.08,-0.2)-- (6.26,0.46);
\draw (6.26,0.46)-- (6.96,1.62);
\draw (6.96,1.62)-- (6.97,2.97);
\draw (6.97,2.97)-- (6.31,4.15);
\draw (6.31,4.15)-- (5.15,4.84);
\draw (5.15,4.84)-- (3.8,4.86);
\draw (4.96,3.18)-- (3.9,3.18);
\draw (3.9,3.18)-- (3.37,2.26);
\draw (3.37,2.26)-- (3.9,1.34);
\draw (3.9,1.34)-- (4.96,1.34);
\draw (4.96,1.34)-- (5.49,2.26);
\draw (5.49,2.26)-- (4.96,3.18);
\draw [shift={(3.94,1.48)}] plot[domain=0.05:2.02,variable=\t]({1*3.02*cos(\t r)+0*3.02*sin(\t r)},{0*3.02*cos(\t r)+1*3.02*sin(\t r)});
\draw [shift={(4.66,3.75)}] plot[domain=-1.06:0.24,variable=\t]({1*1.71*cos(\t r)+0*1.71*sin(\t r)},{0*1.71*cos(\t r)+1*1.71*sin(\t r)});
\draw [shift={(4.42,1.05)},dash pattern=on 5pt off 5pt,color=red]  plot[domain=1.02:2.09,variable=\t]({1*3.63*cos(\t r)+0*3.63*sin(\t r)},{0*3.63*cos(\t r)+1*3.63*sin(\t r)});
\begin{scriptsize}
\draw[color=black] (5.76,3.5) node {$\alpha$};
\draw[color=black] (6.26,2.54) node {$\beta$};
\draw[color=red] (5.48,4.3) node {$\gamma$};
\end{scriptsize}
\end{tikzpicture}
\caption{Illustration of the first case for $p=6$, $q=3$ and $m=2$}
\label{fig:ext1}
\end{figure}

\item Second case: The $m$-diagonal $\alpha$ corresponds to an object $X_\alpha$ which lies in the transjective part of the Auslander-Reiten quiver of $\mathcal{C}^{m}_{\tilde{A_n}}$.

\begin{enumerate}
\item If $\beta$ is homotopic to the boundary path of one of the polygons (let us say for instance that $\beta$ is homotopic to the boundary path of the external polygon). Then, we use the same type of argument.
\begin{itemize}
\item \cor{If $k=|i_\alpha-i_\beta| \leq m$, then it suffices to shift $\beta$ $k<m$ times in order to hang one extremity of $\beta$ to one extremity of $\alpha$. This corresponds to a long move, then to a morphism in the module category in the sense of Baur and Torkildsen in \cite{BauTor}.}
\item \cor{Else, there exists an $m$-ear which dos note cross neither $\alpha$ nor $\beta$. We cut along this $m$-ear, and repeat the operation as many times as necessary to reduce to the first case.}
\end{itemize}

\item If $\beta$ is an $m$-diagonal corresponding to the object $X_\beta$ in the transjective part of the Auslander-Reiten quiver of $\mathcal{C}^{m}_{\tilde{A_n}}$.
\begin{itemize}
\item \cor{If $k=|i_\alpha-i_\beta| \leq m$, then we can show that there exists a morphism in the module category from $X_\alpha$ to $X_\beta[k]$ with the article of Baur and Torkildsen \cite[Paragraphs 3.3 and 3.4]{BauTor}.}
\item \cor{Else, there exists an $m$-ear $\gamma$ which does not cross $\alpha$ nor $\beta$. It suffices to cut along $\gamma$ and repeat as many times as necessary in order to reduce to the previous case.}
\end{itemize}
\end{enumerate}
\end{enumerate}
\end{proof}

\subsection{Case $\tilde{D}$}

\begin{defi}
Let $P$ be a polygon with $(n-2)m$ sides with two $m-1$-gons inside of it, associated with a quiver of type $\tilde{D_n}$. Then, an $m$-ear is an $m$-diagonal linking a vertex $i$ to the vertex $i+m+1$ homotopic to the boundary of $P$.
\end{defi}

\begin{lem}\label{lem:dtilde}
Let $P$ be a polygon with $(n-2)m$ sides with two $m-1$-gons inside of it, associated with a quiver $Q$ of type $\tilde{D_n}$ and let $\alpha$ be an $m$-ear. Then the Iyama-Yoshino reduction of $\mathcal{C}^{m}_{\tilde{D_n}}$ applied on $X_\alpha$ corresponds to cutting along $\alpha$. More precisely, let $\mathcal{C}$ be the $m$-cluster category associated with a quiver of type $\tilde{D_n}$, and let \[\mathcal{C}'=\mathcal{U}/X_\alpha,\] where $X_\alpha$ is the $m$-rigid object associated with $\alpha$, and \[\mathcal{U}= \{ Y, \mathrm{Ext}_\mathcal{C}^l(X_\alpha,Y)=0~\forall l \in \{1,\cdots,m \} \}.\]
Let $Q/\alpha$ be the quiver $Q$ where the vertex corresponding to $\alpha$ and all the incident arrows have been removed. Then we have the following result:
\[ \mathcal{C'} \simeq \mathcal{C}^{m}_{Q/\alpha}. \]
\end{lem}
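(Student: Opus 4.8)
The plan is to deduce the statement from the Keller--Reiten theorem \ref{th:kr}, exactly along the lines of the corresponding lemmas in cases $A$, $D$ and, above all, $\tilde{A}$. The reduction $\mathcal{C}'=\mathcal{U}/(X_\alpha)$ is Hom-finite; being an Iyama--Yoshino reduction of the $(m+1)$-Calabi--Yau category $\mathcal{C}$ it is again $(m+1)$-Calabi--Yau (by \cite{IY}) and it is algebraic (by the remarks following Theorem \ref{th:kr}). Hence, to obtain $\mathcal{C}'\simeq\mathcal{C}^{m}_{Q/\alpha}$ it suffices to exhibit an $m$-cluster-tilting object $T$ of $\mathcal{C}'$ with $\mathrm{End}_{\mathcal{C}'}(T)\simeq K(Q/\alpha)$, where $Q/\alpha$ is a quiver of type $\tilde{D}_{n-1}$, and with $\mathrm{Ext}^{-i}_{\mathcal{C}'}(T,T)=0$ for all $i\in\{1,\dots,m\}$.

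First I would normalize the data, as in the $\tilde{A}$ case. Using the bijection between $m$-diagonals and $m$-rigid objects from \cite{JM} and the rotational symmetry of the realization, I may assume the boundary of $P$ is numbered so that the initial $(m+2)$-angulation of figure \ref{fig:trianginit} corresponds to the all-projectives object $T_0=\bigoplus_i P_i$ with $1$ a source of $Q$, and so that $X_\alpha$ (up to a shift $[j]$, $j\le m$) is the complement produced by mutating $T_0$ at $P_1$, that is, $T':=T_0/P_1\oplus X_\alpha$ is an $m$-cluster-tilting object. An $m$-ear may a priori sit near either of the two fork ends of $\tilde{D}_n$, equivalently may ``surround'' either of the two central $(m-1)$-gons $R$, $S$, but the obvious symmetry of the realization exchanges these two situations, so it is enough to treat one of them. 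I would then pin down $X_\alpha$ as in the $\tilde{A}$ proof: $\mathcal{C}(X_\alpha[-1],T_0)$ vanishes on every $P_i$ with $i$ not the mutated vertex because $T'$ is $m$-rigid, and is one-dimensional on the mutated summand by \cite{IY}; combined with the $(m+1)$-Calabi--Yau duality $\mathcal{C}(X_\alpha[-1],T_0)\simeq D\mathcal{C}(T_0,\tau X_\alpha)$ this identifies $\tau X_\alpha$ with the mouth simple at the base of the exceptional tube of rank $n-2$, so $X_\alpha$ lies in that tube and corresponds geometrically to the ear $\alpha$.

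Next I would compute $\mathrm{End}_{\mathcal{C}'}(T)$ for $T:=T_0/P_1=\bigoplus_{i\ne1}P_i$, viewed as an object of $\mathcal{C}'$, and show it is $K\tilde{D}_{n-1}$. The geometric reason this works is that cutting the $(n-2)m$-gon $P$ (with its two central $(m-1)$-gons) along the $m$-ear $\alpha$ discards the cut-off $(m+2)$-gon and leaves an $(n-3)m$-gon still carrying the same two central $(m-1)$-gons, i.e.\ precisely the realization of type $\tilde{D}_{n-1}$; so $Q/\alpha$ is of type $\tilde{D}_{n-1}$ and has $n$ vertices. To make this rigorous I would mimic the $\tilde{A}$ computation: the Gabriel quiver of $\mathrm{End}_{\mathrm{mod}\,KQ}(\bigoplus_{i\ne1}P_i)$ is the full sub-quiver of $Q$ on the vertices $\ne1$, and the one extra arrow that closes it up into $\tilde{D}_{n-1}$ comes from a morphism present only in the $m$-cluster category. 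This extra morphism can be tracked with the explicit description of morphisms in $\mathrm{mod}\,K\tilde{D}_n$ and the translation quiver of \cite{JM}, using the orbit-category decomposition $\mathcal{C}(M,N)\simeq\bigoplus_{i\in\mathbb{Z}}\mathcal{D}^b(G^iM,N)$ with $G=\tau^{-1}[m]$ and the hereditarity of $K\tilde{D}_n$, which kills $\mathrm{Ext}^{\ge2}_{\mathcal{D}^b}$ and so leaves only finitely many nonzero summands. Passing from $\mathcal{C}$ to $\mathcal{C}'$ changes nothing here, since the ear is ``disjoint'' from all of $T$ and no nonzero morphism between summands of $T$ factors through $\mathrm{add}\,X_\alpha$.

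It then remains to verify $\mathrm{Ext}^{-i}_{\mathcal{C}'}(T,T)=0$ for $i\in\{1,\dots,m\}$, which I would do exactly as in type $\tilde{A}$: first in $\mathcal{C}$ and then transporting the conclusion to $\mathcal{C}'$. In $\mathcal{C}$, the projective summands contribute nothing by \cite[Lemma 4.1]{KR}; there is no nonzero morphism from a regular module to a preprojective one, which (via \cite{W}, \cite{ZZ}) kills the interactions of $X_\alpha[-i]$ with the $P_j$ in one direction; and an exchange-triangle argument --- using that $\mathcal{C}(P_j,P_k[-i+1])=0$ for $i\ge2$ and the same no-map-from-regular-to-preprojective fact for $i=1$ --- kills the other direction; hence $\mathrm{Ext}^{-i}_{\mathcal{C}}(T,T)=0$. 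Then Lemma \ref{lem:shift} gives $T\langle-i\rangle\simeq T[-i]$ in $\mathcal{C}'$, whence $\mathrm{Ext}^{-i}_{\mathcal{C}'}(T,T)=0$, and Theorem \ref{th:kr} applies to give $\mathcal{C}'\simeq\mathcal{C}^{m}_{\tilde{D}_{n-1}}=\mathcal{C}^{m}_{Q/\alpha}$. The step I expect to be the main obstacle is the precise identification of $X_\alpha$ together with the computation of $\mathrm{End}_{\mathcal{C}'}(T)$ --- in particular exhibiting the extra arrow that upgrades the deleted-vertex quiver to $\tilde{D}_{n-1}$, and doing this uniformly for both positions an $m$-ear can occupy relative to the two central polygons; the negative-$\mathrm{Ext}$ vanishing, though technical, should go through essentially verbatim from the $\tilde{A}$ argument.
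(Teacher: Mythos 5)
Your overall strategy (Iyama--Yoshino reduction is algebraic, Hom-finite and $(m+1)$-Calabi--Yau, then exhibit an $m$-cluster-tilting object satisfying the hypotheses of Theorem \ref{th:kr}) is the same as the paper's, and your treatment of the negative extensions and of Lemma \ref{lem:shift} matches the paper's argument. The genuine gap is in your normalization: you take the complement obtained by mutating $T_0=\bigoplus_i P_i$ at $P_1$ with $1$ a \emph{source at a fork end}, and claim the computation $\mathcal{C}(X[-1],T_0)\simeq D\mathcal{C}(T_0,\tau X)$ identifies $\tau X$ with the mouth simple of the tube of rank $n-2$. It does not: that computation forces $\tau X\simeq S_1$, and for a fork-end source $S_1=I_1$ is preinjective (its defect is $\delta_1=1>0$), so $X$ is not regular and cannot be the $m$-ear, which does lie at the mouth of the rank-$(n-2)$ tube. (Your geometric remark that an ear ``sits near a fork end'' is the source of the confusion; cutting along an ear shortens the boundary polygon and removes a vertex of the central chain, it does not remove a fork.) The paper instead mutates at the projective $P_k$ attached to the vertex $k$ whose simple is regular at the mouth of the big tube (in its orientation, the vertex branching towards $n-1$ and $n$); only then does the duality argument give $\tau X\simeq S_k$ and $X=X_\alpha$.

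This wrong choice then breaks the endomorphism computation. With your candidate $T=\bigoplus_{i\neq 1}P_i$, the Gabriel quiver is the full subquiver of $Q$ on the vertices $\neq 1$, of Dynkin type $D_{n-1}$, and the ``extra arrow present only in the $m$-cluster category'' that you invoke to close it up into $\tilde{D}_{n-1}$ does not exist: no nonzero map between the remaining projectives factors through the deleted source $1$, and the orbit-category correction terms $\mathcal{D}^b(G^iM,N)$, $i\neq 0$, vanish between projectives by the hereditariness argument you yourself quote; the corresponding Iyama--Yoshino reduction would in fact be of type $D_{n-1}$, not $\tilde{D}_{n-1}$. In the paper's setup the upgrade to $\tilde{D}_{n-1}$ needs no phantom morphism: after deleting $P_k$ from the reduction at $X_\alpha$, the compositions $P_{k-1}\to P_k\to P_{n-1}$ and $P_{k-1}\to P_k\to P_n$ (already present in $\mathrm{mod}\,KQ$) become irreducible, producing exactly the two new arrows of the mutated colored quiver with vertex $k$ removed, which is of shape $\tilde{D}_{n-1}$; one then checks as in the paper that there are no relations and that $\mathrm{Ext}^{-i}$ vanishes, and Theorem \ref{th:kr} applies. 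So the proof as written cannot be completed; it is repaired by replacing your mutation vertex $1$ by the vertex $k$ with $S_k$ regular, after which the remaining steps you outline go through.
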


\begin{rmk}
Let us begin by illustrating this fact with the Gabriel quivers. We focus on the following $(m+2)$-angulation:

\begin{figure}[h!]
	\centering
\begin{tikzpicture}[scale=0.65]
	\fill[fill=black,fill opacity=0.1] (0,4) -- (-1.66,3.46) -- (-2.69,2.05) -- (-2.68,0.3) -- (-1.66,-1.11) -- (0,-1.65) -- (1.66,-1.11) -- (2.69,0.3) -- (2.69,2.05) -- (1.66,3.46) -- cycle;
	\draw [fill=black,fill opacity=1.0] (-1.66,1.57) circle (0.2cm);
	\draw [fill=black,fill opacity=1.0] (1.66,1.58) circle (0.2cm);
	\draw (0,4)-- (-1.66,3.46);
	\draw (-1.66,3.46)-- (-2.69,2.05);
	\draw (-2.69,2.05)-- (-2.68,0.3);
	\draw (-2.68,0.3)-- (-1.66,-1.11);
	\draw (-1.66,-1.11)-- (0,-1.65);
	\draw (0,-1.65)-- (1.66,-1.11);
	\draw (1.66,-1.11)-- (2.69,0.3);
	\draw (2.69,0.3)-- (2.69,2.05);
	\draw (2.69,2.05)-- (1.66,3.46);
	\draw (1.66,3.46)-- (0,4);
	\draw(-1.66,1.17) circle (0.4cm);
	\draw(1.66,1.18) circle (0.4cm);
	\draw [shift={(-7.95,-1.41)}] plot[domain=-0.03:0.66,variable=\t]({1*7.96*cos(\t r)+0*7.96*sin(\t r)},{0*7.96*cos(\t r)+1*7.96*sin(\t r)});
	\draw [shift={(8.42,-1.56)},color=black]  plot[domain=2.5:3.15,variable=\t]({1*8.42*cos(\t r)+0*8.42*sin(\t r)},{0*8.42*cos(\t r)+1*8.42*sin(\t r)});
	\draw [shift={(-1.34,-1.17)}] plot[domain=-1.77:1.07,variable=\t]({-0.15*3.42*cos(\t r)+-0.99*1.28*sin(\t r)},{0.99*3.42*cos(\t r)+-0.15*1.28*sin(\t r)});
	\draw [shift={(1.34,-1.17)}] plot[domain=-1.77:1.07,variable=\t]({0.15*3.42*cos(\t r)+0.99*1.28*sin(\t r)},{0.99*3.42*cos(\t r)+-0.15*1.28*sin(\t r)});
	\draw [shift={(1.47,-0.36)}] plot[domain=-0.32:1.37,variable=\t]({-0.91*3.99*cos(\t r)+-0.42*1.82*sin(\t r)},{0.42*3.99*cos(\t r)+-0.91*1.82*sin(\t r)});
	\draw [shift={(-1.47,-0.36)}] plot[domain=-0.32:1.37,variable=\t]({0.9*3.99*cos(\t r)+0.43*1.82*sin(\t r)},{0.43*3.99*cos(\t r)+-0.9*1.82*sin(\t r)});
	\draw [shift={(-0.37,0.1)}] plot[domain=2.86:5.31,variable=\t]({0.78*2.44*cos(\t r)+0.63*1.36*sin(\t r)},{-0.63*2.44*cos(\t r)+0.78*1.36*sin(\t r)});
	\draw [shift={(-1.26,-1.78)}] plot[domain=-1.59:0.07,variable=\t]({-0.16*3.73*cos(\t r)+-0.99*1.26*sin(\t r)},{0.99*3.73*cos(\t r)+-0.16*1.26*sin(\t r)});
	\draw [shift={(1.26,-1.78)}] plot[domain=-1.59:0.07,variable=\t]({0.16*3.73*cos(\t r)+0.99*1.26*sin(\t r)},{0.99*3.73*cos(\t r)+-0.16*1.26*sin(\t r)});
	\draw [shift={(0.37,0.1)}] plot[domain=2.86:5.31,variable=\t]({-0.78*2.44*cos(\t r)+-0.63*1.36*sin(\t r)},{-0.63*2.44*cos(\t r)+0.78*1.36*sin(\t r)});
\end{tikzpicture}
\caption{$(m+2)$-angulation of type $\tilde{D}$.}
\label{fig:tr}
\end{figure}

Let $Q$ be the associated quiver:
\[ \xymatrix@1{
	1 \ar[dr] & & & & & n-1 \\
	& 3 \ar[r] & \cdots \ar[r] & k-1 \ar[r] & k \ar[ur]\ar[dr] & \\
	2 \ar[ur] & & & & & n} \]
The mutation at vertex $k$ leads to the following quiver:

\[ \xymatrix@1{
1 \ar[dr] & & & & & n-1 \ar[dl] \\
& 3 \ar[r] & \cdots \ar[r] & k-1 \ar[urr]\ar[drr] & k & \\
2 \ar[ur] & & & & & n \ar[ul]} \]

Using the Iyama-Yoshino reduction at vertex $k$ corresponds to forget this vertex and all incident arrows. By doing this, we are ensured to be reduced to a quiver of type $\tilde{D}_{n-1}$:

\[ \xymatrix@1{
1 \ar[dr] & & & & & n-1 \\
& 3 \ar[r] & \cdots \ar[r] & k-1 \ar[urr]\ar[drr] & & \\
2 \ar[ur] & & & & & n} \]

Another illustration is given in figure \ref{fig:tubes}.
\end{rmk}

\begin{proof}
Here again, we use theorem \ref{th:kr} of Keller and Reiten. The biggest difficulty in this proof, is to build an $m$-ear. In fact, the $m$-ear is made from the flip of a special $(m+2)$-angulation. We are going to build an $m$-ear in this way, but any $m$-ear can be built in this way by rotation. From this $m$-ear, we build a new $m$-cluster-tilting object respecting the hypotheses of the Theorem of Keller and Reiten.

\cor{Let \[T=\bigoplus_{i=1}^n P_i\] be the sum of all projective objects. We know that $T$ is an $m$-cluster-tilting object. This object is naturally associated with the initial $(m+2)$-angulation defined the be author in \cite[Definition 2.15]{JM}.}

Let $P_{n-2}$ be the following projective module, which we can see as an object in $\mathcal{C}^{m}_{\tilde{D_n}}$:
\[ \xymatrix@1{
K \ar[dr] & & & & & 0 \\
& K \ar[r] & \cdots \ar[r] & K \ar[r] & K \ar[ur]\ar[dr] & \\
K \ar[ur] & & & & & 0} \]
From Iyama and Yoshino in \cite{IY}, we have an exchange triangle:

\begin{equation*}
P_{n-2} \to Y \to X \to P_{n-2}[1]
\label{eq:tri}
\end{equation*}

where $Y \in \mathrm{add} \bigoplus_{j \neq n-2} P_j$.

\cor{Let us consider $T_{n-2}=T/P_{n-2}$ the almost $m$-cluster-tilting object (see Wraalsen and Zhou, Zhu in \cite{W} and \cite{ZZ} for any results on these almost $m$-cluster-tilting objects).}

Let
\[T'=\bigoplus_{j \neq n-2} P_j \oplus X=T_{n-2} \oplus X.\]

Let us first show that $X$ is in fact $X_\alpha$, corresponding to the $m$-diagonal, $\alpha$, which is the arc obtained by flipping the $m$-diagonal of type $1$ corresponding to the vertex $n-2$ of the quiver (see figure \ref{fig:tubes}).

\begin{figure}[!h]
\centering
\begin{tikzpicture}[scale=0.6]
\fill[fill=black,fill opacity=0.1] (0,4) -- (-1.66,3.46) -- (-2.69,2.05) -- (-2.68,0.3) -- (-1.66,-1.11) -- (0,-1.65) -- (1.66,-1.11) -- (2.69,0.3) -- (2.69,2.05) -- (1.66,3.46) -- cycle;
\draw [fill=black,fill opacity=1.0] (1.66,1.61) circle (0.2cm);
\draw [fill=black,fill opacity=1.0] (-1.66,1.61) circle (0.2cm);
\draw (0,4)-- (-1.66,3.46);
\draw (-1.66,3.46)-- (-2.69,2.05);
\draw (-2.69,2.05)-- (-2.68,0.3);
\draw (-2.68,0.3)-- (-1.66,-1.11);
\draw (-1.66,-1.11)-- (0,-1.65);
\draw (0,-1.65)-- (1.66,-1.11);
\draw (1.66,-1.11)-- (2.69,0.3);
\draw (2.69,0.3)-- (2.69,2.05);
\draw (2.69,2.05)-- (1.66,3.46);
\draw (1.66,3.46)-- (0,4);
\draw [shift={(-8.03,-1.44)}] plot[domain=-0.03:0.66,variable=\t]({1*8.03*cos(\t r)+0*8.03*sin(\t r)},{0*8.03*cos(\t r)+1*8.03*sin(\t r)});
\draw [shift={(-0.75,-11.66)}] plot[domain=-0.82:0.59,variable=\t]({-0.07*14.49*cos(\t r)+-1*1.99*sin(\t r)},{1*14.49*cos(\t r)+-0.07*1.99*sin(\t r)});
\draw [shift={(8.03,-1.43)}] plot[domain=2.49:3.17,variable=\t]({1*8.03*cos(\t r)+0*8.03*sin(\t r)},{0*8.03*cos(\t r)+1*8.03*sin(\t r)});
\draw [shift={(0.76,-11.66)},color=red]  plot[domain=-0.82:0.59,variable=\t]({0.07*14.49*cos(\t r)+1*1.99*sin(\t r)},{1*14.49*cos(\t r)+-0.07*1.99*sin(\t r)});
\draw(1.67,1.18) circle (0.44cm);
\draw(-1.67,1.17) circle (0.44cm);
\draw [shift={(-7.33,-2.91)}] plot[domain=0.17:0.66,variable=\t]({1*7.44*cos(\t r)+0*7.44*sin(\t r)},{0*7.44*cos(\t r)+1*7.44*sin(\t r)});
\draw [shift={(7.33,-2.9)}] plot[domain=2.48:2.97,variable=\t]({1*7.44*cos(\t r)+0*7.44*sin(\t r)},{0*7.44*cos(\t r)+1*7.44*sin(\t r)});
\draw [shift={(2.52,-0.41)}] plot[domain=-0.43:1.16,variable=\t]({-0.96*5.16*cos(\t r)+-0.29*2.09*sin(\t r)},{0.29*5.16*cos(\t r)+-0.96*2.09*sin(\t r)});
\draw [shift={(-2.52,-0.42)}] plot[domain=-0.43:1.16,variable=\t]({0.96*5.16*cos(\t r)+0.3*2.09*sin(\t r)},{0.3*5.16*cos(\t r)+-0.96*2.09*sin(\t r)});
\begin{scriptsize}
\draw[color=black] (-0.37,1.28) node {$4$};
\draw[color=black] (-1.48,2.56) node {$3$};
\draw[color=black] (0.48,1.19) node {$5$};
\draw[color=black] (1.62,2.85) node {$6$};
\draw[color=black] (-0.7,0.23) node {$2$};
\draw[color=black] (0.62,0.23) node {$7$};
\draw[color=black] (-2.12,0.21) node {$1$};
\draw[color=black] (2.41,0.43) node {$8$};
\end{scriptsize}
\end{tikzpicture}
\hspace{10pt}
$\xrightarrow{\text{flip at k=6}}$
\hspace{10pt}
\begin{tikzpicture}[scale=0.6]
\fill[fill=black,fill opacity=0.1] (0,4) -- (-1.66,3.46) -- (-2.69,2.05) -- (-2.68,0.3) -- (-1.66,-1.11) -- (0,-1.65) -- (1.66,-1.11) -- (2.69,0.3) -- (2.69,2.05) -- (1.66,3.46) -- cycle;
\draw [fill=black,fill opacity=1.0] (1.66,1.61) circle (0.2cm);
\draw [fill=black,fill opacity=1.0] (-1.66,1.61) circle (0.2cm);
\draw (0,4)-- (-1.66,3.46);
\draw (-1.66,3.46)-- (-2.69,2.05);
\draw (-2.69,2.05)-- (-2.68,0.3);
\draw (-2.68,0.3)-- (-1.66,-1.11);
\draw (-1.66,-1.11)-- (0,-1.65);
\draw (0,-1.65)-- (1.66,-1.11);
\draw (1.66,-1.11)-- (2.69,0.3);
\draw (2.69,0.3)-- (2.69,2.05);
\draw (2.69,2.05)-- (1.66,3.46);
\draw (1.66,3.46)-- (0,4);
\draw [shift={(-8.03,-1.44)}] plot[domain=-0.03:0.66,variable=\t]({1*8.03*cos(\t r)+0*8.03*sin(\t r)},{0*8.03*cos(\t r)+1*8.03*sin(\t r)});
\draw [shift={(-0.75,-11.66)}] plot[domain=-0.82:0.59,variable=\t]({-0.07*14.49*cos(\t r)+-1*1.99*sin(\t r)},{1*14.49*cos(\t r)+-0.07*1.99*sin(\t r)});
\draw [shift={(8.03,-1.43)}] plot[domain=2.49:3.17,variable=\t]({1*8.03*cos(\t r)+0*8.03*sin(\t r)},{0*8.03*cos(\t r)+1*8.03*sin(\t r)});
\draw(1.67,1.18) circle (0.44cm);
\draw(-1.67,1.17) circle (0.44cm);
\draw [shift={(0.6,-0.09)}] plot[domain=-0.36:1.61,variable=\t]({-0.9*3.18*cos(\t r)+-0.43*1.66*sin(\t r)},{0.43*3.18*cos(\t r)+-0.9*1.66*sin(\t r)});
\draw [shift={(-0.6,-0.09)}] plot[domain=-0.36:1.61,variable=\t]({0.9*3.18*cos(\t r)+0.43*1.66*sin(\t r)},{0.43*3.18*cos(\t r)+-0.9*1.66*sin(\t r)});
\draw [shift={(-1.22,1.18)},color=red]  plot[domain=-0.67:0.67,variable=\t]({1*3.67*cos(\t r)+0*3.67*sin(\t r)},{0*3.67*cos(\t r)+1*3.67*sin(\t r)});
\draw [shift={(-8.55,-3.43)}] plot[domain=0.21:0.64,variable=\t]({1*8.73*cos(\t r)+0*8.73*sin(\t r)},{0*8.73*cos(\t r)+1*8.73*sin(\t r)});
\draw [shift={(8.55,-3.42)}] plot[domain=2.51:2.94,variable=\t]({1*8.73*cos(\t r)+0*8.73*sin(\t r)},{0*8.73*cos(\t r)+1*8.73*sin(\t r)});
\begin{scriptsize}
\draw[color=black] (-0.37,1.28) node {$4$};
\draw[color=black] (-1.48,2.56) node {$3$};
\draw[color=black] (0.48,1.19) node {$5$};
\draw[color=black] (-1.92,-0.06) node {$2$};
\draw[color=black] (2.07,0.27) node {$8$};
\draw[color=black] (2.56,1.31) node {$6$};
\draw[color=black] (-0.76,0.28) node {$1$};
\draw[color=black] (0.8,0.44) node {$7$};
\end{scriptsize}
\end{tikzpicture}
\caption{We flip the arc corresponding to $P_{n-2}$. The new arc is called by $\alpha$.}
\label{fig:tubes}
\end{figure}

\cor{We have to show that $\tau X=S_{n-2}$, the simple module in $n-2$, which is situated at the bottom of the tube of size $n-2$ as we set in the previous section.}

Let us find $\mathcal{C}(X[-1],T)$. For all $i \neq k$, $\mathcal{C}(X[-1],P_i) \simeq \mathcal{C}(X,P_i[1])=0$ since $T'$ is an $m$-cluster-tilting object.

Now we focus on $\mathcal{C}(X[-1],P_{n-2})$.
We have that $\mathcal{C}(X[-1],P_{n-2}) \simeq \mathcal{C}(X,P_{n-2}[1]) \simeq K$ from Iyama and Yoshino \cor{at Corollary 6.4} in \cite{IY}.

\cor{From the Auslander-Reiten duality, we have:}

\[ \mathcal{C}(X[-1],T) \simeq \mathcal{C}(X,T[1]) \simeq D\mathcal{C}(T,\tau X) \]

Then $\tau X \simeq S_{n-2}$ and $X=X_\alpha$ corresponds to the arc $\alpha$.

\cor{It now remains to check the hypotheses of the Theorem of Keller-Reiten. First, $\mathcal{C}'$ is a Hom-finite algebraic $(m+1)$-Calabi-Yau category. The object $T'$ is our candidate. It is still an $m$-cluster-tilting object.
First of all, from \cite[Lemma 4.1]{KR}, we have \[\mathcal{C}(P_j,P_l[-i])=0\] for any $j$ and $l$. Moreover, we have $\mathcal{C}(X,X[-i])=0$. In addition, $\mathcal{C}(X[-i],P_j)=0$ as there is no morphism from a regular object to a preprojective object. Finally, we have that $\mathcal{C}(P_j,X[-i])=0$ for any $j \neq k$ and any $i \in \{1,\cdots,m\}$ as in type $\tilde{A}$.}

%
%

We can thus apply the Theorem of Keller and Reiten, and this finishes the proof.
\end{proof}

\begin{lem}\label{lem:cut}
\cor{Let $P$ be a polygon associated with a quiver $Q$ of type $\tilde{D}_n$. Let $\alpha$ be an $m$-ear. Let $\beta$ be an $m$-diagonal which cuts $\alpha$. Let $X_\beta$ be the associated $m$-rigid object. Then there exists $k \in \{1,\cdots,m\}$ such that $\mathrm{Ext}_\mathcal{C}^k(X_\alpha,X_\beta) \neq 0$.}
\end{lem}

\begin{rmk}
\cor{We need to note that the cases are symmetric. Indeed, since the higher cluster category is $(m+1)$-Calabi-Yau, we know that \[\mathcal{C}(\beta[k-(m+1)],\alpha) \simeq D\mathcal{C}(\alpha,\beta[k]).\] This means that a morphism from $\alpha$ to $\beta[k]$ is in $1-1$-correspondence with a morphism from $\beta[k-(m+1)]$ to $\alpha$. Thus, shifting $\beta$, $k$ times is the same as shifting $\alpha$, $k$ times. This means no matter which vertex we shift.}
\end{rmk}

\begin{proof}
Let $\beta$ be an arc crossing $\alpha$.

\cor{Let $X_\alpha$ be the $m$-rigid indecomposable object associated with $\alpha$. By the geometric realization of $\tilde{D}$, the $m$-diagonal $\alpha$ is situated at the bottom of a tube of size $n-1$. Without any loss of generality, we suppose that $\alpha$ links $i$ and $i+m$ in $P$.}

\cor{Let $\beta$ be an $m$-diagonal crossing $\alpha$. It means that there exists $k \in \{1,\cdots,m-1\}$, such that an extremity $i_\beta$ of $\beta$ is the vertex $i+k$ in the outer polygon. Therefore, it suffices to shift $\beta$ $k<m$ times, so that one extremity of $\beta[k]$ is $i$, common to $\alpha$.}

\cor{If $\beta[k]$ is in a tube of size $n-2$, then it is situated at the same tube as $\alpha$ (since they both end in $i$), higher than it, in the sense that we can draw a path from $\alpha$ to $\beta[k]$. As $\beta[k]$ and $\alpha$ share a common end $i$, the number of arrow (it means indecomposable morphisms) from $\alpha$ to $\beta[k]$ is the same as the aisle of the tube. Then there exists a morphism from $X_\alpha$ to $X_\beta[k]$.}

\cor{Else, $\beta[k]$ is situated in a slice $\mathcal{S}$ of the preinjective part of the Auslander-Reiten quiver. Then it suffices to show that there is a morphism from $X_\alpha$ which lies in the tube, to the only source of $\mathcal{S}$, it means in our orientation, to $A_1$ in the following quiver.}

\[ \scalebox{0.5}{
 \xymatrix{
 & & & & A_8 \\
 & & & A_6\ar[ur] \ar[r] & A_7 \\
 & & X_\beta[k] \ar[ur] & & \\
 & A_4 \ar[ur] & & & \\
 A_1 \ar[ur] \ar[r] \ar[dr] & A_2 & & & \\
 & A_3 & & & & \\
 }}
\]

We note that $\beta[k-1]$ is exactly the arc corresponding to $\tau^{-1}P_l$ for an $l \in \{1,\cdots,n+1 \}$. We can prove the existence of a morphism in the module category from the simple regular $X_\alpha$ to $\tau^{-1}P_l$ for any $l$.

To draw an example, in case $n=7$, let us give the dimension vectors of $\tau^{-1}P_l$, for each $l$: They are

$\scalebox{0.5}{\xymatrix{1 \ar[dr] & & & & 0 \ar[dl] \\ & 2 & 1 \ar[l] & 1 \ar[l]& \\ 1 \ar[ur] & & & & 1 \ar[ul]}}$ ; $\scalebox{0.5}{\xymatrix{1 \ar[dr] & & & & 1 \ar[dl] \\ & 2 & 1\ar[l] & 1\ar[l] & \\ 1 \ar[ur] & & & & 0 \ar[ul]}}$ ; $\scalebox{0.5}{\xymatrix{1 \ar[dr] & & & & 1 \ar[dl] \\ & 3 & 2\ar[l] & 2\ar[l] & \\ 1 \ar[ur] & & & & 1 \ar[ul]}}$ ; $\scalebox{0.5}{\xymatrix{1 \ar[dr] & & & & 0 \ar[dl] \\ & 2 & 1\ar[l] & 1\ar[l] & \\ 1 \ar[ur] & & & & 0 \ar[ul]}}$ ; $\scalebox{0.5}{\xymatrix{1 \ar[dr] & & & & 0 \ar[dl] \\ & 2 & 1\ar[l] & 0\ar[l] & \\ 1 \ar[ur] & & & & 0 \ar[ul]}}$ ; $\scalebox{0.5}{\xymatrix{1 \ar[dr] & & & & 0 \ar[dl] \\ & 1 & 1\ar[l] & 0\ar[l] & \\ 0 \ar[ur] & & & & 0 \ar[ul]}}$ ; $\scalebox{0.5}{\xymatrix{0 \ar[dr] & & & & 0 \ar[dl] \\ & 1 & 1\ar[l] & 0\ar[l] & .\\ 1 \ar[ur] & & & & 0 \ar[ul]}}$

In any case there is a morphism from the simple \[\scalebox{0.5}{\xymatrix{0 \ar[dr] & & & & 0 \ar[dl] \\ & 1 & 0\ar[l] & 0\ar[l] & \\ 0 \ar[ur] & & & & 0 \ar[ul]}}\] to any of the $\tau^{-1}P_l$. We can have a deeper analyze in the article of Dlab and Ringel in \cite{DR}.
\end{proof}

We now generalize this result to all arcs excepted the ones in the tubes of size $2$.

\cor{Let $P$ be a polygon with $(n-2)m$ sides. Let $\alpha$ from $i$ to $i+km+2$, for a $k \in \{1,\cdots,n-4\}$, be an $m$-diagonal homotopic to the boundary patch (which corresponds to a regular module in a tube of size $n-2$). Then $\alpha$ cuts the figure into a polygon $T$ with $km+2$ sides on the one hand, and another figure of type $\tilde{D_{n'}}$ for some $n'<n$ on the other hand. Let $\alpha_1,\cdots,\alpha_k$ be $m$-diagonals (in the sense of type $A$) lying in $T$, all ending at the same vertex $i$.}

\begin{lem}\label{lem:succ}
Under these hypotheses, for a fixed $j \in \{1,\cdots,k\}$, let $X_j$ be the object corresponding to the $m$-diagonal $\alpha_j$.
\cor{Let $\mathcal{C}$ be the $m$-cluster category of type $\tilde{D_n}$, associated with an $(m+2)$-angulation containing $\alpha_j$, and let \[\mathcal{C}'_j=\mathcal{U}/\bigoplus X_{\alpha_i},\] where \[\mathcal{U}= \{ Y, \mathrm{Ext}_\mathcal{C}^l(\bigoplus X_{\alpha_i},Y)=0~\forall l \in \{1,\cdots,m \} \}.\] Let $Q/\alpha_1,\cdots,\alpha_n$ be the quiver $Q$ where the vertices corresponding to $\alpha_1,\cdots,\alpha_n$ and all the incident arrows have been removed. Then we have the following result:
\[ \mathcal{C'} \simeq \mathcal{C}^{m}_{Q/\alpha_1,\cdots,\alpha_n}. \]}
\end{lem}

\begin{proof}
We have that $\alpha$ is an $m$-diagonal linking two different vertices $i$ and $i+km+1$, homotopic to the boundary path.

\cor{If $\alpha$ is an $m$-ear, this is exactly the previous lemma. Else, it means that $k >1$. Then there exists $\gamma$ an $m$-ear from $i$ to $i+m+1$ which does not cut $\alpha$. We use the Lemma \ref{lem:dtilde} applied to $\gamma$ in order to cut along this $m$-ear. Then, if $\mathcal{C}'$ is the Iyama-Yoshino reduction at object $X_\gamma$, we obtain that \[\mathcal{C}' \simeq \mathcal{C}^m_{Q/\gamma}.\] We repeat this operation as many times as necessary, to reduce $n$ until $\alpha$ becomes an $m$-ear. We are ensured that the process stops since $\alpha$ cuts the polygon into a $km$-gon with both $m-1$ gons inside of it on the first side, and into a $km+2$-gon of type $A$ on the other side. This shows the result if $\alpha$ is such an $m$-diagonal, corresponding to an $m$-rigid indecomposable object in a tube of size $n-2$.}
\end{proof}

\begin{lem}\label{lem:iydtilde}
\cor{Let $P$ be a polygon with $(n-2)m$ sides. Let $\alpha$ from $i$ to $j$ be an $m$-diagonal which is associated to an $m$-rigid object $X_\alpha$ lying in the transjective component of the Auslander-Reiten quiver of $\mathcal{C}^{m}_Q$. Let $\mathcal{C}$ be the $m$-cluster category associated with a quiver of type $\tilde{D_n}$, and let $\mathcal{C}'=\mathcal{U}/X_\alpha$, where $X_\alpha$ is the $m$-rigid object associated with $\alpha$, and \[\mathcal{U}= \{ Y, \mathrm{Ext}_\mathcal{C}^l(X_\alpha,Y)=0~\forall l \in \{1,\cdots,m \} \}.\]
Let $\Delta$ be an $(m+2)$-angulation containing $\alpha$, and let $Q$ be the associated suiver. Let $Q/\alpha$ be the quiver $Q$ where the vertex corresponding to $\alpha$ and all the incident arrows have been removed. Then we have the following result:
\[ \mathcal{C'} \simeq \mathcal{C}^{m}_{Q/\alpha}. \]}
\end{lem}

\begin{proof}
From the hypotheses, we have that $X_\alpha$ is situated in the preprojective (or preinjective) part of the Auslander-Reiten quiver of $\mathcal{C}^{m}_Q$.

\cor{For $i \in \{1,\cdots,n\}$, let $X_i$ be the objects of the slice in which $X_\alpha$ is. Let $\alpha_i$ be the $m$-diagonal associated with $X_i$.
The objects $X_i$ are in the transjective part of the Auslander-Reiten quiver of the higher cluster category, and, the $m$-diagonals $\alpha_i$ do not cross each other, neither $\alpha$. Indeed, from \cite[Theorem 4.3]{JM}, in the transjective part of the Auslander-Reiten quiver of $\mathcal{C}^{m}_Q$, a slice is composed of objects associated with a collection of $m$-diagonals which do not cross each other, since they form an $(m+2)$-angulation.}

We are now able to use the theorem of Keller and Reiten in \cite[Theorem 4.2]{KR}. Indeed, let \[T=X \oplus\bigoplus_{i=1}^n X_i.\] We know that $T$ is an $m$-cluster-tilting object (because there exist $k \in \mathbb{N}$ such that $T=\tau^k(\bigoplus_{i=1}^{n+1} P_i)$).

Let $\mathcal{C}'$ be the Iyama Yoshino reduction of the higher cluster category $\mathcal{C}^{m}_Q$ over $T$.

We also have that \[\mathrm{Ext}_\mathcal{C'}^{-i}(T,T)=0.\] Moreover, we can check that $\mathrm{End}_\mathcal{C'}(T)=KQ_{T}$ as in the previous lemma. Then we can apply the theorem of Keller and Reiten, and this shows the result.
\end{proof}

We now state a technical lemma which helps us to find morphisms between two $m$-rigid objects.

\begin{lem}\label{lem:morph}
Let $\alpha$ and $\beta$ be two $m$-diagonals. Suppose that there exists $\Delta$ containing $\alpha$, and not $\beta$, and such that $\beta=\kappa^i_\Delta(\alpha)$, for an $i \in \{1,\cdots,m-1\}$.

Let $X_\alpha$ (respectively $X_\beta$) be the $m$-rigid indecomposable object associated with $\alpha$ (respectively $\beta$).

Then \[\mathrm{Ext}_\mathcal{C}^i(X_\alpha,X_\beta) \neq 0.\]
\end{lem}

\begin{proof}
\cor{We number the arcs in $\Delta$ and consider that $\alpha$ corresponds to $k$. We use Iyama-Yoshino reduction in order to prove the statement. Let us introduce \[\mathcal{C}'=\mathcal{U}/\bigoplus_{j \neq k} X_j,\] where
\[ \mathcal{U}= \{ Y, \mathrm{Ext}_\mathcal{C}^l(\bigoplus_{j \neq k} X_j,Y)=0~\forall l \in \{1,\cdots,m \} \}. \]}

By Iyama and Yoshino in \cite[Theorem 4.2]{IY}, we know that $\mathcal{C}'$ is triangulated and $(m+1)$-Calabi-Yau. If $X \to Y \to Z \to X[1]$ is a triangle in $\mathcal{C}$, where $X \to Y$ is a $\oplus X_j$-left approximation, then $Z$ is isomorphic to the shift of $X$ in $\mathcal{C}'$, which we note $Z=X\langle 1 \rangle$.

\cor{From the Lemma \ref{lem:iydtilde}, we have an equivalence of categories between $\mathcal{C'}$ and $\mathcal{C}^{m}_{Q'}$ where $Q'$ is obtained from $Q$ by forgetting all arrows indicent to any of the $X_j, j\neq k$. This roughly corresponds to cut along all the arcs of the $(m+2)$-angulation excepted $\alpha$. Then, as $\beta$ is the $i$-th twist of $\alpha$, it becomes the $i$-th shift in the reduced category. From Iyama and Yoshino in \cite[Theorem 4.2]{IY}, we have $X_\beta=X_\alpha\langle i \rangle$. Then \[\mathrm{Ext}_{\mathcal{C}'}^i(X_\alpha,X_\beta) \neq 0.\] Thus, we can find a morphism in the higher cluster category $\mathcal{C}$.}
\end{proof}

\begin{coro}
Under these hypotheses, the following diagram is commutative:

\cor{\[\xymatrix{\{\beta \in \Gamma\setminus\{\alpha\}, \beta \text{ does not cross } \alpha\} \ar[r] \ar@{<->}[d] & \{X \in \mathcal{U}; X \ncong X_\alpha\}/\simeq \ar@{<->}[d]\\ S/\alpha \ar[r] & \mathcal{C}'/\simeq}\]
where the left vertical bijection is given in the way of Marsh and Palu in \cite{MP}, and the right one is given by the Iyama-Yoshino reduction. The horizontal arrows are maps sending $\beta$ to $X_\beta$.}
\end{coro}

\begin{proof}
Let $\beta$ be an arc which does not cross $\alpha$. Then, $X_\beta$ belongs to $\mathcal{U}$ from the Lemma \ref{lem:morph}. We can apply to $X_\beta$ the Iyama-Yoshino reduction.

On the other hand, if we cut along $\beta$, let us consider the surface left. We take the notations from Definition \ref{def:arcdtilde}:
\begin{itemize}
\item \cor{If $\beta$ is of type $1$, then we obtain two figures of type $D_l$. To be of type $1$ for $\beta$ means that $X_\beta$ is in the transjective part of the Auslander-Reiten quiver. Then, forgetting all arrows incident to $X_\beta$, we observe that the Iyama-Yoshino reduction are two categories of type $D_l$.}
\item \cor{If $\beta$ is of type $2$, then it cuts the polygon into two figures, one of type $A$, one of type $\tilde{D}_k$, for $k<n$. It happens the same in the higher cluster category, and, as we have seen in a previous Lemma, in this case, the Iyama-Yoshino reduction leads to a category of type $\tilde{D}_k$.}
\item \cor{If $\beta$ is of type $3$, then it cuts the figure into a type $D_n$, and the Iyama-Yoshino reduction leads to a category of type $D_n$.}
\end{itemize}
\end{proof}

Before showing the main lemma of this section, we show that we can reduce to the case $\tilde{D}_4$. The following lemma show that we can reduce to cases where $n \leq 6$, and the next remark treats cases $n=5$ and $n=6$.

\begin{lem}
Suppose that $n>6$. Let $\alpha$ and $\beta$ be two crossing $m$-diagonals in the $(n-2)m$-gon realizing $\tilde{D}_n$. Then there exist at least $(n-4)$ $m$-ears which do not cut $\alpha$ neither $\beta$.
\end{lem}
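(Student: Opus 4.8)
The plan is to prove the statement by a counting argument on $m$-ears.

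First I would enumerate the $m$-ears. Since $P$ has $(n-2)m$ boundary edges and an $m$-ear is the $m$-diagonal joining a boundary vertex $i$ to $i+m+1$ homotopic to the length-$(m+1)$ boundary path between these two vertices, there are exactly $(n-2)m$ distinct $m$-ears, one for each block of $m+1$ consecutive boundary edges; for $n>6$ no two of them coincide since $(n-2)m>2(m+1)$. We do not need these ears to be pairwise noncrossing, only that many of them individually avoid both $\alpha$ and $\beta$.

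Second, and this is the technical heart, I would bound, for a single $m$-diagonal $\delta$, the number of $m$-ears crossing it, by a case analysis on the four types of $m$-diagonals of Definition \ref{def:diago}. If $\delta$ has an endpoint $x$ on $\partial P$, then the $m$-ears crossing $\delta$ ``near $x$'' are exactly the diagonals $(v,v+m+1)$ for which $x$ lies strictly in the boundary arc $\{v+1,\dots,v+m\}$; the other endpoint of such an ear then necessarily lies on the far side of $\delta$ — this is where the size bound coming from $n>6$ is used — so $\delta$ genuinely crosses it. This gives at most $m$ crossed ears per boundary endpoint of $\delta$. Hence a type-1 or type-2 $m$-diagonal crosses at most $2m$ of the $m$-ears; a loop (case 3) crosses at most $m$; and an $m$-diagonal linking the two central polygons (case 4) crosses none, since every $m$-ear lies in a collar of $\partial P$ disjoint from such an arc. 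One must be careful here with type-1 and type-2 arcs whose two endpoints are close together (where the two blocks of $m$ ears overlap, so fewer ears are crossed), and with arcs running parallel to a long stretch of $\partial P$ (which still only meet $m$-ears near their endpoints).

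Third I would assemble the bound: $\alpha$ and $\beta$ together cross at most $4m$ of the $(n-2)m$ $m$-ears, so at least $(n-2)m-4m=(n-6)m$ of them are crossed by neither. For $m\geq 3$ and $n\geq 7$ this already gives $(n-6)m\geq n-4$. For $m\in\{1,2\}$ the crude count must be sharpened using the hypothesis that $\alpha$ and $\beta$ \emph{cross}: their endpoints then interleave around $\partial P$, which for small $n$ forces the four forbidden blocks of $m$ ears to overlap and the outer polygon to be small, so that strictly fewer than $4m$ ears are lost. The main obstacle is precisely this last bookkeeping together with the exhaustive case analysis of the second step — verifying in every configuration of the pair $(\alpha,\beta)$ that at least $n-4$ of the $(n-2)m$ $m$-ears survive.
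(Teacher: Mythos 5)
Your first two steps are sound: there are indeed $(n-2)m$ $m$-ears, and an ear $(v,v+m+1)$ can only be crossed by an arc having a boundary endpoint strictly inside $\{v+1,\dots,v+m\}$ (any other intersection is removable, since the ear cuts off a disk containing no thick vertices), so $\alpha$ and $\beta$ together cross at most $4m$ ears and at least $(n-6)m$ survive; this proves the statement for $m\ge 3$, and for $m=2$, $n\ge 8$. The genuine gap is the remaining cases $m\in\{1,2\}$, where you only assert that the crossing hypothesis forces ``strictly fewer than $4m$'' ears to be lost, and that assertion fails. Take $m=2$, $n=7$, so a $10$-gon: let $\alpha$ be the type-$1$ arc joining vertices $1$ and $5$ through the gap between the two central polygons, and $\beta$ the type-$1$ arc joining $3$ and $7$. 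These are admissible $2$-diagonals, they cross (the endpoints of $\beta$ lie on opposite sides of $\alpha$), and every ear whose two-vertex span meets $\{1,3,5,7\}$ is genuinely crossed: the relevant arc starts strictly inside the $(m+2)$-gon cut off by the ear, which contains neither central polygon, and must leave it to reach the gap. That kills exactly $4m=8$ ears, leaving only the ears $(7,10)$ and $(8,1)$, i.e.\ two survivors against the required $n-4=3$. For $m=1$ the deficit is worse: with $n=8$, $\alpha=(1,4)$, $\beta=(2,5)$ of type $1$, only two of the six $1$-ears avoid both, against $n-4=4$. So the sharpening you defer to (``interleaving forces the forbidden blocks to overlap'') does not exist; the crude bound $(n-6)m$ is attained by honestly crossing pairs with well-separated endpoints, and your plan cannot be completed for $m\le 2$ by any bookkeeping.

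This is also quite different from what the paper does. The paper makes no global count of ears: it notes that $\alpha$ and $\beta$ cut the boundary of $P$ into four parts and argues by pigeonhole that if no part contained an ear avoiding both arcs, each part would have at most $m-1$ free vertices, forcing $(n-2)m\le 4(m-1)+4=4m$, i.e.\ $n\le 6$. For $n>6$ this produces at least one surviving ear, which is what the subsequent reduction arguments actually consume (one cut at a time), but it does not establish the count $n-4$ either; your examples show that the full count $n-4$ is out of reach for $m\le 2$, so what your method really yields is the lemma for $m\ge 3$ (and $m=2$, $n\ge 8$) together with the weaker one-ear statement in general, and the $m\in\{1,2\}$ cases of the stated count remain unproven.
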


\begin{proof}
\cor{The cases where $\alpha$ or $\beta$ are of type different from $1$ are immediate, and left to the reader. The case where $\alpha$ and $\beta$ are of type one (it means, crossing the space between both inner polygons, see figure \ref{fig:cross}) is the most difficult. The arcs cut the polygon $P$ into $4$ parts. If we cannot draw an $m$-ear between one of the parts, it means that the number of vertices strictly contained in a part is at most $m-1$ in each part. Then the total number of vertices is at most $4(m-1)+4$. Then $(n-2)m \leq 4m$ this means $n \leq 6$.}
\end{proof}

\begin{figure}[!h]
\centering
\begin{tikzpicture}[scale=0.7]
\fill[fill=black,fill opacity=0.1] (0.,4.) -- (-1.78,3.42) -- (-2.87913480366,1.90451239418) -- (-2.87757227419,0.0324019384391) -- (-1.77590924476,-1.48124880383) -- (0.00505645156165,-2.05827669617) -- (1.78505645156,-1.47827669617) -- (2.88419125522,0.0372109096461) -- (2.88262872576,1.90932136539) -- (1.78096569632,3.42297210766) -- cycle;
\draw [fill=black,fill opacity=1.0] (-1.35763963744,0.969726407807) circle (0.4cm);
\draw [fill=black,fill opacity=1.0] (1.362696089,0.971996896021) circle (0.4cm);
\draw (0.,4.)-- (-1.78,3.42);
\draw (-1.78,3.42)-- (-2.87913480366,1.90451239418);
\draw (-2.87913480366,1.90451239418)-- (-2.87757227419,0.0324019384391);
\draw (-2.87757227419,0.0324019384391)-- (-1.77590924476,-1.48124880383);
\draw (-1.77590924476,-1.48124880383)-- (0.00505645156165,-2.05827669617);
\draw (0.00505645156165,-2.05827669617)-- (1.78505645156,-1.47827669617);
\draw (1.78505645156,-1.47827669617)-- (2.88419125522,0.0372109096461);
\draw (2.88419125522,0.0372109096461)-- (2.88262872576,1.90932136539);
\draw (2.88262872576,1.90932136539)-- (1.78096569632,3.42297210766);
\draw (1.78096569632,3.42297210766)-- (0.,4.);
\draw [shift={(4.20112235564,-0.389471074632)}] plot[domain=2.13641512908:3.52012091758,variable=\t]({1.*4.51573706872*cos(\t r)+0.*4.51573706872*sin(\t r)},{0.*4.51573706872*cos(\t r)+1.*4.51573706872*sin(\t r)});
\draw [shift={(-2.31999172307,-2.2314299915)}] plot[domain=0.18145201887:1.70517284349,variable=\t]({1.*4.17356686807*cos(\t r)+0.*4.17356686807*sin(\t r)},{0.*4.17356686807*cos(\t r)+1.*4.17356686807*sin(\t r)});
\begin{scriptsize}
\draw[color=black] (0.68232081142,1.93504417731) node {$\beta$};
\draw[color=black] (-0.579888054095,1.92001788129) node {$\alpha$};
\end{scriptsize}
\end{tikzpicture}
\caption{Two $m$-diagonals $\alpha$ and $\beta$ of type $1$, for $m=2$, $n=7$.}
\label{fig:cross}
\end{figure}

\begin{rmk}
If $n=5$ or $n=6$, then the only case where we cannot reduce to $\tilde{D}_4$ is when $\alpha$ and $\beta$ are of type $1$. But at this moment there exists $k < m$ such that $\alpha=\beta[k]$, then there exists a nonzero extension between $\alpha$ and $\beta$.
\end{rmk}

\begin{lem}\label{lem:arcs}
Let $\alpha$ and $\beta$ be two arcs in an $(m+2)$-angulation $\Delta$. Let $X_\alpha$ and $X_\beta$ be their associated $m$-rigid indecomposable objects. If for any $i \in \{ 1,\cdots,m \}$ we have  \[\mathrm{Ext}_\mathcal{C}^i(X_\alpha,X_\beta)=0,\] then $\alpha$ and $\beta$ do not cross.
\end{lem}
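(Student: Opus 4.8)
This is Theorem~\ref{th:cross} in type $\tilde D$, and I argue by contraposition as in the other three types: assuming that $\alpha$ and $\beta$ cross, I exhibit some $i\in\{1,\dots,m\}$ with $\mathrm{Ext}^i_\mathcal{C}(X_\alpha,X_\beta)\neq 0$. Two ingredients of this section do the work. First, cutting the figure along an $m$-ear, along an arc lying in a tube of size $n-2$ (Lemma~\ref{lem:succ}), or along a transjective arc is realized by an Iyama--Yoshino reduction, under which an $m$-diagonal not crossing the cut arc survives and is sent to the same $m$-rigid object (Lemma~\ref{lem:cut} and the lemma following it). Second, such a reduction preserves $\mathrm{Ext}^i_\mathcal{C}$ for $1\le i\le m$, that is $\mathcal{C}(X,Y[i])\cong\mathcal{C}'(X,Y\langle i\rangle)$ (Iyama--Yoshino \cite{IY}, together with Lemma~\ref{lem:shift}). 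Hence it suffices to produce a nonzero $\mathrm{Ext}^i$ after cutting along finitely many $m$-diagonals meeting neither $\alpha$ nor $\beta$: in the cut figure $\alpha$ and $\beta$ still cross and are still represented by $X_\alpha$ and $X_\beta$.

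First I would reduce $n$. If $n>6$, the lemma above that produces $n-4$ $m$-ears disjoint from $\alpha$ and $\beta$ lets me cut along one of them, replacing $\tilde D_n$ by $\tilde D_{n-1}$; iterating brings us to $n\le 6$. For $n\in\{5,6\}$ the accompanying remark shows that the only configuration which cannot be cut down further to $\tilde D_4$ is the one in which $\alpha$ and $\beta$ are both of type~$1$; but then $\alpha=\beta[k]$ with $1\le k<m$, so $X_\alpha\simeq X_\beta[k]$ and $\mathrm{Ext}^k_\mathcal{C}(X_\alpha,X_\beta)=\mathcal{C}(X_\alpha,X_\beta[k])\ni\mathrm{id}_{X_\alpha}\neq 0$. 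Everything thus reduces to the $2m$-gon with two $(m-1)$-gons inside realizing $\tilde D_4$, where, up to the symmetries of the figure, there are finitely many crossing pairs $(\alpha,\beta)$.

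For this base case I would go through those pairs using the classes of Definition~\ref{def:diago} (type~$1$, type~$2$, the four self-folded classes at the thick vertices, the class linking the two central polygons) and according to whether $X_\alpha$ is transjective or lies in a rank-$2$ tube. If $\alpha$ is transjective, the transjective-arc lemma already identifies the arcs crossing $\alpha$ with the $m$-rigid objects outside $\mathcal{U}$, settling that case; if $X_\alpha$ lies in a tube, Lemma~\ref{lem:succ} turns $\alpha$ into an $m$-ear after cutting along $m$-ears chosen so as to avoid $\beta$, and then Lemma~\ref{lem:cut} applies. In the remaining configurations I would invoke Lemma~\ref{lem:morph}: I build by hand an $(m+2)$-angulation $\Delta$ of $\tilde D_4$ containing $\alpha$ but not $\beta$ with $\beta=\kappa^i_\Delta(\alpha)$ for some $i\in\{1,\dots,m-1\}$, which gives $\mathrm{Ext}^i_\mathcal{C}(X_\alpha,X_\beta)\neq 0$. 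If no such $\Delta$ is visible, I first shift $\beta$ by some $k<m$ so that it shares an endpoint or an oriented angle with $\alpha$ --- using the $(m+1)$-Calabi--Yau symmetry to choose which endpoint of $\alpha$ to act on, as in Remark~\ref{rmk:shift} --- which puts $\beta[k]$ in position either to apply Lemma~\ref{lem:morph} or to read off a nonzero morphism $X_\alpha\to X_\beta[k]$.

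The main obstacle is precisely this $\tilde D_4$ bookkeeping. In so small a figure an $m$-ear avoiding both $\alpha$ and $\beta$ need not exist, so the clean recursion used in type~$\tilde A$ does not run on its own, and every residual configuration must be settled by an explicit $\Delta$ in Lemma~\ref{lem:morph} or by the shift argument. The hardest cases are the self-folded arcs tangent to the thick vertices and the arcs linking the two central $(m-1)$-gons, where the twist $\kappa_\Delta$ interacts with the rotations of the central polygons and with the structure of the rank-$2$ tubes; there I would, when necessary, inspect directly the finite transjective part and the three rank-$2$ tubes of the Auslander--Reiten quiver of $\mathcal{C}^m_{\tilde D_4}$, locating a nonzero morphism $X_\alpha\to X_\beta[k]$ from dimension-vector computations in the spirit of Lemma~\ref{lem:cut} and of Dlab--Ringel \cite{DR}.
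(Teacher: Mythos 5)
Your proposal follows essentially the same route as the paper: reduce to $\tilde D_4$ by cutting along $m$-ears via Iyama--Yoshino reduction (using the $n>6$ lemma and the $n=5,6$ remark), then treat the crossing pairs in the $2m$-gon case by case, using the shift/oriented-angle argument on slices of the Auslander--Reiten quiver, Lemma~\ref{lem:morph} for the mixed transjective/tube configurations, and Lemmas~\ref{lem:cut} and~\ref{lem:succ} for the cases they already cover. The only difference is that the paper makes the $\tilde D_4$ bookkeeping explicit as a table of nine cases, whereas you leave that enumeration as an outline, but the tools you assign to each type of configuration are exactly those the paper uses.
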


\begin{proof}
\cor{Taken into account both previous Lemmas, we only have to show the result for $\tilde{D}_4$. Indeed, if $n>4$, then it suffices to draw $m$-ears, and to aaply the Iyama-Yoshino reduction along the $m$-rigid indecomposable objects corresponding to these $m$-diagonals, to reduce to case $n=4$.}

Let now $P$ be a polygon with $2m$ vertices. The inner polygons which characterize case $\tilde{D}$ contain $m-1$ vertices.

We recall that we are in the case where $n=4$, it means that we study a $2m$-gon.
We show that if $\alpha$ crosses $\beta$, then \[{\mathrm{Ext}}_\mathcal{C}^i(X_\alpha,X_\beta) \neq 0, \text{ for some } i \in \{1,\cdots,n\}.\]

Suppose that $\alpha$ and $\beta$ cross. Then both arcs can be of different type. Let us sum up all the cases to treat in the following tabular:

\begin{center}
\begin{tabular}{|c|c|c|c|c|}
\hline
$\alpha$, $\beta$ & \begin{tikzpicture}[scale=0.3]
\fill[dash pattern=on 3pt off 3pt,fill=black,fill opacity=0.1] (0,4) -- (-2.48,2.98) -- (-3.51,0.51) -- (-2.49,-1.97) -- (-0.02,-3.01) -- (2.46,-1.99) -- (3.49,0.49) -- (2.47,2.97) -- cycle;
\fill[fill=black,fill opacity=1.0] (1.41,2.37) -- (1.21,2.14) -- (1.32,1.85) -- (1.61,1.8) -- (1.81,2.03) -- (1.71,2.32) -- cycle;
\fill[fill=black,fill opacity=1.0] (-1.89,-0.91) -- (-1.66,-0.72) -- (-1.37,-0.82) -- (-1.32,-1.12) -- (-1.55,-1.31) -- (-1.84,-1.21) -- cycle;
\draw (0,4)-- (-2.48,2.98);
\draw [dash pattern=on 3pt off 3pt] (-2.48,2.98)-- (-3.51,0.51);
\draw (-3.51,0.51)-- (-2.49,-1.97);
\draw [dash pattern=on 3pt off 3pt] (-2.49,-1.97)-- (-0.02,-3.01);
\draw (-0.02,-3.01)-- (2.46,-1.99);
\draw [dash pattern=on 3pt off 3pt] (2.46,-1.99)-- (3.49,0.49);
\draw (3.49,0.49)-- (2.47,2.97);
\draw [dash pattern=on 3pt off 3pt] (2.47,2.97)-- (0,4);
\draw (0.21,4.5)-- (2.68,3.46);
\draw (2.68,3.46)-- (2.61,3.62);
\draw (2.68,3.46)-- (2.52,3.4);
\draw (0.21,4.5)-- (0.28,4.34);
\draw (0.21,4.5)-- (0.37,4.56);
\draw (1.41,2.37)-- (1.21,2.14);
\draw (1.21,2.14)-- (1.32,1.85);
\draw (1.32,1.85)-- (1.61,1.8);
\draw (1.61,1.8)-- (1.81,2.03);
\draw (1.81,2.03)-- (1.71,2.32);
\draw (1.71,2.32)-- (1.41,2.37);
\draw (0,4)-- (-0.02,-3.01);
\begin{scriptsize}
\draw[color=black] (1.58,4.14) node {$m$};
\end{scriptsize}
\end{tikzpicture} & \begin{tikzpicture}[scale=0.3]
\fill[dash pattern=on 3pt off 3pt,fill=black,fill opacity=0.1] (0,4) -- (-2.48,2.98) -- (-3.51,0.51) -- (-2.49,-1.97) -- (-0.02,-3.01) -- (2.46,-1.99) -- (3.49,0.49) -- (2.47,2.97) -- cycle;
\fill[fill=black,fill opacity=1.0] (1.41,2.37) -- (1.21,2.14) -- (1.32,1.85) -- (1.61,1.8) -- (1.81,2.03) -- (1.71,2.32) -- cycle;
\fill[fill=black,fill opacity=1.0] (-1.89,-0.91) -- (-1.66,-0.72) -- (-1.37,-0.82) -- (-1.32,-1.12) -- (-1.55,-1.31) -- (-1.84,-1.21) -- cycle;
\draw (0,4)-- (-2.48,2.98);
\draw [dash pattern=on 3pt off 3pt] (-2.48,2.98)-- (-3.51,0.51);
\draw (-3.51,0.51)-- (-2.49,-1.97);
\draw [dash pattern=on 3pt off 3pt] (-2.49,-1.97)-- (-0.02,-3.01);
\draw (-0.02,-3.01)-- (2.46,-1.99);
\draw [dash pattern=on 3pt off 3pt] (2.46,-1.99)-- (3.49,0.49);
\draw (3.49,0.49)-- (2.47,2.97);
\draw [dash pattern=on 3pt off 3pt] (2.47,2.97)-- (0,4);
\draw (0.21,4.5)-- (2.68,3.46);
\draw (2.68,3.46)-- (2.61,3.62);
\draw (2.68,3.46)-- (2.52,3.4);
\draw (0.21,4.5)-- (0.28,4.34);
\draw (0.21,4.5)-- (0.37,4.56);
\draw (1.41,2.37)-- (1.21,2.14);
\draw (1.21,2.14)-- (1.32,1.85);
\draw (1.32,1.85)-- (1.61,1.8);
\draw (1.61,1.8)-- (1.81,2.03);
\draw (1.81,2.03)-- (1.71,2.32);
\draw (1.71,2.32)-- (1.41,2.37);
\draw [shift={(-4.93,1.62)}] plot[domain=-0.76:0.06,variable=\t]({1*6.75*cos(\t r)+0*6.75*sin(\t r)},{0*6.75*cos(\t r)+1*6.75*sin(\t r)});
\begin{scriptsize}
\draw[color=black] (1.58,4.14) node {$m$};
\end{scriptsize}
\end{tikzpicture} & \begin{tikzpicture}[scale=0.3]
\fill[dash pattern=on 3pt off 3pt,fill=black,fill opacity=0.1] (0,4) -- (-2.48,2.98) -- (-3.51,0.51) -- (-2.49,-1.97) -- (-0.02,-3.01) -- (2.46,-1.99) -- (3.49,0.49) -- (2.47,2.97) -- cycle;
\fill[fill=black,fill opacity=1.0] (1.41,2.37) -- (1.21,2.14) -- (1.32,1.85) -- (1.61,1.8) -- (1.81,2.03) -- (1.71,2.32) -- cycle;
\fill[fill=black,fill opacity=1.0] (-1.89,-0.91) -- (-1.66,-0.72) -- (-1.37,-0.82) -- (-1.32,-1.12) -- (-1.55,-1.31) -- (-1.84,-1.21) -- cycle;
\draw (0,4)-- (-2.48,2.98);
\draw [dash pattern=on 3pt off 3pt] (-2.48,2.98)-- (-3.51,0.51);
\draw (-3.51,0.51)-- (-2.49,-1.97);
\draw [dash pattern=on 3pt off 3pt] (-2.49,-1.97)-- (-0.02,-3.01);
\draw (-0.02,-3.01)-- (2.46,-1.99);
\draw [dash pattern=on 3pt off 3pt] (2.46,-1.99)-- (3.49,0.49);
\draw (3.49,0.49)-- (2.47,2.97);
\draw [dash pattern=on 3pt off 3pt] (2.47,2.97)-- (0,4);
\draw (0.21,4.5)-- (2.68,3.46);
\draw (2.68,3.46)-- (2.61,3.62);
\draw (2.68,3.46)-- (2.52,3.4);
\draw (0.21,4.5)-- (0.28,4.34);
\draw (0.21,4.5)-- (0.37,4.56);
\draw (1.41,2.37)-- (1.21,2.14);
\draw (1.21,2.14)-- (1.32,1.85);
\draw (1.32,1.85)-- (1.61,1.8);
\draw (1.61,1.8)-- (1.81,2.03);
\draw (1.81,2.03)-- (1.71,2.32);
\draw (1.71,2.32)-- (1.41,2.37);
\draw (-1.65,-0.75)-- (1.61,1.8);
\begin{scriptsize}
\draw[color=black] (1.58,4.14) node {$m$};
\end{scriptsize}
\end{tikzpicture} & \begin{tikzpicture}[scale=0.3]
\fill[dash pattern=on 3pt off 3pt,fill=black,fill opacity=0.1] (0,4) -- (-2.48,2.98) -- (-3.51,0.51) -- (-2.49,-1.97) -- (-0.02,-3.01) -- (2.46,-1.99) -- (3.49,0.49) -- (2.47,2.97) -- cycle;
\fill[fill=black,fill opacity=1.0] (1.41,2.37) -- (1.21,2.14) -- (1.32,1.85) -- (1.61,1.8) -- (1.81,2.03) -- (1.71,2.32) -- cycle;
\fill[fill=black,fill opacity=1.0] (-1.89,-0.91) -- (-1.66,-0.72) -- (-1.37,-0.82) -- (-1.32,-1.12) -- (-1.55,-1.31) -- (-1.84,-1.21) -- cycle;
\draw (0,4)-- (-2.48,2.98);
\draw [dash pattern=on 3pt off 3pt] (-2.48,2.98)-- (-3.51,0.51);
\draw (-3.51,0.51)-- (-2.49,-1.97);
\draw [dash pattern=on 3pt off 3pt] (-2.49,-1.97)-- (-0.02,-3.01);
\draw (-0.02,-3.01)-- (2.46,-1.99);
\draw [dash pattern=on 3pt off 3pt] (2.46,-1.99)-- (3.49,0.49);
\draw (3.49,0.49)-- (2.47,2.97);
\draw [dash pattern=on 3pt off 3pt] (2.47,2.97)-- (0,4);
\draw (0.21,4.5)-- (2.68,3.46);
\draw (2.68,3.46)-- (2.61,3.62);
\draw (2.68,3.46)-- (2.52,3.4);
\draw (0.21,4.5)-- (0.28,4.34);
\draw (0.21,4.5)-- (0.37,4.56);
\draw (1.41,2.37)-- (1.21,2.14);
\draw (1.21,2.14)-- (1.32,1.85);
\draw (1.32,1.85)-- (1.61,1.8);
\draw (1.61,1.8)-- (1.81,2.03);
\draw (1.81,2.03)-- (1.71,2.32);
\draw (1.71,2.32)-- (1.41,2.37);
\draw [shift={(2.12,-0.39)}] plot[domain=2.02:3.47,variable=\t]({1*4.88*cos(\t r)+0*4.88*sin(\t r)},{0*4.88*cos(\t r)+1*4.88*sin(\t r)});
\begin{scriptsize}
\draw[color=black] (1.58,4.14) node {$m$};
\end{scriptsize}
\end{tikzpicture} \\
\hline
\begin{tikzpicture}[scale=0.3]
\fill[dash pattern=on 3pt off 3pt,fill=black,fill opacity=0.1] (0,4) -- (-2.48,2.98) -- (-3.51,0.51) -- (-2.49,-1.97) -- (-0.02,-3.01) -- (2.46,-1.99) -- (3.49,0.49) -- (2.47,2.97) -- cycle;
\fill[fill=black,fill opacity=1.0] (1.41,2.37) -- (1.21,2.14) -- (1.32,1.85) -- (1.61,1.8) -- (1.81,2.03) -- (1.71,2.32) -- cycle;
\fill[fill=black,fill opacity=1.0] (-1.89,-0.91) -- (-1.66,-0.72) -- (-1.37,-0.82) -- (-1.32,-1.12) -- (-1.55,-1.31) -- (-1.84,-1.21) -- cycle;
\draw (0,4)-- (-2.48,2.98);
\draw [dash pattern=on 3pt off 3pt] (-2.48,2.98)-- (-3.51,0.51);
\draw (-3.51,0.51)-- (-2.49,-1.97);
\draw [dash pattern=on 3pt off 3pt] (-2.49,-1.97)-- (-0.02,-3.01);
\draw (-0.02,-3.01)-- (2.46,-1.99);
\draw [dash pattern=on 3pt off 3pt] (2.46,-1.99)-- (3.49,0.49);
\draw (3.49,0.49)-- (2.47,2.97);
\draw [dash pattern=on 3pt off 3pt] (2.47,2.97)-- (0,4);
\draw (0.21,4.5)-- (2.68,3.46);
\draw (2.68,3.46)-- (2.61,3.62);
\draw (2.68,3.46)-- (2.52,3.4);
\draw (0.21,4.5)-- (0.28,4.34);
\draw (0.21,4.5)-- (0.37,4.56);
\draw (1.41,2.37)-- (1.21,2.14);
\draw (1.21,2.14)-- (1.32,1.85);
\draw (1.32,1.85)-- (1.61,1.8);
\draw (1.61,1.8)-- (1.81,2.03);
\draw (1.81,2.03)-- (1.71,2.32);
\draw (1.71,2.32)-- (1.41,2.37);
\draw (0,4)-- (-0.02,-3.01);
\begin{scriptsize}
\draw[color=black] (1.58,4.14) node {$m$};
\end{scriptsize}
\end{tikzpicture} & Case $1$ & Case $2$ & Case $3$ & Case $4$ \\
\hline
\begin{tikzpicture}[scale=0.3]
\fill[dash pattern=on 3pt off 3pt,fill=black,fill opacity=0.1] (0,4) -- (-2.48,2.98) -- (-3.51,0.51) -- (-2.49,-1.97) -- (-0.02,-3.01) -- (2.46,-1.99) -- (3.49,0.49) -- (2.47,2.97) -- cycle;
\fill[fill=black,fill opacity=1.0] (1.41,2.37) -- (1.21,2.14) -- (1.32,1.85) -- (1.61,1.8) -- (1.81,2.03) -- (1.71,2.32) -- cycle;
\fill[fill=black,fill opacity=1.0] (-1.89,-0.91) -- (-1.66,-0.72) -- (-1.37,-0.82) -- (-1.32,-1.12) -- (-1.55,-1.31) -- (-1.84,-1.21) -- cycle;
\draw (0,4)-- (-2.48,2.98);
\draw [dash pattern=on 3pt off 3pt] (-2.48,2.98)-- (-3.51,0.51);
\draw (-3.51,0.51)-- (-2.49,-1.97);
\draw [dash pattern=on 3pt off 3pt] (-2.49,-1.97)-- (-0.02,-3.01);
\draw (-0.02,-3.01)-- (2.46,-1.99);
\draw [dash pattern=on 3pt off 3pt] (2.46,-1.99)-- (3.49,0.49);
\draw (3.49,0.49)-- (2.47,2.97);
\draw [dash pattern=on 3pt off 3pt] (2.47,2.97)-- (0,4);
\draw (0.21,4.5)-- (2.68,3.46);
\draw (2.68,3.46)-- (2.61,3.62);
\draw (2.68,3.46)-- (2.52,3.4);
\draw (0.21,4.5)-- (0.28,4.34);
\draw (0.21,4.5)-- (0.37,4.56);
\draw (1.41,2.37)-- (1.21,2.14);
\draw (1.21,2.14)-- (1.32,1.85);
\draw (1.32,1.85)-- (1.61,1.8);
\draw (1.61,1.8)-- (1.81,2.03);
\draw (1.81,2.03)-- (1.71,2.32);
\draw (1.71,2.32)-- (1.41,2.37);
\draw [shift={(-4.93,1.62)}] plot[domain=-0.76:0.06,variable=\t]({1*6.75*cos(\t r)+0*6.75*sin(\t r)},{0*6.75*cos(\t r)+1*6.75*sin(\t r)});
\begin{scriptsize}
\draw[color=black] (1.58,4.14) node {$m$};
\end{scriptsize}
\end{tikzpicture} & Case $2$ & Case $5$ & Case $6$ & Case $7$ \\
\hline
\begin{tikzpicture}[scale=0.3]
\fill[dash pattern=on 3pt off 3pt,fill=black,fill opacity=0.1] (0,4) -- (-2.48,2.98) -- (-3.51,0.51) -- (-2.49,-1.97) -- (-0.02,-3.01) -- (2.46,-1.99) -- (3.49,0.49) -- (2.47,2.97) -- cycle;
\fill[fill=black,fill opacity=1.0] (1.41,2.37) -- (1.21,2.14) -- (1.32,1.85) -- (1.61,1.8) -- (1.81,2.03) -- (1.71,2.32) -- cycle;
\fill[fill=black,fill opacity=1.0] (-1.89,-0.91) -- (-1.66,-0.72) -- (-1.37,-0.82) -- (-1.32,-1.12) -- (-1.55,-1.31) -- (-1.84,-1.21) -- cycle;
\draw (0,4)-- (-2.48,2.98);
\draw [dash pattern=on 3pt off 3pt] (-2.48,2.98)-- (-3.51,0.51);
\draw (-3.51,0.51)-- (-2.49,-1.97);
\draw [dash pattern=on 3pt off 3pt] (-2.49,-1.97)-- (-0.02,-3.01);
\draw (-0.02,-3.01)-- (2.46,-1.99);
\draw [dash pattern=on 3pt off 3pt] (2.46,-1.99)-- (3.49,0.49);
\draw (3.49,0.49)-- (2.47,2.97);
\draw [dash pattern=on 3pt off 3pt] (2.47,2.97)-- (0,4);
\draw (0.21,4.5)-- (2.68,3.46);
\draw (2.68,3.46)-- (2.61,3.62);
\draw (2.68,3.46)-- (2.52,3.4);
\draw (0.21,4.5)-- (0.28,4.34);
\draw (0.21,4.5)-- (0.37,4.56);
\draw (1.41,2.37)-- (1.21,2.14);
\draw (1.21,2.14)-- (1.32,1.85);
\draw (1.32,1.85)-- (1.61,1.8);
\draw (1.61,1.8)-- (1.81,2.03);
\draw (1.81,2.03)-- (1.71,2.32);
\draw (1.71,2.32)-- (1.41,2.37);
\draw (-1.65,-0.75)-- (1.61,1.8);
\begin{scriptsize}
\draw[color=black] (1.58,4.14) node {$m$};
\end{scriptsize}
\end{tikzpicture} & Case $3$ & Case $6$ & Case $8$ & Impossible \\
\hline
\begin{tikzpicture}[scale=0.3]
\fill[dash pattern=on 3pt off 3pt,fill=black,fill opacity=0.1] (0,4) -- (-2.48,2.98) -- (-3.51,0.51) -- (-2.49,-1.97) -- (-0.02,-3.01) -- (2.46,-1.99) -- (3.49,0.49) -- (2.47,2.97) -- cycle;
\fill[fill=black,fill opacity=1.0] (1.41,2.37) -- (1.21,2.14) -- (1.32,1.85) -- (1.61,1.8) -- (1.81,2.03) -- (1.71,2.32) -- cycle;
\fill[fill=black,fill opacity=1.0] (-1.89,-0.91) -- (-1.66,-0.72) -- (-1.37,-0.82) -- (-1.32,-1.12) -- (-1.55,-1.31) -- (-1.84,-1.21) -- cycle;
\draw (0,4)-- (-2.48,2.98);
\draw [dash pattern=on 3pt off 3pt] (-2.48,2.98)-- (-3.51,0.51);
\draw (-3.51,0.51)-- (-2.49,-1.97);
\draw [dash pattern=on 3pt off 3pt] (-2.49,-1.97)-- (-0.02,-3.01);
\draw (-0.02,-3.01)-- (2.46,-1.99);
\draw [dash pattern=on 3pt off 3pt] (2.46,-1.99)-- (3.49,0.49);
\draw (3.49,0.49)-- (2.47,2.97);
\draw [dash pattern=on 3pt off 3pt] (2.47,2.97)-- (0,4);
\draw (0.21,4.5)-- (2.68,3.46);
\draw (2.68,3.46)-- (2.61,3.62);
\draw (2.68,3.46)-- (2.52,3.4);
\draw (0.21,4.5)-- (0.28,4.34);
\draw (0.21,4.5)-- (0.37,4.56);
\draw (1.41,2.37)-- (1.21,2.14);
\draw (1.21,2.14)-- (1.32,1.85);
\draw (1.32,1.85)-- (1.61,1.8);
\draw (1.61,1.8)-- (1.81,2.03);
\draw (1.81,2.03)-- (1.71,2.32);
\draw (1.71,2.32)-- (1.41,2.37);
\draw [shift={(2.12,-0.39)}] plot[domain=2.02:3.47,variable=\t]({1*4.88*cos(\t r)+0*4.88*sin(\t r)},{0*4.88*cos(\t r)+1*4.88*sin(\t r)});
\begin{scriptsize}
\draw[color=black] (1.58,4.14) node {$m$};
\end{scriptsize}
\end{tikzpicture} & Case $4$ & Case $7$ & Impossible & Case $9$ \\
\hline
\end{tabular}
\end{center}

\vspace{10pt}

\cor{Here again, the cases are symmetric. We choose an orientation without loss of generality.}

First, we have to notice that cases $4$,$7$,$9$ are already treated from lemmas \ref{lem:cut} and \ref{lem:succ}.

Case $1$: $\alpha$ and $\beta$ are of type $1$ (cf figure \ref{fig:t1}).

\begin{figure}[!h]
\centering
\begin{tikzpicture}[scale=0.6]
\fill[fill=black,fill opacity=0.1] (0,4) -- (-1.66,3.46) -- (-2.69,2.05) -- (-2.68,0.3) -- (-1.66,-1.11) -- (0,-1.65) -- (1.66,-1.11) -- (2.69,0.3) -- (2.69,2.05) -- (1.66,3.46) -- cycle;
\draw [fill=black,fill opacity=1.0] (-1.66,1.57) circle (0.2cm);
\draw [fill=black,fill opacity=1.0] (1.66,1.58) circle (0.2cm);
\draw (0,4)-- (-1.66,3.46);
\draw (-1.66,3.46)-- (-2.69,2.05);
\draw (-2.69,2.05)-- (-2.68,0.3);
\draw (-2.68,0.3)-- (-1.66,-1.11);
\draw (-1.66,-1.11)-- (0,-1.65);
\draw (0,-1.65)-- (1.66,-1.11);
\draw (1.66,-1.11)-- (2.69,0.3);
\draw (2.69,0.3)-- (2.69,2.05);
\draw (2.69,2.05)-- (1.66,3.46);
\draw (1.66,3.46)-- (0,4);
\draw [shift={(-8.03,-1.44)}] plot[domain=-0.03:0.66,variable=\t]({1*8.03*cos(\t r)+0*8.03*sin(\t r)},{0*8.03*cos(\t r)+1*8.03*sin(\t r)});
\draw(-1.66,1.17) circle (0.4cm);
\draw(1.66,1.18) circle (0.4cm);
\draw [shift={(1.25,1.99)}] plot[domain=-2.33:0.04,variable=\t]({1*1.44*cos(\t r)+0*1.44*sin(\t r)},{0*1.44*cos(\t r)+1*1.44*sin(\t r)});
\draw [shift={(-2.6,-2.2)}] plot[domain=0.83:1.59,variable=\t]({1*4.25*cos(\t r)+0*4.25*sin(\t r)},{0*4.25*cos(\t r)+1*4.25*sin(\t r)});
\end{tikzpicture}
\caption{First case, $\alpha$ and $\beta$ are of type $1$}
\label{fig:t1}
\end{figure}

\cor{Let $i$ (respectively $j$) be the end of $\alpha$ (respectively $\beta$) such that $|j-i|$ is the minimal number of vertices of $P$ from an end of $\alpha$ to an end of $\beta$.}

\cor{We have $|j-i| \leq m+1$. Indeed, as $|j-i|$ is the minimal distance, and as $\alpha$ and $\beta$ cut $P$ into $4$ parts, the number of vertices of $P$ would be superior to $4m+4$. This is impossible since $P$ has $2m$ sides.}

\cor{Let $k=|j-i|$. Then one end of $\beta[k]$ is $i$, as $\alpha$. As they are of type $1$, and share an oriented angle, they are on the same slice of the Auslander-Reiten quiver. Then, this is a nonzero composition of arrows. In this way, we have found a nonzero morphism from $\alpha$ to $\beta[k]$. Then ${\mathrm{Ext}}_\mathcal{C}^k(X_\alpha,X_\beta) \neq 0$.}

\vspace{10pt}

Case $2$: $\alpha$ is of type $1$ and $\beta$ is of type $3$ (cf figure \ref{fig:t2}). Let $j$ be the end of $\beta$ on $P$, and let $i$ be the end of $\alpha$ which is closest to $i$ (in terms of vertices of $P$).

\begin{figure}[!h]
\centering
\begin{tikzpicture}[scale=0.6]
\fill[fill=black,fill opacity=0.1] (0,4) -- (-1.66,3.46) -- (-2.69,2.05) -- (-2.68,0.3) -- (-1.66,-1.11) -- (0,-1.65) -- (1.66,-1.11) -- (2.69,0.3) -- (2.69,2.05) -- (1.66,3.46) -- cycle;
\draw [fill=black,fill opacity=1.0] (-1.66,1.57) circle (0.2cm);
\draw [fill=black,fill opacity=1.0] (1.66,1.58) circle (0.2cm);
\draw (0,4)-- (-1.66,3.46);
\draw (-1.66,3.46)-- (-2.69,2.05);
\draw (-2.69,2.05)-- (-2.68,0.3);
\draw (-2.68,0.3)-- (-1.66,-1.11);
\draw (-1.66,-1.11)-- (0,-1.65);
\draw (0,-1.65)-- (1.66,-1.11);
\draw (1.66,-1.11)-- (2.69,0.3);
\draw (2.69,0.3)-- (2.69,2.05);
\draw (2.69,2.05)-- (1.66,3.46);
\draw (1.66,3.46)-- (0,4);
\draw [shift={(-8.03,-1.44)}] plot[domain=-0.03:0.66,variable=\t]({1*8.03*cos(\t r)+0*8.03*sin(\t r)},{0*8.03*cos(\t r)+1*8.03*sin(\t r)});
\draw(-1.66,1.17) circle (0.4cm);
\draw(1.66,1.18) circle (0.4cm);
\draw [shift={(1.98,-1.95)}] plot[domain=1.67:2.92,variable=\t]({1*3.74*cos(\t r)+0*3.74*sin(\t r)},{0*3.74*cos(\t r)+1*3.74*sin(\t r)});
\end{tikzpicture}
\caption{Case where $\alpha$ and $\beta$ are not of the same type}
\label{fig:t2}
\end{figure}

\cor{Here again, we have $|j-i| \leq m+1$. Indeed, if it was not the case, as $|j-i|$ is the minimal distance, the $m$-diagonal $\alpha$ would cut the polygon into a $l$-gon, the number $l$ begin strictly superior to $2m+2$. Then the number of vertices of $P$ would be superior to $4m+4$. This is impossible since $P$ has $2m$ sides.}

This case is similar to that of the first one. Let $k=|j-i|\leq m$. It suffices to shift $\beta$ $k<m$ times in order to hang both arcs to the same vertex. Consequently, they do not cross a mesh in the Auslander-Reiten quiver. Then, there is a Hom-hammock from one to another. Then there is a nonzero extension from $\alpha$ to $\beta$.

\begin{figure}[!h]
\centering
\begin{tikzpicture}[scale=0.3]
\fill[dash pattern=on 3pt off 3pt,fill=black,fill opacity=0.1] (0,4) -- (-2.48,2.98) -- (-3.51,0.51) -- (-2.49,-1.97) -- (-0.02,-3.01) -- (2.46,-1.99) -- (3.49,0.49) -- (2.47,2.97) -- cycle;
\fill[fill=black,fill opacity=1.0] (1.41,2.37) -- (1.21,2.14) -- (1.32,1.85) -- (1.61,1.8) -- (1.81,2.03) -- (1.71,2.32) -- cycle;
\fill[fill=black,fill opacity=1.0] (-1.89,-0.91) -- (-1.66,-0.72) -- (-1.37,-0.82) -- (-1.32,-1.12) -- (-1.55,-1.31) -- (-1.84,-1.21) -- cycle;
\draw (0,4)-- (-2.48,2.98);
\draw [dash pattern=on 3pt off 3pt] (-2.48,2.98)-- (-3.51,0.51);
\draw (-3.51,0.51)-- (-2.49,-1.97);
\draw [dash pattern=on 3pt off 3pt] (-2.49,-1.97)-- (-0.02,-3.01);
\draw (-0.02,-3.01)-- (2.46,-1.99);
\draw [dash pattern=on 3pt off 3pt] (2.46,-1.99)-- (3.49,0.49);
\draw (3.49,0.49)-- (2.47,2.97);
\draw [dash pattern=on 3pt off 3pt] (2.47,2.97)-- (0,4);
\draw (0.21,4.5)-- (2.68,3.46);
\draw (2.68,3.46)-- (2.61,3.62);
\draw (2.68,3.46)-- (2.52,3.4);
\draw (0.21,4.5)-- (0.28,4.34);
\draw (0.21,4.5)-- (0.37,4.56);
\draw (1.41,2.37)-- (1.21,2.14);
\draw (1.21,2.14)-- (1.32,1.85);
\draw (1.32,1.85)-- (1.61,1.8);
\draw (1.61,1.8)-- (1.81,2.03);
\draw (1.81,2.03)-- (1.71,2.32);
\draw (1.71,2.32)-- (1.41,2.37);
\draw [shift={(7.63,1.41)}]  plot[domain=-0.78:1,variable=\t]({-1*7.43*cos(\t r)+-0.09*1.54*sin(\t r)},{0.09*7.43*cos(\t r)+-1*1.54*sin(\t r)});
\draw [shift={(2.85,1.03)}] plot[domain=1.76:2.55,variable=\t]({1*1.98*cos(\t r)+0*1.98*sin(\t r)},{0*1.98*cos(\t r)+1*1.98*sin(\t r)});
\draw [shift={(1.91,0.49)}] plot[domain=1.35:4.93,variable=\t]({1*2.54*cos(\t r)+0*2.54*sin(\t r)},{0*2.54*cos(\t r)+1*2.54*sin(\t r)});
\draw [->] (-0.3,1.71) -- (0.42,1.69);
\draw [->] (1.4,2.79) -- (1.58,2.54);
\begin{scriptsize}
\draw[color=black] (1.58,4.14) node {$m$};
\draw[color=black] (2.16,2.63) node {$\beta$};
\draw[color=black] (-0.3,0.68) node {$\alpha$};
\end{scriptsize}
\end{tikzpicture}
\caption{The arrows are elementary move. The nonzero extension corresponds to the composition of the arrows}
\label{fig:emm}
\end{figure}

\vspace{10pt}

Case $3$: We are in the situation of figure \ref{fig:t21}

\begin{figure}[!h]
\centering
\begin{minipage}[t]{7cm}
\centering
\begin{tikzpicture}[scale=0.5]
\fill[dash pattern=on 3pt off 3pt,fill=black,fill opacity=0.1] (0,4) -- (-2.48,2.98) -- (-3.51,0.51) -- (-2.49,-1.97) -- (-0.02,-3.01) -- (2.46,-1.99) -- (3.49,0.49) -- (2.47,2.97) -- cycle;
\fill[fill=black,fill opacity=1.0] (1.41,2.37) -- (1.21,2.14) -- (1.32,1.85) -- (1.61,1.8) -- (1.81,2.03) -- (1.71,2.32) -- cycle;
\fill[fill=black,fill opacity=1.0] (-1.89,-0.91) -- (-1.66,-0.72) -- (-1.37,-0.82) -- (-1.32,-1.12) -- (-1.55,-1.31) -- (-1.84,-1.21) -- cycle;
\draw (0,4)-- (-2.48,2.98);
\draw [dash pattern=on 3pt off 3pt] (-2.48,2.98)-- (-3.51,0.51);
\draw (-3.51,0.51)-- (-2.49,-1.97);
\draw [dash pattern=on 3pt off 3pt] (-2.49,-1.97)-- (-0.02,-3.01);
\draw (-0.02,-3.01)-- (2.46,-1.99);
\draw [dash pattern=on 3pt off 3pt] (2.46,-1.99)-- (3.49,0.49);
\draw (3.49,0.49)-- (2.47,2.97);
\draw [dash pattern=on 3pt off 3pt] (2.47,2.97)-- (0,4);
\draw (0.21,4.5)-- (2.68,3.46);
\draw (2.68,3.46)-- (2.61,3.62);
\draw (2.68,3.46)-- (2.52,3.4);
\draw (0.21,4.5)-- (0.28,4.34);
\draw (0.21,4.5)-- (0.37,4.56);
\draw (1.41,2.37)-- (1.21,2.14);
\draw (1.21,2.14)-- (1.32,1.85);
\draw (1.32,1.85)-- (1.61,1.8);
\draw (1.61,1.8)-- (1.81,2.03);
\draw (1.81,2.03)-- (1.71,2.32);
\draw (1.71,2.32)-- (1.41,2.37);
\draw (-1.36,-0.85)-- (1.61,1.8);
\draw (-2.48,2.98)-- (2.46,-1.99);
\begin{scriptsize}
\draw[color=black] (1.58,4.14) node {$m$};
\end{scriptsize}
\end{tikzpicture}
\caption{Case where $\alpha$ is of type $1$ and $\beta$ is in the tube}
\label{fig:t21}
\end{minipage}
\begin{minipage}[t]{7cm}
\centering
\begin{tikzpicture}[scale=0.5]
\fill[dash pattern=on 3pt off 3pt,fill=black,fill opacity=0.1] (0,4) -- (-2.48,2.98) -- (-3.51,0.51) -- (-2.49,-1.97) -- (-0.02,-3.01) -- (2.46,-1.99) -- (3.49,0.49) -- (2.47,2.97) -- cycle;
\fill[fill=black,fill opacity=1.0] (1.41,2.37) -- (1.21,2.14) -- (1.32,1.85) -- (1.61,1.8) -- (1.81,2.03) -- (1.71,2.32) -- cycle;
\fill[fill=black,fill opacity=1.0] (-1.89,-0.91) -- (-1.66,-0.72) -- (-1.37,-0.82) -- (-1.32,-1.12) -- (-1.55,-1.31) -- (-1.84,-1.21) -- cycle;
\draw (0,4)-- (-2.48,2.98);
\draw [dash pattern=on 3pt off 3pt] (-2.48,2.98)-- (-3.51,0.51);
\draw (-3.51,0.51)-- (-2.49,-1.97);
\draw [dash pattern=on 3pt off 3pt] (-2.49,-1.97)-- (-0.02,-3.01);
\draw (-0.02,-3.01)-- (2.46,-1.99);
\draw [dash pattern=on 3pt off 3pt] (2.46,-1.99)-- (3.49,0.49);
\draw (3.49,0.49)-- (2.47,2.97);
\draw [dash pattern=on 3pt off 3pt] (2.47,2.97)-- (0,4);
\draw (0.21,4.5)-- (2.68,3.46);
\draw (2.68,3.46)-- (2.61,3.62);
\draw (2.68,3.46)-- (2.52,3.4);
\draw (0.21,4.5)-- (0.28,4.34);
\draw (0.21,4.5)-- (0.37,4.56);
\draw (1.41,2.37)-- (1.21,2.14);
\draw (1.21,2.14)-- (1.32,1.85);
\draw (1.32,1.85)-- (1.61,1.8);
\draw (1.61,1.8)-- (1.81,2.03);
\draw (1.81,2.03)-- (1.71,2.32);
\draw (1.71,2.32)-- (1.41,2.37);
\draw [color=red] (-2.48,2.98)-- (2.46,-1.99);
\draw [shift={(0.84,6.8)}] plot[domain=4:4.87,variable=\t]({1*5.06*cos(\t r)+0*5.06*sin(\t r)},{0*5.06*cos(\t r)+1*5.06*sin(\t r)});
\draw [shift={(-0.51,-4.98)}] plot[domain=0.79:1.77,variable=\t]({1*4.21*cos(\t r)+0*4.21*sin(\t r)},{0*4.21*cos(\t r)+1*4.21*sin(\t r)});
\begin{scriptsize}
\draw[color=black] (1.58,4.14) node {$m$};
\end{scriptsize}
\end{tikzpicture}
\caption{}
\label{fig:case3}
\end{minipage}
\end{figure}

In this case, it is more difficult to see morphisms in the Auslander-Reiten quiver of $Q$ because one arc is in the transjective component and the other is in a tube. Nonetheless, if we can find an $(m+2)$-angulation where $\beta$ is the $i$-th twist of $\alpha$, then from lemma \ref{lem:morph}, there is an extension which is nonzero. We have to complete $\alpha$ into an $(m+2)$-angulation containing this arc (see figure \ref{fig:case3}):

As $\beta$ is the $i$-twist of $\alpha$, then there exists $i \in \{1,\cdots,m\}$ such that $\mathrm{Ext}_\mathcal{C}^i(X_\alpha,X_\beta) \neq 0$.

The case where $\alpha$ is in the tube and $\beta$ is of type $1$ is similar.

\vspace{10pt}

\cor{Case $4$: In the case of figure \ref{fig:case4}:}

\begin{figure}[!h]
\centering
\begin{minipage}[t]{7cm}
\centering
\begin{tikzpicture}[scale=0.5]
\fill[dash pattern=on 3pt off 3pt,fill=black,fill opacity=0.1] (0,4) -- (-2.48,2.98) -- (-3.51,0.51) -- (-2.49,-1.97) -- (-0.02,-3.01) -- (2.46,-1.99) -- (3.49,0.49) -- (2.47,2.97) -- cycle;
\fill[fill=black,fill opacity=1.0] (1.41,2.37) -- (1.21,2.14) -- (1.32,1.85) -- (1.61,1.8) -- (1.81,2.03) -- (1.71,2.32) -- cycle;
\fill[fill=black,fill opacity=1.0] (-1.89,-0.91) -- (-1.66,-0.72) -- (-1.37,-0.82) -- (-1.32,-1.12) -- (-1.55,-1.31) -- (-1.84,-1.21) -- cycle;
\draw (0,4)-- (-2.48,2.98);
\draw [dash pattern=on 3pt off 3pt] (-2.48,2.98)-- (-3.51,0.51);
\draw (-3.51,0.51)-- (-2.49,-1.97);
\draw [dash pattern=on 3pt off 3pt] (-2.49,-1.97)-- (-0.02,-3.01);
\draw (-0.02,-3.01)-- (2.46,-1.99);
\draw [dash pattern=on 3pt off 3pt] (2.46,-1.99)-- (3.49,0.49);
\draw (3.49,0.49)-- (2.47,2.97);
\draw [dash pattern=on 3pt off 3pt] (2.47,2.97)-- (0,4);
\draw (0.21,4.5)-- (2.68,3.46);
\draw (2.68,3.46)-- (2.61,3.62);
\draw (2.68,3.46)-- (2.52,3.4);
\draw (0.21,4.5)-- (0.28,4.34);
\draw (0.21,4.5)-- (0.37,4.56);
\draw (1.41,2.37)-- (1.21,2.14);
\draw (1.21,2.14)-- (1.32,1.85);
\draw (1.32,1.85)-- (1.61,1.8);
\draw (1.61,1.8)-- (1.81,2.03);
\draw (1.81,2.03)-- (1.71,2.32);
\draw (1.71,2.32)-- (1.41,2.37);
\draw (-2.48,2.98)-- (2.46,-1.99);
\draw [shift={(1.83,-0.27)}] plot[domain=1.98:3.52,variable=\t]({1*4.65*cos(\t r)+0*4.65*sin(\t r)},{0*4.65*cos(\t r)+1*4.65*sin(\t r)});
\begin{scriptsize}
\draw[color=black] (1.58,4.14) node {$m$};
\end{scriptsize}
\end{tikzpicture}
\caption{Case $4$, $\alpha$ is of type $1$ and $\beta$ is in the tube of size $m-1$.}
\label{fig:case4}
\end{minipage}
\begin{minipage}[t]{7cm}
\centering
\begin{tikzpicture}[scale=0.5]
\fill[dash pattern=on 3pt off 3pt,fill=black,fill opacity=0.1] (0,4) -- (-2.48,2.98) -- (-3.51,0.51) -- (-2.49,-1.97) -- (-0.02,-3.01) -- (2.46,-1.99) -- (3.49,0.49) -- (2.47,2.97) -- cycle;
\fill[fill=black,fill opacity=1.0] (1.41,2.37) -- (1.21,2.14) -- (1.32,1.85) -- (1.61,1.8) -- (1.81,2.03) -- (1.71,2.32) -- cycle;
\fill[fill=black,fill opacity=1.0] (-1.89,-0.91) -- (-1.66,-0.72) -- (-1.37,-0.82) -- (-1.32,-1.12) -- (-1.55,-1.31) -- (-1.84,-1.21) -- cycle;
\draw (0,4)-- (-2.48,2.98);
\draw [dash pattern=on 3pt off 3pt] (-2.48,2.98)-- (-3.51,0.51);
\draw (-3.51,0.51)-- (-2.49,-1.97);
\draw [dash pattern=on 3pt off 3pt] (-2.49,-1.97)-- (-0.02,-3.01);
\draw (-0.02,-3.01)-- (2.46,-1.99);
\draw [dash pattern=on 3pt off 3pt] (2.46,-1.99)-- (3.49,0.49);
\draw (3.49,0.49)-- (2.47,2.97);
\draw [dash pattern=on 3pt off 3pt] (2.47,2.97)-- (0,4);
\draw (0.21,4.5)-- (2.68,3.46);
\draw (2.68,3.46)-- (2.61,3.62);
\draw (2.68,3.46)-- (2.52,3.4);
\draw (0.21,4.5)-- (0.28,4.34);
\draw (0.21,4.5)-- (0.37,4.56);
\draw (1.41,2.37)-- (1.21,2.14);
\draw (1.21,2.14)-- (1.32,1.85);
\draw (1.32,1.85)-- (1.61,1.8);
\draw (1.61,1.8)-- (1.81,2.03);
\draw (1.81,2.03)-- (1.71,2.32);
\draw (1.71,2.32)-- (1.41,2.37);
\draw [color=red] (-2.48,2.98)-- (2.46,-1.99);
\draw [shift={(7.67,-12.74)}] plot[domain=-0.75:0.39,variable=\t]({-0.61*15.96*cos(\t r)+-0.79*3.66*sin(\t r)},{0.79*15.96*cos(\t r)+-0.61*3.66*sin(\t r)});
\begin{scriptsize}
\draw[color=black] (1.58,4.14) node {$m$};
\end{scriptsize}
\end{tikzpicture}
\caption{}
\label{fig:case4bis}
\end{minipage}
\end{figure}

\cor{Then we use the same argument as in case $3$, if we find an $(m+2)$-angulation where $\beta$ is the flip of $\alpha$, then there exists a morphism between them.}

\cor{In this case, we have to take an $(m+2)$-angulation containing this arc (see figure \ref{fig:case4bis}). Let $a$ and $b$ (respectively $c $and $d$) be the ends of $\alpha$ (respectively $\beta$).}

\cor{If $\beta$ is not minimal, then we cut along an $m$-ear in order to reduce to the case where $\beta$ is minimal. If the arc from $a$ to $c$ is admissible, then we draw it as in figure \ref{fig:case4bis}. Else, let $e$ be the smallest integer such that the arc from $a$ to $c+e$ is admissible. Then we flip $\alpha$ as many times as necessary in order to get $\beta$.}

Then there exists a nonzero extension between $X_\alpha$ and $X_\beta$. The inverse case is similar.

\vspace{10pt}

Case $5$: If both $\alpha$ and $\beta$ are of type $3$ (cf figure \ref{fig:t3}). Let $i$ be the end of $\alpha$ on $P$ and let $j$ be the end of $\beta$ on $P$.

\begin{figure}[!h]
\centering
\begin{tikzpicture}[scale=0.6]
\fill[fill=black,fill opacity=0.1] (0,4) -- (-1.66,3.46) -- (-2.69,2.05) -- (-2.68,0.3) -- (-1.66,-1.11) -- (0,-1.65) -- (1.66,-1.11) -- (2.69,0.3) -- (2.69,2.05) -- (1.66,3.46) -- cycle;
\draw [fill=black,fill opacity=1.0] (-1.66,1.57) circle (0.2cm);
\draw [fill=black,fill opacity=1.0] (1.66,1.58) circle (0.2cm);
\draw (0,4)-- (-1.66,3.46);
\draw (-1.66,3.46)-- (-2.69,2.05);
\draw (-2.69,2.05)-- (-2.68,0.3);
\draw (-2.68,0.3)-- (-1.66,-1.11);
\draw (-1.66,-1.11)-- (0,-1.65);
\draw (0,-1.65)-- (1.66,-1.11);
\draw (1.66,-1.11)-- (2.69,0.3);
\draw (2.69,0.3)-- (2.69,2.05);
\draw (2.69,2.05)-- (1.66,3.46);
\draw (1.66,3.46)-- (0,4);
\draw(-1.66,1.17) circle (0.4cm);
\draw(1.66,1.18) circle (0.4cm);
\draw [shift={(1.91,-1.86)}] plot[domain=1.64:2.93,variable=\t]({1*3.65*cos(\t r)+0*3.65*sin(\t r)},{0*3.65*cos(\t r)+1*3.65*sin(\t r)});
\draw [shift={(-1.95,-1.92)}] plot[domain=0.22:1.49,variable=\t]({1*3.7*cos(\t r)+0*3.7*sin(\t r)},{0*3.7*cos(\t r)+1*3.7*sin(\t r)});
\end{tikzpicture}
\caption{Case where both $\alpha$ and $\beta$ are of other type}
\label{fig:t3}
\end{figure}


\cor{Let $k=|j-i|$. We can move $\beta$ $k<m$ times in order to have that the end of $\beta[k]$ hung to $P$ is $i$. Then the composition of elementary moves in figure \ref{fig:type5} is not zero since it follows a slice of the Auslander-Reiten quiver (so do not cross a mesh).}

Then there is a nonzero extension between $\alpha$ and $\beta$.

\begin{figure}[!h]
\centering
\begin{tikzpicture}[scale=0.5]
\fill[dash pattern=on 3pt off 3pt,fill=black,fill opacity=0.1] (0,4) -- (-2.48,2.98) -- (-3.51,0.51) -- (-2.49,-1.97) -- (-0.02,-3.01) -- (2.46,-1.99) -- (3.49,0.49) -- (2.47,2.97) -- cycle;
\fill[fill=black,fill opacity=1.0] (1.41,2.37) -- (1.21,2.14) -- (1.32,1.85) -- (1.61,1.8) -- (1.81,2.03) -- (1.71,2.32) -- cycle;
\fill[fill=black,fill opacity=1.0] (-1.89,-0.91) -- (-1.66,-0.72) -- (-1.37,-0.82) -- (-1.32,-1.12) -- (-1.55,-1.31) -- (-1.84,-1.21) -- cycle;
\draw (0,4)-- (-2.48,2.98);
\draw [dash pattern=on 3pt off 3pt] (-2.48,2.98)-- (-3.51,0.51);
\draw (-3.51,0.51)-- (-2.49,-1.97);
\draw [dash pattern=on 3pt off 3pt] (-2.49,-1.97)-- (-0.02,-3.01);
\draw (-0.02,-3.01)-- (2.46,-1.99);
\draw [dash pattern=on 3pt off 3pt] (2.46,-1.99)-- (3.49,0.49);
\draw (3.49,0.49)-- (2.47,2.97);
\draw [dash pattern=on 3pt off 3pt] (2.47,2.97)-- (0,4);
\draw (0.21,4.5)-- (2.68,3.46);
\draw (2.68,3.46)-- (2.61,3.62);
\draw (2.68,3.46)-- (2.52,3.4);
\draw (0.21,4.5)-- (0.28,4.34);
\draw (0.21,4.5)-- (0.37,4.56);
\draw (1.41,2.37)-- (1.21,2.14);
\draw (1.21,2.14)-- (1.32,1.85);
\draw (1.32,1.85)-- (1.61,1.8);
\draw (1.61,1.8)-- (1.81,2.03);
\draw (1.81,2.03)-- (1.71,2.32);
\draw (1.71,2.32)-- (1.41,2.37);
\draw [shift={(10.32,2.65)}] plot[domain=3.2:3.67,variable=\t]({1*9.12*cos(\t r)+0*9.12*sin(\t r)},{0*9.12*cos(\t r)+1*9.12*sin(\t r)});
\draw [shift={(-1.05,-6.59)}] plot[domain=0.92:1.63,variable=\t]({1*5.79*cos(\t r)+0*5.79*sin(\t r)},{0*5.79*cos(\t r)+1*5.79*sin(\t r)});
\draw (2.46,-1.99)-- (-2.48,2.98);
\begin{scriptsize}
\draw[color=black] (1.57,4.14) node {$m$};
\end{scriptsize}
\end{tikzpicture}
\caption{}
\label{fig:type5}
\end{figure}

\vspace{10pt}

Case $6$: If we are in the situation of figure \ref{fig:case6}: It means that $\alpha$ is of type $3$ and $\beta$ is of type $4$.

\begin{figure}[!h]
\centering
\begin{minipage}[t]{7cm}
\centering
\begin{tikzpicture}[scale=0.5]
\fill[dash pattern=on 3pt off 3pt,fill=black,fill opacity=0.1] (0,4) -- (-2.48,2.98) -- (-3.51,0.51) -- (-2.49,-1.97) -- (-0.02,-3.01) -- (2.46,-1.99) -- (3.49,0.49) -- (2.47,2.97) -- cycle;
\fill[fill=black,fill opacity=1.0] (1.41,2.37) -- (1.21,2.14) -- (1.32,1.85) -- (1.61,1.8) -- (1.81,2.03) -- (1.71,2.32) -- cycle;
\fill[fill=black,fill opacity=1.0] (-1.89,-0.91) -- (-1.66,-0.72) -- (-1.37,-0.82) -- (-1.32,-1.12) -- (-1.55,-1.31) -- (-1.84,-1.21) -- cycle;
\draw (0,4)-- (-2.48,2.98);
\draw [dash pattern=on 3pt off 3pt] (-2.48,2.98)-- (-3.51,0.51);
\draw (-3.51,0.51)-- (-2.49,-1.97);
\draw [dash pattern=on 3pt off 3pt] (-2.49,-1.97)-- (-0.02,-3.01);
\draw (-0.02,-3.01)-- (2.46,-1.99);
\draw [dash pattern=on 3pt off 3pt] (2.46,-1.99)-- (3.49,0.49);
\draw (3.49,0.49)-- (2.47,2.97);
\draw [dash pattern=on 3pt off 3pt] (2.47,2.97)-- (0,4);
\draw (0.21,4.5)-- (2.68,3.46);
\draw (2.68,3.46)-- (2.61,3.62);
\draw (2.68,3.46)-- (2.52,3.4);
\draw (0.21,4.5)-- (0.28,4.34);
\draw (0.21,4.5)-- (0.37,4.56);
\draw (1.41,2.37)-- (1.21,2.14);
\draw (1.21,2.14)-- (1.32,1.85);
\draw (1.32,1.85)-- (1.61,1.8);
\draw (1.61,1.8)-- (1.81,2.03);
\draw (1.81,2.03)-- (1.71,2.32);
\draw (1.71,2.32)-- (1.41,2.37);
\draw [shift={(-2.07,5.59)}] plot[domain=4.44:5.48,variable=\t]({1*5.28*cos(\t r)+0*5.28*sin(\t r)},{0*5.28*cos(\t r)+1*5.28*sin(\t r)});
\draw (1.32,1.85)-- (-1.35,-0.83);
\begin{scriptsize}
\draw[color=black] (1.58,4.14) node {$m$};
\end{scriptsize}
\end{tikzpicture}
\caption{Case where $\alpha$ is of type $3$ and $X_\beta$ is in a tube of size $2$}
\label{fig:case6}
\end{minipage}
\begin{minipage}[t]{7cm}
\centering
\begin{tikzpicture}[scale=0.5]
\fill[dash pattern=on 3pt off 3pt,fill=black,fill opacity=0.1] (0,4) -- (-2.48,2.98) -- (-3.51,0.51) -- (-2.49,-1.97) -- (-0.02,-3.01) -- (2.46,-1.99) -- (3.49,0.49) -- (2.47,2.97) -- cycle;
\fill[fill=black,fill opacity=1.0] (1.41,2.37) -- (1.21,2.14) -- (1.32,1.85) -- (1.61,1.8) -- (1.81,2.03) -- (1.71,2.32) -- cycle;
\fill[fill=black,fill opacity=1.0] (-1.89,-0.91) -- (-1.66,-0.72) -- (-1.37,-0.82) -- (-1.32,-1.12) -- (-1.55,-1.31) -- (-1.84,-1.21) -- cycle;
\draw (0,4)-- (-2.48,2.98);
\draw [dash pattern=on 3pt off 3pt] (-2.48,2.98)-- (-3.51,0.51);
\draw (-3.51,0.51)-- (-2.49,-1.97);
\draw [dash pattern=on 3pt off 3pt] (-2.49,-1.97)-- (-0.02,-3.01);
\draw (-0.02,-3.01)-- (2.46,-1.99);
\draw [dash pattern=on 3pt off 3pt] (2.46,-1.99)-- (3.49,0.49);
\draw (3.49,0.49)-- (2.47,2.97);
\draw [dash pattern=on 3pt off 3pt] (2.47,2.97)-- (0,4);
\draw (0.21,4.5)-- (2.68,3.46);
\draw (2.68,3.46)-- (2.61,3.62);
\draw (2.68,3.46)-- (2.52,3.4);
\draw (0.21,4.5)-- (0.28,4.34);
\draw (0.21,4.5)-- (0.37,4.56);
\draw (1.41,2.37)-- (1.21,2.14);
\draw (1.21,2.14)-- (1.32,1.85);
\draw (1.32,1.85)-- (1.61,1.8);
\draw (1.61,1.8)-- (1.81,2.03);
\draw (1.81,2.03)-- (1.71,2.32);
\draw (1.71,2.32)-- (1.41,2.37);
\draw [shift={(-2.07,5.59)},color=red]  plot[domain=4.44:5.48,variable=\t]({1*5.28*cos(\t r)+0*5.28*sin(\t r)},{0*5.28*cos(\t r)+1*5.28*sin(\t r)});
\draw [shift={(-5.25,-4.64)}] plot[domain=0.77:1.25,variable=\t]({1*5.43*cos(\t r)+0*5.43*sin(\t r)},{0*5.43*cos(\t r)+1*5.43*sin(\t r)});
\begin{scriptsize}
\draw[color=black] (1.58,4.14) node {$m$};
\end{scriptsize}
\end{tikzpicture}
\caption{}
\label{fig:case6bis}
\end{minipage}
\end{figure}

The same arguments as in case $3$ lead to find an $(m+2)$-angulation containing these arcs in figure \ref{fig:case6bis}.

Here again, there exists a nonzero extension between $X_\alpha$ and $X_\beta$. The inverse case is similar.

\vspace{10pt}

\cor{Case $7$: If we are in the situation of figure \ref{fig:case7}: It means that $\alpha$ is of type $3$ and $\beta$ is of type $2$.}

\begin{figure}[!ht]
\centering
\begin{minipage}[t]{7cm}
\centering
\begin{tikzpicture}[scale=0.5]
\fill[dash pattern=on 3pt off 3pt,fill=black,fill opacity=0.1] (0,4) -- (-2.48,2.98) -- (-3.51,0.51) -- (-2.49,-1.97) -- (-0.02,-3.01) -- (2.46,-1.99) -- (3.49,0.49) -- (2.47,2.97) -- cycle;
\fill[fill=black,fill opacity=1.0] (1.41,2.37) -- (1.21,2.14) -- (1.32,1.85) -- (1.61,1.8) -- (1.81,2.03) -- (1.71,2.32) -- cycle;
\fill[fill=black,fill opacity=1.0] (-1.89,-0.91) -- (-1.66,-0.72) -- (-1.37,-0.82) -- (-1.32,-1.12) -- (-1.55,-1.31) -- (-1.84,-1.21) -- cycle;
\draw (0,4)-- (-2.48,2.98);
\draw [dash pattern=on 3pt off 3pt] (-2.48,2.98)-- (-3.51,0.51);
\draw (-3.51,0.51)-- (-2.49,-1.97);
\draw [dash pattern=on 3pt off 3pt] (-2.49,-1.97)-- (-0.02,-3.01);
\draw (-0.02,-3.01)-- (2.46,-1.99);
\draw [dash pattern=on 3pt off 3pt] (2.46,-1.99)-- (3.49,0.49);
\draw (3.49,0.49)-- (2.47,2.97);
\draw [dash pattern=on 3pt off 3pt] (2.47,2.97)-- (0,4);
\draw (0.21,4.5)-- (2.68,3.46);
\draw (2.68,3.46)-- (2.61,3.62);
\draw (2.68,3.46)-- (2.52,3.4);
\draw (0.21,4.5)-- (0.28,4.34);
\draw (0.21,4.5)-- (0.37,4.56);
\draw (1.41,2.37)-- (1.21,2.14);
\draw (1.21,2.14)-- (1.32,1.85);
\draw (1.32,1.85)-- (1.61,1.8);
\draw (1.61,1.8)-- (1.81,2.03);
\draw (1.81,2.03)-- (1.71,2.32);
\draw (1.71,2.32)-- (1.41,2.37);
\draw [shift={(-4.02,1.29)}] plot[domain=-0.82:0.13,variable=\t]({1*5.88*cos(\t r)+0*5.88*sin(\t r)},{0*5.88*cos(\t r)+1*5.88*sin(\t r)});
\draw [shift={(0.64,2.04)}] plot[domain=3.5:5.14,variable=\t]({1*4.42*cos(\t r)+0*4.42*sin(\t r)},{0*4.42*cos(\t r)+1*4.42*sin(\t r)});
\begin{scriptsize}
\draw[color=black] (1.58,4.14) node {$m$};
\end{scriptsize}
\end{tikzpicture}
\caption{Case where $\alpha$ is of type $3$ and $X_\beta$ is in a tube of size $m-1$}
\label{fig:case7}
\end{minipage}
\begin{minipage}[t]{7cm}
\centering
\begin{tikzpicture}[scale=0.5]
\fill[dash pattern=on 3pt off 3pt,fill=black,fill opacity=0.1] (0,4) -- (-2.48,2.98) -- (-3.51,0.51) -- (-2.49,-1.97) -- (-0.02,-3.01) -- (2.46,-1.99) -- (3.49,0.49) -- (2.47,2.97) -- cycle;
\fill[fill=black,fill opacity=1.0] (1.41,2.37) -- (1.21,2.14) -- (1.32,1.85) -- (1.61,1.8) -- (1.81,2.03) -- (1.71,2.32) -- cycle;
\fill[fill=black,fill opacity=1.0] (-1.89,-0.91) -- (-1.66,-0.72) -- (-1.37,-0.82) -- (-1.32,-1.12) -- (-1.55,-1.31) -- (-1.84,-1.21) -- cycle;
\draw (0,4)-- (-2.48,2.98);
\draw [dash pattern=on 3pt off 3pt] (-2.48,2.98)-- (-3.51,0.51);
\draw (-3.51,0.51)-- (-2.49,-1.97);
\draw [dash pattern=on 3pt off 3pt] (-2.49,-1.97)-- (-0.02,-3.01);
\draw (-0.02,-3.01)-- (2.46,-1.99);
\draw [dash pattern=on 3pt off 3pt] (2.46,-1.99)-- (3.49,0.49);
\draw (3.49,0.49)-- (2.47,2.97);
\draw [dash pattern=on 3pt off 3pt] (2.47,2.97)-- (0,4);
\draw (0.21,4.5)-- (2.68,3.46);
\draw (2.68,3.46)-- (2.61,3.62);
\draw (2.68,3.46)-- (2.52,3.4);
\draw (0.21,4.5)-- (0.28,4.34);
\draw (0.21,4.5)-- (0.37,4.56);
\draw (1.41,2.37)-- (1.21,2.14);
\draw (1.21,2.14)-- (1.32,1.85);
\draw (1.32,1.85)-- (1.61,1.8);
\draw (1.61,1.8)-- (1.81,2.03);
\draw (1.81,2.03)-- (1.71,2.32);
\draw (1.71,2.32)-- (1.41,2.37);
\draw [shift={(-4.02,1.29)},color=red]  plot[domain=-0.82:0.13,variable=\t]({1*5.88*cos(\t r)+0*5.88*sin(\t r)},{0*5.88*cos(\t r)+1*5.88*sin(\t r)});
\draw [shift={(-2.88,-0.79)}] plot[domain=-0.22:0.54,variable=\t]({1*5.47*cos(\t r)+0*5.47*sin(\t r)},{0*5.47*cos(\t r)+1*5.47*sin(\t r)});
\draw [shift={(2.92,3.41)}] plot[domain=3.57:4.28,variable=\t]({1*7.05*cos(\t r)+0*7.05*sin(\t r)},{0*7.05*cos(\t r)+1*7.05*sin(\t r)});
\begin{scriptsize}
\draw[color=black] (1.58,4.14) node {$m$};
\end{scriptsize}
\end{tikzpicture}
\caption{}
\label{fig:case7bis}
\end{minipage}
\end{figure}

\cor{The same arguments as case $5$ lead to find an $(m+2)$-angulation containing these arcs in figure \ref{fig:case7bis}.}

\cor{Here again, there exists a morphism between $\alpha$ and $\beta[k]$ for some $k \in \{1,\cdots,m\}$. The inverse case is similar.}

\vspace{10pt}

Case $8$: If both $m$-diagonals are of type $4$, it means that we focus on a tube of size $2$ (cf figure \ref{fig:t5}).

\begin{figure}[!h]
\centering
\begin{tikzpicture}[scale=0.2]
\fill[fill=black,fill opacity=0.15] (0,4) -- (0,-4) -- (6.93,-8) -- (13.86,-4) -- (13.86,4) -- (6.93,8) -- cycle;
\draw [fill=black,fill opacity=1.0] (3.46,2) circle (0.4cm);
\draw [fill=black,fill opacity=1.0] (3.46,-2) circle (0.4cm);
\draw [fill=black,fill opacity=1.0] (10.39,2) circle (0.4cm);
\draw [fill=black,fill opacity=1.0] (10.39,-2) circle (0.4cm);
\draw (0,4)-- (0,-4);
\draw (0,-4)-- (6.93,-8);
\draw (6.93,-8)-- (13.86,-4);
\draw (13.86,-4)-- (13.86,4);
\draw (13.86,4)-- (6.93,8);
\draw (6.93,8)-- (0,4);
\draw [shift={(1.86,0)}] plot[domain=-0.89:0.89,variable=\t]({1*2.57*cos(\t r)+0*2.57*sin(\t r)},{0*2.57*cos(\t r)+1*2.57*sin(\t r)});
\draw [shift={(5.07,0)}] plot[domain=2.25:4.04,variable=\t]({1*2.57*cos(\t r)+0*2.57*sin(\t r)},{0*2.57*cos(\t r)+1*2.57*sin(\t r)});
\draw [shift={(12,0)}] plot[domain=2.25:4.04,variable=\t]({1*2.57*cos(\t r)+0*2.57*sin(\t r)},{0*2.57*cos(\t r)+1*2.57*sin(\t r)});
\draw [shift={(8.79,0)}] plot[domain=-0.89:0.89,variable=\t]({1*2.57*cos(\t r)+0*2.57*sin(\t r)},{0*2.57*cos(\t r)+1*2.57*sin(\t r)});
\draw [shift={(-0.01,-9.55)}] plot[domain=0.62:1.28,variable=\t]({1*12.45*cos(\t r)+0*12.45*sin(\t r)},{0*12.45*cos(\t r)+1*12.45*sin(\t r)});
\draw [shift={(13.17,-8.59)}] plot[domain=1.82:2.55,variable=\t]({1*11.33*cos(\t r)+0*11.33*sin(\t r)},{0*11.33*cos(\t r)+1*11.33*sin(\t r)});
\end{tikzpicture}
\caption{Case of a tube of size $2$, for example for $m=3$}
\label{fig:t5}
\end{figure}

\cor{If $\alpha$ and $\beta$ cross each other, it means that they are in the same tube. Then one is situated higher than the other and there exists a Hom-hammock between them, without crossing a mesh.}

\vspace{10pt}

Case $9$: If both $m$-diagonals are of type $2$, it means we are in the situation of figure \ref{fig:t4} in the tube:

\begin{figure}[!h]
\centering
\begin{tikzpicture}[scale=0.6]
\fill[fill=black,fill opacity=0.1] (0,4) -- (-1.66,3.46) -- (-2.69,2.05) -- (-2.68,0.3) -- (-1.66,-1.11) -- (0,-1.65) -- (1.66,-1.11) -- (2.69,0.3) -- (2.69,2.05) -- (1.66,3.46) -- cycle;
\draw [fill=black,fill opacity=1.0] (-1.66,1.57) circle (0.2cm);
\draw [fill=black,fill opacity=1.0] (1.66,1.58) circle (0.2cm);
\draw (0,4)-- (-1.66,3.46);
\draw (-1.66,3.46)-- (-2.69,2.05);
\draw (-2.69,2.05)-- (-2.68,0.3);
\draw (-2.68,0.3)-- (-1.66,-1.11);
\draw (-1.66,-1.11)-- (0,-1.65);
\draw (0,-1.65)-- (1.66,-1.11);
\draw (1.66,-1.11)-- (2.69,0.3);
\draw (2.69,0.3)-- (2.69,2.05);
\draw (2.69,2.05)-- (1.66,3.46);
\draw (1.66,3.46)-- (0,4);
\draw(-1.66,1.17) circle (0.4cm);
\draw(1.66,1.18) circle (0.4cm);
\draw [shift={(-1.61,1.17)}] plot[domain=-0.61:0.61,variable=\t]({1*3.99*cos(\t r)+0*3.99*sin(\t r)},{0*3.99*cos(\t r)+1*3.99*sin(\t r)});
\draw [shift={(0.29,0.28)}] plot[domain=0.64:3.13,variable=\t]({1*2.98*cos(\t r)+0*2.98*sin(\t r)},{0*2.98*cos(\t r)+1*2.98*sin(\t r)});
\end{tikzpicture}
\caption{Case of the "$m$-ears"}
\label{fig:t4}
\end{figure}

\cor{If one of the arc is an $m$-ear, we only need to move the other to conclude. If not, we use Lemma \ref{lem:succ} to cut along an new $m$-ear $\gamma$ which does not cross any of the arcs.}

\vspace{10pt}

In any case, we have shown that if $\alpha$ crosses $\beta$, then there exists $k$ such that \[{\mathrm{Ext}}_\mathcal{C}^k(X_\alpha,X_\beta) \neq 0.\]
\end{proof}

\section{Compatibility with the flip and bijection between $m$-cluster-tilting objects and $(m+2)$-angulations}

With theorem \ref{th:cross}, we are able to define an $(m+2)$-angulation from an $m$-cluster-tilting object.

\cor{Let $T=\bigoplus_{i=1}^{n+1} T_i$ be an $m$-cluster-tilting object, and $T_i$ its $m$-rigid indecomposable summands. From Theorem \ref{th:ra}, for each $i \in \{1,\cdots,n\}$, we can associate with $T_i$, the $m$-diagonal $\alpha_i$. From Theorem \ref{th:cross}, we know that the $\alpha_i$ do not cross each other. Then the set $\{\alpha_i, i \in \{1,\cdots,n+1\}\}$ form a maximal set of noncrossing $m$-diagonals, which is an $(m+2)$-angulation.}

\begin{defi}\label{def:mcto}
\cor{We define this $(m+2)$-angulation as the $(m+2)$-angulation $\Delta_T$, associated with the $m$-cluster-tilting object $T$.}
\end{defi}

We first show the theorem of compatibility between the flip of an $(m+2)$-angulation, and the mutation of an $m$-cluster-tilting object.

\cor{Let \[\Delta=\{\alpha_i, i \in \{1,\cdots,n+1\}\}\] be an $(m+2)$-angulation. For any $i \in \{1,\cdots,n+1\}$, let $X_i$ be the $m$-rigid indecomposable object associated with $\alpha_i$. Let $X=\bigoplus_{i=1}^{n+1}X_i$ be its associated object. This object is $m$-cluster-tilting since it is the sum of $n+1$ $m$-rigid indecomposable objects.}

\begin{theo}\label{th:comp}
Let $\Delta$ be an $(m+2)$-angulation, and let $X$ be its associated object as we introduce just before.

\cor{Let $i \in \,\cdots,n+1\}$. Let $\mu_i$ be the flip of $\Delta$ at the $m$-diagonal $\alpha_i$. Let $\tilde{\mu}_i$ be the mutation of the $m$-cluster-tilting object $X$ at summand $X_i$. Then we have: \[ \mu_i(\Delta) = \Delta_{\mu_i(X)} \]}
\end{theo}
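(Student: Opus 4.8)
The plan is to reduce the theorem to a single identity between arcs and then prove that identity by an Iyama--Yoshino reduction, reusing the machinery of Section~3. Write $\Delta = \{\alpha_1,\dots,\alpha_n\}$ and let $X = \bigoplus_j X_{\alpha_j}$ be the associated $m$-cluster-tilting object. The flip $\mu_i(\Delta)$ keeps every arc $\alpha_j$ with $j\neq i$ and replaces $\alpha_i$ by its twist $\kappa_\Delta(\alpha_i)$; the mutation $\mu_i(X)$ keeps every summand $X_{\alpha_j}$ with $j\neq i$ and replaces $X_{\alpha_i}=T_i^{(0)}$ by the object $T_i^{(1)}$ appearing in the exchange triangle $T_i^{(0)}\xrightarrow{f_i^{(0)}} B_i^{(0)}\to T_i^{(1)}\to T_i^{(0)}[1]$. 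Consequently $\Delta_{\mu_i(X)} = \{\alpha_j : j\neq i\}\cup\{\beta\}$, where $\beta$ is the $m$-diagonal attached to $T_i^{(1)}$ by the fixed bijection, and the whole theorem is equivalent to the assertion
\[ X_{\kappa_\Delta(\alpha_i)}\;\simeq\;T_i^{(1)} . \]

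To prove this I would pass to the Iyama--Yoshino reduction $\mathcal C' = \mathcal U/(\overline T)$ with $\overline T = \bigoplus_{j\neq i} X_{\alpha_j}$, where $\mathcal U = \{Y : \mathrm{Ext}^l_{\mathcal C}(\overline T,Y)=0,\ l\in\{1,\dots,m\}\}$. By the cutting lemmas of Section~3 (cutting along an $m$-ear, the successive-cutting lemma, and their transjective analogues, exactly as carried out in the proof of Lemma~\ref{lem:morph}), cutting $P$ along all the arcs $\alpha_j$, $j\neq i$, realises $\mathcal C'$ geometrically, compatibly with the arc--object bijection, in the small piece of $P$ containing $\alpha_i$ (the union of the two $(m+2)$-angles of $\Delta$ incident to $\alpha_i$). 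Now two observations finish the argument. First, since $f_i^{(0)}$ is a minimal left $\mathrm{add}\,\overline T$-approximation, the exchange triangle is, by definition of the triangulated structure on an Iyama--Yoshino reduction, a defining triangle for the suspension in $\mathcal C'$; hence $T_i^{(1)}\simeq X_{\alpha_i}\langle 1\rangle$. Second, by the mechanism in the proof of Lemma~\ref{lem:morph}, cutting turns the elementary twist $\kappa_\Delta$ into the suspension $\langle 1\rangle$ of $\mathcal C'$, so $X_{\kappa_\Delta(\alpha_i)}\simeq X_{\alpha_i}\langle 1\rangle$ (for $m=1$ this last point is the classical compatibility of flips with mutation, or a one-line variant of the same reduction). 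Comparing the two displays gives $X_{\kappa_\Delta(\alpha_i)}\simeq T_i^{(1)}$, hence $\mu_i(\Delta)=\Delta_{\mu_i(X)}$.

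As a cross-check, and as a route that avoids describing the reduced category, one can argue through coloured quivers: Theorem~\ref{th:mut} gives $Q_{\mu_i(X)}=\mu_i(Q_X)$ and Theorem~\ref{theo:corresp} gives $Q_{\mu_i(\Delta)}=\mu_i(Q_\Delta)$, so once one knows that the coloured quiver attached to $\Delta$ coincides with the one attached to $X=X_\Delta$ (which follows from the construction of the bijection in each type together with the ``share an oriented angle'' description of the colours), the two sides have equal coloured quivers; since $\mu_i(\Delta)$ and $\Delta_{\mu_i(X)}$ differ from $\Delta$ only in the arc at position $i$, and the $m+1$ arcs completing $\Delta\setminus\{\alpha_i\}$ are pairwise distinguished by the colours of the arrows at vertex $i$, equal coloured quivers force the two completing arcs to agree.

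The main obstacle is the first of the two observations' geometric input: the identification of the iterated Iyama--Yoshino reduction with iterated cutting in a way that is simultaneously compatible with the arc--object bijection \emph{and} with the suspension functors. This has to be verified uniformly in types $A$, $D$, $\tilde A$ and $\tilde D$; it rests on the (type-by-type) reduction lemmas of Section~3 and on the Calabi--Yau reductions being triangle equivalences of the form predicted by Theorem~\ref{th:kr}, and it requires keeping careful track of orientation conventions (clockwise twist versus suspension, the analogue of Remark~\ref{rmk:shift}) and of the low-rank base cases already isolated in Section~3, notably $\tilde D_4$. Once that compatibility is in place, the remaining steps are the short homological facts recorded above.
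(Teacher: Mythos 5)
Your main argument is essentially the paper's own proof: you reduce the statement to the identity $T_i^{(1)}\simeq X_{\kappa_\Delta(\alpha_i)}$, pass to the Iyama--Yoshino reduction at $\overline{T}=\bigoplus_{j\neq i}X_{\alpha_j}$, use the cutting results of Section~3 together with Theorem~\ref{th:kr} to realize this reduction geometrically in the $(2m+2)$-gon around $\alpha_i$ (the paper identifies it as $\mathcal{C}^m_{A_1}$), and then observe that the exchange triangle computes the suspension $\langle 1\rangle$ there while the twist realizes that suspension geometrically, exactly as in the paper's comparison of the two triangles. The only caveat is your coloured-quiver ``cross-check'': in the paper's logical order the equality $Q_\Delta=Q_{X_\Delta}$ is deduced \emph{from} Theorem~\ref{th:comp}, so that route would be circular unless proved independently, but since you offer it only as a secondary check the main proof stands.
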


\begin{rmk}
In fact, the $(m+2)$-angulation $\Delta$ and $\mu_i(\Delta)$ (respectively $X$ and $\mu_i(X)$) are identical in all their components, except the $i^{th}$ component.
\end{rmk}

\begin{proof}
By Buan and Thomas in \cite{BT}, we know that there is a triangle \[X_i \to B_i^{(0)} \to X_i^{(1)} \to,\] where $B_i^{(0)} \in {\mathrm{add}}T$. The aim of the proof is to show that $X_i^{(1)} \simeq X_{\tilde{i}}$, where $X_{\tilde{i}}$ is the $m$-rigid corresponding to the arc $\overline{\alpha_i}=\kappa_i(\alpha)$ which is the twist of $\alpha_i$.

Let $\overline{X}=X/X_i$ be an almost $m$-cluster-tilting object. Then from Wraalsen (\cite{W}) and Zhou, Zhu (\cite{ZZ}), $\overline{X}$ has $m+1$ complements.

\cor{Let \[\overline{\Delta}=\{\alpha_j, j \in \{1,\cdots,n+1\}\setminus \{i\}\}\] be the "almost" $(m+2)$-angulation, containing all arcs of $\Delta$ except $\alpha_i$. Note that for each $j \in \{1,\cdots,n+1\}\setminus \{i\}\}$, the $m$-diagonal $\alpha_j$ corresponds to $X_j$. We can say that $\overline{\Delta}$ corresponds to $\overline{X}$.}

Let \[{\mathcal{U}}=\{ Y \in {\mathcal{C}}^{m}_Q, \forall k \in \{1,\cdots,m\}, {\mathrm{Ext}}_\mathcal{C}^k(\overline{X},Y)=0 \}.\]

\cor{Let $\mathcal{C}'=\mathcal{U}/(\overline{X})$ be the Iyama-Yoshino reduction of the higher cluster category $\mathcal{C}_Q^m$ at $\overline{X}$. We define the twist in this category $\langle 1 \rangle$.}

\cor{Then by Theorem \ref{th:cross}, an object in ${\mathcal{U}}$ corresponds to an arc which does not cross $\overline{\Delta}$. There are $m+1$ possibilities of remaining arcs in order to have an $(m+2)$-angulation. Indeed, removing one arc od $\Delta$ is the first step to the flip process.}

Then by Keller, and Iyama and Yoshino in \cite{Kel03} and \cite{IY}, ${\mathcal{C}}'$ is a triangulated, hom-finite, algebraic and $(m+1)$-Calabi-Yau category. Moreover, each arc which does not cross $\overline{\Delta}$ corresponds to an $m$-cluster-tilting object in ${\mathcal{C}}'$. In addition, for any $i\in \{1,\cdots,n+1\}$, if $\tilde{X}_i$ is the object corresponding to $\alpha_i$ in the category $\mathcal{C}'$, then we have: \[{\mathrm{Ext}}_\mathcal{C'}^k(\tilde{X}_i,\tilde{X}_i)={\mathrm{Ext}}_\mathcal{C'}^{-k}(\tilde{X}_i,\tilde{X}_i)=0\] for all $k \in \{1,\cdots,m \}$. Indeed, the $m$-diagonal $\alpha_i$ does not cross itself.

The algebra ${\mathrm{End}}(\alpha_i)=K$ is hereditary since it is of global dimension $0$. Then, by \cite[Theorem $4.2$]{KR}, we have an equivalence \[ {\mathcal{C}}' \simeq {\mathcal{C}}^{m}_{A_1}. \]
\cor{Therefore we have a distinguished triangle \[ \tilde{X}_i \to E_i \to \tilde{X}_i\langle 1 \rangle \to \tilde{X}_i[1], \] where $E_i$ is the set of arcs which follow $\alpha_i$ in the sense of its quiver.}

\cor{Note that $\langle 1 \rangle$ is the shift in the category ${\mathcal{C}}^{m}_{A_1}$, which means the shift in the remaining $(2m+2)$-gon.
Then it follows that \[\tilde{X}_i\langle 1 \rangle =X_{\overline{\alpha_i}}.\] Then we have two distinguished triangles:}
\cor{\[
\xymatrix{
X_i \ar[r] \ar[d]_*[@]{\sim} & B_i^{(0)} \ar[r] \ar[d]_*[@]{\sim} & X_i^{(1)} \ar[r] & X_i[1] \ar[d]_*[@]{\sim} \\
\tilde{X}_i \ar[r] & E_i \ar[r] & X_{\overline{\alpha_i}} \ar[r] & \tilde{X}_i[1]} \]}

By TR3, the third axiom of triangulated categories, we have a morphism $X_i^{(1)} \to X_{\overline{\alpha_i}}$

\[
\xymatrix{
X_i \ar[r] \ar[d]_*[@]{\sim} & B_i^{(0)} \ar[r] \ar[d]_*[@]{\sim} & X_i^{(1)} \ar[r] \ar[d] & \Sigma X_i \ar[d]_*[@]{\sim} \\
\tilde{X}_i \ar[r] & E_i \ar[r] & X_{\overline{\alpha_i}} \ar[r] & \tilde{X}_i[1]} \]

By the five lemma applied to triangulated categories, we have an isomorphism

\[
\xymatrix{
X_i \ar[r] \ar[d]_*[@]{\sim} & B_i^{(0)} \ar[r] \ar[d]_*[@]{\sim} & X_i^{(1)} \ar[r] \ar[d]_*[@]{\sim} & \Sigma X_i \ar[d]_*[@]{\sim} \\
\tilde{X}_i \ar[r] & E_i \ar[r] & X_{\overline{\alpha_i}} \ar[r] & \tilde{X}_i[1]} \]

Then we have shown that \[ X_i^{(1)} \simeq X_{\tilde{i}}. \]
\end{proof}

\begin{lem}\label{lem:arceq}
\cor{Let $Q$ be a quiver of type $A$, $D$, $\tilde{A}$, or $\tilde{D}$. We consider the higher cluster category $\mathcal{C}_Q^m$. Let $P$ be a polygon, realizing the higher cluster category. Let $\alpha$ and $\beta$ be two $m$-diagonals in $P$. Let $X_\alpha$ (respectively $X_\beta$) be the $m$-rigid indecomposable object associated with $\alpha$ (respectively with $\beta$). Then we have the following equivalence:}
\[ \forall i \in \{1,\cdots,m \}, {\mathrm{Ext}}_\mathcal{C}^i(X_\alpha,X_\beta)=0 \Leftrightarrow \alpha \text{ and } \beta \text{ do not cross}. \]
\end{lem}

\begin{proof}
The direct implication is exactly Theorem \ref{th:cross}.

\cor{Let us now suppose that $\alpha$ and $\beta$ do not cross each other. We complete this set of two $m$-diagonals into an $(m+2)$-angulation $\Delta$ containing the $m$-diagonals $\alpha_1,\cdots,\alpha_{n-1}$. For each $i \in \{1,\cdots,n-1\}$ let $X_i$ be the $m$-rigid indecomposable object associated with $\alpha_i$.}

\cor{Let $X$ be the object defined by \[X=X_\alpha \oplus X_\beta \bigoplus_{i=1}^{n-1} X_i.\] This object os the sum of $n+1$ $m$-rigid indecomposable objects, therefore, it is $m$-clustet-tilting. Then its summands are without self-extension, so:}

\[\forall i \in \{1,\cdots,m \}, {\mathrm{Ext}}_\mathcal{C}^i(X_\alpha,X_\beta)=0.\]

%
%
%
\end{proof}

Finally, we show that there exists a bijection between $(m+2)$-angulations and $m$-cluster-tilting objects.

\begin{theo}\label{th:bij}
\cor{Let $Q$ be a quiver of type $A$, $D$, $\tilde{A}$, or $\tilde{D}$. We consider the higher cluster category $\mathcal{C}_Q^m$. Let $P$ be a polygon, realizing the higher cluster category. Let $\mathcal{P}(P)$ be the set of all existing $(m+2)$-angulations of $P$. Consider the following application:
\[\begin{array}{cccc}
\Phi : & \mathcal{P}(P) & \to &\{\text{isoclasses of } m-\text{cluster-tilting objects}\}  \\
& \Delta & \mapsto & X_\Delta
\end{array}\]}

This application is a bijection.
\end{theo}

\begin{proof}
\cor{By lemma \ref{lem:arceq}, with an $(m+2)$-angulation, we associate a unique $m$-cluster-tilting object by taking the sum of the $m$-rigid indecomposable objects corresponding to each arc. Therefore the application is well-defined.}

\cor{We are going to show that any $m$-cluster-tilting object has a unique antecedent by $\Phi$. If we take $X$ an $m$-cluster-tilting object, we can associate a unique $(m+2)$-angulation. Indeed, $X= \bigoplus X_i$, where the $X_i$ are $m$-rigid indecomposable objects. With each summand $X_i$, we associate the corresponding arc $\alpha_i$ from Theorem \ref{th:ra}. We have:
\[\forall k \in \{1,\cdots,m \},\forall i,j \in \{1,\cdots,n+1 \}, {\mathrm{Ext}}_\mathcal{C}^k(X_i,X_j)=0.\]}

\cor{By Theorem \ref{th:cross}, we know that, for any $i,j \in \{1,\cdots,n+1 \}$ the $m$-diagonal $\alpha_i$ do not cross $\alpha_j$. There are $n+1$ such $m$-diagonals, so they form a maximal set of noncrossing $m$-diagonals, thus an $(m+2)$-angulation. It is uniquely defined. This achieves the proof.}

\end{proof}

\vspace{20pt}

We can summarize all the important properties between $m$-cluster-tilting objects, colored quivers, and $(m+2)$-angulations in the following diagram:

\[ \scalebox{1.2}{ \xymatrix {
& (m+2)\text{-angulation } \Delta \ar^[@]{\hbox to 0pt{\hss \text{\scriptsize{Theorem \ref{th:comp}}}\hss}}[ddr] \ar[ddl] & \\
& & \\
\text{Colored quiver } Q_\Delta \ar@{-}^[@]{\hbox to 0pt{\hss \text{\scriptsize{Theorem \ref{theo:corresp}}}\hss}}[uur] \ar@{-}^[@]{\hbox to 0pt{\hss \text{\scriptsize{Theorem \ref{th:mut}}}\hss}}[rr] & & m\text{-cluster-tilting object } X_\Delta \ar[uul] \ar[ll]
}} \]

We now finish this section with a direct consequence of this diagram.

\begin{theo}
\cor{Let $Q$ be a quiver of type $A$, $D$, $\tilde{A}$, or $\tilde{D}$. We consider the higher cluster category $\mathcal{C}_Q^m$. Let $P$ be a polygon, realizing the higher cluster category. Let $\Delta$ be an $(m+2)$-angulation. Let $Q_\Delta$ be the associated colored quiver. Let $X_\Delta$ be the $m$-cluster-tilting object associated with $\Delta$, and let $Q_{X_\Delta}$ be the quiver associated with $X_\Delta$ in the sense of Buan and Thomas in \cite{BT}. Then
\[ Q_\Delta=Q_{X_\Delta} \]}
\end{theo}

Note here that theorem \ref{theo:corresp} is a direct consequence of theorems \ref{th:comp} and \ref{th:mut}.

\bibliographystyle{alpha}
\bibliography{biblio}

\newcommand{\etalchar}[1]{$^{#1}$}
\def\ocirc#1{\ifmmode\setbox0=\hbox{$#1$}\dimen0=\ht0 \advance\dimen0
  by1pt\rlap{\hbox to\wd0{\hss\raise\dimen0
  \hbox{\hskip.2em$\scriptscriptstyle\circ$}\hss}}#1\else {\accent"17 #1}\fi}
  \def\ocirc#1{\ifmmode\setbox0=\hbox{$#1$}\dimen0=\ht0 \advance\dimen0
  by1pt\rlap{\hbox to\wd0{\hss\raise\dimen0
  \hbox{\hskip.2em$\scriptscriptstyle\circ$}\hss}}#1\else {\accent"17 #1}\fi}
  \def\ocirc#1{\ifmmode\setbox0=\hbox{$#1$}\dimen0=\ht0 \advance\dimen0
  by1pt\rlap{\hbox to\wd0{\hss\raise\dimen0
  \hbox{\hskip.2em$\scriptscriptstyle\circ$}\hss}}#1\else {\accent"17 #1}\fi}
\begin{thebibliography}{BMR{\etalchar{+}}06}

\bibitem[ASS06]{ASS}
Ibrahim Assem, Daniel Simson, and Andrzej Skowro{\'n}ski.
\newblock {\em Elements of the representation theory of associative algebras.
  {V}ol. 1}, volume~65 of {\em London Mathematical Society Student Texts}.
\newblock Cambridge University Press, Cambridge, 2006.
\newblock Techniques of representation theory.

\bibitem[BIRS09]{BIRS}
A.~B. Buan, O.~Iyama, I.~Reiten, and J.~Scott.
\newblock Cluster structures for 2-{C}alabi-{Y}au categories and unipotent
  groups.
\newblock {\em Compos. Math.}, 145(4):1035--1079, 2009.

\bibitem[BM07]{BM02}
Karin Baur and Robert~J. Marsh.
\newblock A geometric description of the {$m$}-cluster categories of type
  {$D_n$}.
\newblock {\em Int. Math. Res. Not. IMRN}, (4):Art. ID rnm011, 19, 2007.

\bibitem[BM08]{BM01}
Karin Baur and Robert~J. Marsh.
\newblock A geometric description of {$m$}-cluster categories.
\newblock {\em Trans. Amer. Math. Soc.}, 360(11):5789--5803, 2008.

\bibitem[BMR{\etalchar{+}}06]{BMRRT}
Aslak~Bakke Buan, Robert Marsh, Markus Reineke, Idun Reiten, and Gordana
  Todorov.
\newblock Tilting theory and cluster combinatorics.
\newblock {\em Adv. Math.}, 204(2):572--618, 2006.

\bibitem[BT09]{BT}
Aslak~Bakke Buan and Hugh Thomas.
\newblock Coloured quiver mutation for higher cluster categories.
\newblock {\em Adv. Math.}, 222(3):971--995, 2009.

\bibitem[BT15]{BauTor}
Karin Baur and Edmund~André Torkildsen.
\newblock A geometric realization of tame categories.
\newblock 2015.

\bibitem[CCS06]{CCS}
Philippe. Caldero, Frédéric. Chapoton, and Ralf. Schiffler.
\newblock Quivers with relations arising from clusters ({$A_n$} case).
\newblock {\em Trans. Amer. Math. Soc.}, 358(3):1347--1364, 2006.

\bibitem[DR74]{DR}
Vlastimil Dlab and Claus~Michael Ringel.
\newblock {\em Representations of graphs and algebras}.
\newblock Department of Mathematics, Carleton University, Ottawa, Ont., 1974.
\newblock Carleton Mathematical Lecture Notes, No. 8.

\bibitem[FR05]{FR}
Sergey Fomin and Nathan Reading.
\newblock Generalized cluster complexes and {C}oxeter combinatorics.
\newblock {\em Int. Math. Res. Not.}, (44):2709--2757, 2005.

\bibitem[FST08]{FST}
Sergey Fomin, Michael Shapiro, and Dylan Thurston.
\newblock Cluster algebras and triangulated surfaces. {I}. {C}luster complexes.
\newblock {\em Acta Math.}, 201(1):83--146, 2008.

\bibitem[FZ02]{FZ}
Sergey Fomin and Andrei Zelevinsky.
\newblock Cluster algebras. {I}. {F}oundations.
\newblock {\em J. Amer. Math. Soc.}, 15(2):497--529 (electronic), 2002.

\bibitem[IY08]{IY}
Osamu Iyama and Yuji Yoshino.
\newblock Mutation in triangulated categories and rigid {C}ohen-{M}acaulay
  modules.
\newblock {\em Invent. Math.}, 172(1):117--168, 2008.

\bibitem[JM]{JM}
Lucie Jacquet-Malo.
\newblock A geometric realization of the m-cluster categories of type
  $\tilde{D}_n$.
\newblock {\em To appear}.

\bibitem[Kel05]{Kel03}
Bernhard Keller.
\newblock On triangulated orbit categories.
\newblock {\em Doc. Math.}, 10:551--581, 2005.

\bibitem[Kel11]{Kel01}
B.~Keller.
\newblock Cluster algebras and cluster categories.
\newblock {\em Bull. Iranian Math. Soc.}, 37(2):187--234, 2011.

\bibitem[KR08]{KR}
Bernhard Keller and Idun Reiten.
\newblock Acyclic {C}alabi-{Y}au categories.
\newblock {\em Compos. Math.}, 144(5):1332--1348, 2008.
\newblock With an appendix by Michel Van den Bergh.

\bibitem[MP14]{MP}
Robert~J. Marsh and Yann Palu.
\newblock Coloured quivers for rigid objects and partial triangulations: the
  unpunctured case.
\newblock {\em Proc. Lond. Math. Soc. (3)}, 108(2):411--440, 2014.

\bibitem[Sch08]{Sch}
Ralf Schiffler.
\newblock A geometric model for cluster categories of type {$D_n$}.
\newblock {\em J. Algebraic Combin.}, 27(1):1--21, 2008.

\bibitem[Tho07]{Tho}
Hugh Thomas.
\newblock Defining an {$m$}-cluster category.
\newblock {\em J. Algebra}, 318(1):37--46, 2007.

\bibitem[Tor12a]{T}
Hedmund~André Torkildsen.
\newblock A geometric realization of the $m$ cluster category of type
  $\tilde{A}$.
\newblock 2012.

\bibitem[Tor12b]{Tor}
Hermund~André Torkildsen.
\newblock A geometric realization of the $m$-cluster category of type
  $\tilde{A}$, 2012.

\bibitem[Tza06]{Tz}
Eleni Tzanaki.
\newblock Polygon dissections and some generalizations of cluster complexes.
\newblock {\em J. Combin. Theory Ser. A}, 113(6):1189--1198, 2006.

\bibitem[Wr{\ocirc{a}}09]{W}
Anette Wr{\ocirc{a}}lsen.
\newblock Rigid objects in higher cluster categories.
\newblock {\em J. Algebra}, 321(2):532--547, 2009.

\bibitem[Zhu08]{Z}
Bin Zhu.
\newblock Generalized cluster complexes via quiver representations.
\newblock {\em J. Algebraic Combin.}, 27(1):35--54, 2008.

\bibitem[ZZ09]{ZZ}
Yu~Zhou and Bin Zhu.
\newblock Cluster combinatorics of {$d$}-cluster categories.
\newblock {\em J. Algebra}, 321(10):2898--2915, 2009.

\end{thebibliography}

\end{document}